\newcommand{\skipline}{\vspace{0.6cm}}
\numberwithin{equation}{section}
\newcommand{\R}{\mathbb{R}}
\newcommand{\Z}{\mathbb{Z}}
\theoremstyle{plain}
\newtheorem{definition}{Definition}
\newtheorem{theorem}{Theorem}
\newtheorem{lemma}{Lemma}
\newcommand{\dif}{\mathrm{d}}
\newcommand{\dtau}{\dif \tau}
\newcommand{\dt}{\dif t}
\newcommand{\ds}{\dif s}
\newcommand{\dx}{\dif x}
\newcommand{\dy}{\dif y}
\newcommand{\dxy}{\dif x \dif y}
\newcommand{\erf}{\mathrm{erf}}
\newcommand{\supp}{\mathrm{supp}}
\newcommand{\izo}{\int_{0}^{1}}
\newcommand{\izozo}{\izo \! \izo}
\newcommand{\linf}[1]{\left\| #1 \right\|_{\infty}}
\newcommand{\size}[2]{\left\| #1 \right\|_{#2}}
\newcommand{\ltwo}[1]{\left\| #1 \right\|_2}
\newcommand{\lu}{\left|u\right|_2}
\newcommand{\whu}{\left|U\right|_{H^{-1/4}}}
\title{An obstruction to small time local null controllability \\
for a viscous Burgers' equation}
\author{Frédéric Marbach
\footnote{Email: frederic.marbach@upmc.fr. Address: 
Laboratoire Jacques-Louis Lions, Université Pierre et Marie Curie, 
Institut Universitaire de France, 4, Place Jussieu, 75252 Paris Cedex, France.
Work supported by the ERC advanced grant 266907 (CPDENL) of the 
7th Research Framework Programme (FP7)}}
\begin{document}

\maketitle

\begin{abstract}
In this work, we are interested in the small time local null controllability for 
the viscous Burgers' equation $y_t - y_{xx} + y y_x = u(t)$ on 
the line segment $[0,1]$, with null boundary conditions. The second-hand side is
a scalar control playing a role similar to that of a pressure. In this setting, 
the classical Lie bracket necessary condition $[f_1,[f_1,f_0]]$ 
introduced by Sussmann fails to 
conclude. However, using a quadratic expansion of our system, we exhibit a 
second order obstruction to small time local null controllability. This 
obstruction holds although the 
information propagation speed is infinite for the Burgers equation. Our 
obstruction involves the weak $H^{-5/4}$ norm of the control $u$. The proof 
requires the careful derivation of an integral 
kernel operator and the estimation of residues by means of \emph{weakly singular 
integral operator} estimates.
\end{abstract}

\section{Introduction}

\subsection{Description of the system and our main result}

For $T > 0$ a small positive time, we consider the line segment 
$x \in [0,1]$ and the following one-dimensional viscous Burgers' controlled 
system:
\begin{equation} \label{system.burgers}
 \left\{ 
 \begin{aligned}
    y_t - y_{xx} + y y_x & = u(t)
    && \text{in } (0,T) \times (0,1), \\
    y(t,0) &= 0 && \text{in } (0,T), \\
    y(t,1) &= 0 && \text{in } (0,T), \\
    y(0,x) &= y_0(x) && \text{in } (0,1).
 \end{aligned}
 \right.
\end{equation}
The scalar control $u \in L^2(0,T)$ plays a role somewhat similar to that of a 
pressure for multi-dimensional fluid systems. Unlike some other studies, our 
control term $u$ depends only on time and not on the space variable. It
is supported on the whole segment $[0,1]$. For any initial data 
$y_0 \in H^1_0(0,1)$ and any fixed control $u \in L^2(0,T)$,
it can be shown (see Lemma~\ref{lemma.well-posedness} below)
that system~\eqref{system.burgers} has a unique solution in the 
space $X_T = L^2((0,T); H^2(0,1)) \cap H^1((0,T); L^2(0,1))$. We are interested in
the behavior of this system in the vicinity of the null equilibrium state.

\begin{definition} \label{def.stlc}
We say that system~\eqref{system.burgers} is \emph{small time locally null
controllable} if, for any small time $T > 0$, for any small size of the control
$\eta > 0$, there exists a region $\delta > 0$ such that:
\begin{equation}
 \forall | y_0 |_{H^1_0} \leq \delta, 
 \exists u \in L^2(0,T), 
 |u|_2 \leq \eta 
 \textrm{ such that } 
 y(T, \cdot) = 0,
\end{equation}
where $y\in X_T$ is the solution to system~\eqref{system.burgers} with initial 
condition $y_0$ and control $u$.
\end{definition}

\begin{theorem} \label{thm.frederic}
System~\eqref{system.burgers} is not \emph{small time locally null controllable}.
Indeed, there exist $T, \eta >0$ such that, for any $\delta > 0$, there exists 
$y_0 \in H^1_0(0,1)$ with $|y_0|_{H^1_0} \leq \delta$ such that, for any control 
$u \in L^2(0,T)$ with $|u|_2 \leq \eta$, the solution $y \in X_T$ 
to~\eqref{system.burgers} satisfies $y(T, \cdot) \neq 0$.
\end{theorem}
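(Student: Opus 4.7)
The plan is to expand the solution of~\eqref{system.burgers} at second order in $(u, y_0)$ around the null equilibrium and project onto a well-chosen backward-in-time test function, producing a scalar obstruction that cannot be satisfied for suitable small $y_0$ and $u$. First I would decompose $y = z + \alpha + \beta + r$, where $z$ is the Dirichlet heat propagation of $y_0$, $\alpha$ is the linear heat response to the scalar source $u(t)$ starting from zero data, $\beta$ is the quadratic correction solving $\beta_t - \beta_{xx} = -(z+\alpha)(z+\alpha)_x$ with null data, and $r$ is a cubic-and-higher remainder bounded via Lemma~\ref{lemma.well-posedness} and standard parabolic estimates by $|r|_{X_T} \leq C(|y_0|_{H^1_0}+|u|_2)^3$. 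The test function is $\varphi(t,x) = e^{-4\pi^2(T-t)}\phi_2(x)$, with $\phi_2(x)=\sqrt{2}\sin(2\pi x)$; it solves the backward heat equation with homogeneous Dirichlet boundary conditions, and because $\phi_2$ is an even Dirichlet mode, $\int_0^1 \varphi(t,x)\,\dx = 0$ for every $t \in [0,T]$, which is the orthogonality to the reachable set of the linearized control problem.

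Multiplying~\eqref{system.burgers} by $\varphi$, integrating by parts in $x$, integrating in $t \in [0,T]$, and using both $y(T,\cdot)=0$ and the orthogonality $\int\varphi\,\dx = 0$ (which kills the scalar control contribution), one obtains the key identity
\begin{equation*}
 -e^{-4\pi^2 T}\,\langle y_0,\phi_2\rangle \;=\; \frac{1}{2}\int_0^T\!\int_0^1 y(t,x)^2\,\varphi_x(t,x)\,\dx\,\dt.
\end{equation*}
Substituting the expansion into $y^2$, the right-hand side splits into the pure quadratic form $\frac{1}{2}\iint \alpha^2\varphi_x$, cross terms such as $\iint z\alpha\,\varphi_x$, the pure $y_0$ piece $\frac{1}{2}\iint z^2 \varphi_x$, and higher-order pieces involving $\beta$ and $r$. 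Using Duhamel's formula $\alpha(t,x) = \int_0^t u(s)\,[P(t-s)\mathbf{1}](x)\,\ds$ (with $P$ the Dirichlet heat semigroup), the leading quadratic term becomes a Volterra bilinear form $\iint_{[0,T]^2} K(s,s')\,u(s)\,u(s')\,\ds\,\dif s'$, with kernel $K$ built from the spatial contraction $\int_0^1 [P(\tau_1)\mathbf{1}](x)\,[P(\tau_2)\mathbf{1}](x)\,\cos(2\pi x)\,\dx$. The sign of this leading term is negative; at the lowest-mode level, $\iint\alpha^2\varphi_x \approx -2\sqrt{2}\pi\int_0^T e^{-4\pi^2(T-t)}\alpha_1(t)^2\,\dt$, since $\int_0^1 \sin^2(\pi x)\cos(2\pi x)\,\dx = -1/4$.

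The technical heart of the proof is a careful symbol analysis of $K$ showing that it defines a weakly singular integral operator and that the associated quadratic form is coercive in the weak fractional norm $H^{-5/4}$ of $u$ (equivalently, $H^{-1/4}$ of the primitive $U = \int_0^\cdot u$):
\begin{equation*}
 -\frac{1}{2}\int_0^T\!\int_0^1 \alpha^2 \,\varphi_x\,\dx\,\dt \;\geq\; c\,|u|_{H^{-5/4}}^2
\end{equation*}
for some $c>0$. The fractional exponent $5/4$ is dictated by the parabolic scaling of the kernel $[P(\tau)\mathbf{1}](x)$ near $\tau=0$ and the shape of $\varphi_x$. The remaining contributions are then controlled by the same weakly singular integral operator techniques, yielding bounds of the schematic form $C|y_0|_{H^1_0}^2 + C|y_0|_{H^1_0}\,|u|_{H^{-5/4}} + C(|y_0|_{H^1_0}+|u|_2)\,|u|_{H^{-5/4}}^2$. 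After Young's inequality on the cross term and taking $\eta$ small enough that $C(|y_0|_{H^1_0}+|u|_2)\leq c/2$ so as to absorb the cubic-in-$u$ contribution into the coercive lead, the identity yields
\begin{equation*}
 -e^{-4\pi^2 T}\,\langle y_0,\phi_2\rangle \;+\; \frac{c}{2}\,|u|_{H^{-5/4}}^2 \;\leq\; C\,|y_0|_{H^1_0}^2.
\end{equation*}

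Finally, I would take $y_0 = -\delta\,\phi_2/|\phi_2|_{H^1_0}$, so that $|y_0|_{H^1_0}=\delta$ and $-e^{-4\pi^2 T}\langle y_0,\phi_2\rangle = c''\,\delta$ for a fixed $c''>0$ depending only on $T$. The previous inequality becomes $c''\delta + \frac{c}{2}|u|_{H^{-5/4}}^2 \leq C\delta^2$, which is impossible for $\delta$ below a threshold depending only on $T$ and $\eta$. The conclusion $y(T,\cdot)\neq 0$ then follows for every small $\delta>0$: for $\delta$ above the threshold, it suffices to choose $y_0$ of strictly smaller norm along $-\phi_2$. The principal obstacle is the coercivity in $H^{-5/4}$: isolating the correct fractional exponent and proving the lower bound demands the explicit derivation and Fourier/Plancherel analysis of the Volterra kernel $K$. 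The remainder estimates are also delicate, as the weak norm $H^{-5/4}$ is not directly adapted to controlling $\beta$ in strong norms, which is precisely where the weakly singular integral operator machinery becomes indispensable.
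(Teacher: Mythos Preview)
Your overall architecture matches the paper's: second-order expansion, projection onto a profile orthogonal to the linear reachable set, coercivity of the resulting quadratic form in $H^{-5/4}$, and absorption of the cubic remainder. The choice of $\phi_2=\sqrt 2\sin(2\pi x)$ in place of the paper's polynomial $\rho$ is harmless in principle (both have $\rho,\rho_{xx}$ vanishing at the endpoints, and $\phi_2'(0)\neq 0$, which is what drives the leading kernel). The paper also separates the free evolution of $y_0$ from the controlled part $y^u$ and their coupling, rather than folding $z$ into the quadratic source for $\beta$; your variant is workable but makes the cross-term bookkeeping heavier.

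The genuine gap is the coercivity step, which you assert but do not argue. Your lowest-mode heuristic $\int_0^1\sin^2(\pi x)\cos(2\pi x)\,\dx=-\tfrac14$ is misleading: expanding $\alpha=\sum_{n\ \mathrm{odd}}\alpha_n\sqrt2\sin(n\pi x)$ gives $\int_0^1\alpha^2\cos(2\pi x)\,\dx=-\tfrac12\alpha_1^2+\sum_{n\ \mathrm{odd}}\alpha_n\alpha_{n+2}$, which is \emph{not} sign-definite, so no mode-by-mode argument yields the sign of $K$. The paper obtains coercivity by an entirely different route: it rescales $t\mapsto \varepsilon t$ so that small time becomes small viscosity, shows that in this limit the kernel $K^\varepsilon$ has an explicit boundary-layer asymptotic $\tfrac{\sqrt\varepsilon}{45\sqrt\pi}K^0$ with $K^0(s_1,s_2)=(2-s_1-s_2)^{3/2}-|s_1-s_2|^{3/2}$, proves coercivity of $K^0$ in $H^{-5/4}$ via a Riesz-potential/fractional-Laplacian identity after two integrations by parts, and then controls $K^\varepsilon-\tfrac{\sqrt\varepsilon}{45\sqrt\pi}K^0$ as an $O(\varepsilon^{3/2})$ weakly singular operator. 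Without this small-$T$ asymptotic you have no handle on the sign of $K$, and ``symbol analysis'' of the Volterra kernel at fixed $T$ is not a substitute: the kernel is not of convolution type and its positivity is the heart of the matter. You should introduce the scaling, isolate the explicit limit kernel, and prove its coercivity directly; everything else in your plan then goes through.
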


We will see in the sequel that our proof actually provides a stronger result.
Indeed, we prove that, for small times and small controls, whatever the small 
initial data $y_0$, the state $y(t)$ drifts towards  a fixed direction. Of 
course, this prevents small time local null controllability as a direct 
consequence.

\subsection{Motivation: small time obstructions despite infinite propagation speed}

As an example, let us consider the following transport control system:
\begin{equation} \label{system.transport}
 \left\{ 
 \begin{aligned}
    y_t + M y_{x} & = 0
    && \text{in } (0,T) \times (0,L), \\
    y(t,0) &= v_0(t) && \text{in } (0,T), \\
    y(0,x) &= y_0(x) && \text{in } (0,L),
 \end{aligned}
 \right.
\end{equation}
where $T > 0$ is the total time, $M > 0$ the propagation speed and $L > 0$ the 
length of the domain. The control is the boundary data $v_0$. No condition is
imposed at $x = 1$ since the characteristics flow out of the domain. For 
system~\eqref{system.transport}, small time local null controllability cannot 
hold. Indeed, even if the initial data $y_0$ is very small, the control is only
propagated towards the right at speed $M$. Thus, if $T < L/M$, controllablity
does not hold. Of course, if $T \geq L/M$, the characteristics method allows to 
construct an explicit control to reach any final state $y_1$ at time $T$.
In this context, the obstruction to controllability comes from the fact that the
information propagation speed is bounded. Indeed, let us modify slightly 
system~\eqref{system.transport} into:
\begin{equation} \label{system.transport.heat}
 \left\{ 
 \begin{aligned}
    y_t - \nu y_{xx} + M y_{x} & = 0
    && \text{in } (0,T) \times (0,L), \\
    y(t,0) &= v_0(t) && \text{in } (0,T), \\
    y(t,1) &= 0 && \text{in } (0,T), \\
    y(0,x) &= y_0(x) && \text{in } (0,L),
 \end{aligned}
 \right.
\end{equation}
where $\nu > 0$ is a (very small) viscosity. This system is small time 
globally null controllable, for any $\nu > 0$. Of course, the cost of controllablity
must explode as $\nu \rightarrow 0$ if $T$ is too small (see~\cite{MR2176274} 
for a precise study of the cost of controllability 
for~\eqref{system.transport.heat}). What we want to underline here, is that the 
infinite information propagation speed yields (at least in this context) small 
time local controllability.

\skipline

Therefore, there is a strong interest for systems where small time local
controllability does not hold \emph{despite} an infinite information propagation 
speed. 

An example of such a system is the control of a quantum particle in a 
moving potential well (box). This is a bilinear controllability problem for the 
Schrödinger equation. For such system, it can be shown that large time 
controllablity holds (see~\cite{MR2144647} if only the particle needs to be
controlled or~\cite{MR2200740} to control both the particle and the box).
For small times, negative results have been obtained by
Coron in~\cite{MR2193655} (when one tries to control both the particle and the 
position of the box), by Beauchard, Coron and Teissman in~\cite{MR3250374}
for large controls (but smooth potentials) and by Beauchard and Morancey 
in~\cite{MR3167929} (under an 
assumption corresponding to a Lie-bracket condition $[f_1,[f_1,f_0]] \neq 0$).
This last paper is related to ours since their proof relies on a coercivity
estimate involving the $H^{-1}$ norm of the control. This is natural as we will
see in paragraph~\ref{section.finite.dim}. We refer the reader to these papers
for more details and surveys on the controllability of Schrödinger equations.

Theorem~\ref{thm.frederic} can be seen as another example of a situation 
(in the context of fluid dynamics) where
small time local controllability fails despite an infinite propagation speed.

\subsection{Previous works concerning Burgers' controllability}

Let us recall known results concerning the controllability of the
viscous Burgers' equation. More generally, we introduce the following system:
\begin{equation} \label{system.burgers.full}
 \left\{ 
 \begin{aligned}
    y_t - y_{xx} + y y_x & = u(t)
    && \text{in } (0,T) \times (0,1), \\
    y(t,0) &= v_0(t) && \text{in } (0,T), \\
    y(t,1) &= v_1(t) && \text{in } (0,T), \\
    y(0,x) &= y_0(x) && \text{in } (0,1),
 \end{aligned}
 \right.
\end{equation}
where $v_0$ and $v_1$ are seen as additional controls with respect to the single
control $u$ of system~\eqref{system.burgers}. Various settings have been studied
(with either one or two boundary controls, with or without~$u$).
Once again, here $u$ only depends on $t$ and not on $x$. Some studies have been
carried out with $v_0 = v_1 = 0$ and a source term $u(t,x) \chi_{[a,b]}$ for 
$0 < a < b < 1$. However, these studies are equivalent to boundary controls 
thanks to the usual domain extension argument. Up to our knowledge, 
Theorem~\ref{thm.frederic} is the first result concerning the case without any 
boundary control and a scalar control $u$.

\skipline

\textbf{We start with results involving only a single boundary control (either
$v_0$ or $v_1$ by symmetry) and $u = 0$.}

In~\cite{MR1406566}, Fursikov and Imanuvilov prove small time 
local controllability in the vicinity of trajectories of 
system~\eqref{system.burgers.full}. Their proof relies on Carleman 
estimates for the parabolic problem obtained by seeing the non-linear term 
$yy_x$ as a small forcing term.

Global controllability towards steady states of 
system~\eqref{system.burgers.full} is possible in large time. Such studies have
been carried out by Fursikov and Imanuvilov in~\cite{MR1348646} for large
time global controllability towards all steady states, and by Coron in
\cite{MR2376661} for global null controllability in bounded time (ie. bounded 
independently on the initial data).

However, small time global controllability does not hold. The first obstruction
was obtained by Diaz in~\cite{MR1364638}. He gives a restriction for the set of 
attainable states starting from $0$. Indeed, they must lie under some limit 
state corresponding to an infinite boundary control $v_1 = + \infty$.

Fern\'andez-Cara and Guerrero derived an asymptotic of the minimal 
null-controllability time $T(r)$ for initial states of $H^1$ norm lower than $r$
(see~\cite{MR2311198}). This shows that the system is not small time globally 
null controllable.

\skipline

\textbf{We move on to two boundary controls $v_0$ and $v_1$, still with $u=0$.}
Guerrero and Imanuvilov prove in~\cite{MR2371111} that
neither small time null controllability nor bounded time global 
controllability hold in this context. Hence, controlling the whole boundary does
not provide better controllability properties.

\skipline

\textbf{When three scalar controls (namely $u(t)$, $v_0$ and $v_1$) are 
used}, Chapouly has shown in~\cite{MR2516179} that the system is small time 
globally exactly controllable to the trajectories. Her proof relies on the 
return method and on the fact that the corresponding inviscid Burgers' system 
is small time exactly controllable (see~\cite[Chapter 6]{MR2302744} for other 
examples of this method applied to Euler or Navier-Stokes).

\skipline

\textbf{When $v_1 = 0$, but $u$ and $v_0$ are controlled,} the author proved
in~\cite{MR3227326} that small time global null controllability holds. Indeed,
although a boundary layer appears near the uncontrolled part of the boundary
at $x = 1$, precise estimation of the creation and dissipation of the boundary
layer allows to conclude.

\skipline

\textbf{Concerning the controllability of the inviscid Burgers' equation}, some 
works have be carried out. In~\cite{MR1616586}, Ancona and Marson describe the 
set of attainable states in a pointwise way for the Burgers' equation on the 
half-line $x \geq 0$ with only one boundary control at $x = 0$. In~\cite{MR1612027}, 
Horsin describes the set of attainable states for a Burgers' equation on a line
segment with two boundary controls. Thorough studies are also carried out 
in~\cite{adimurthi} by Adimurthi et al. In~\cite{perrollaz}, Perrollaz studies the
controllability of the inviscid Burgers' equation in the context of entropy 
solutions with the additional control~$u(\cdot)$ and two boundary controls.

\subsection{A quadratic approximation for the non-linear system}
\label{subsection.scaling}

Starting now, we introduce $\varepsilon = T$ to remember that the total allowed
time for controllability is small. Moreover, we want to use the well-known 
scaling trading \emph{small time} with \emph{small viscosity} for viscous
fluid equations. Therefore, we introduce, for $t\in(0,1)$ and $x\in(0,1)$,
$\tilde{y}(t, x) = \varepsilon y(\varepsilon t, x)$. Hence, $\tilde{y}$ is the 
solution to:
\begin{equation} \label{system.burgers.eps}
 \left\{ 
 \begin{aligned}
    \tilde{y}_t - \varepsilon \tilde{y}_{xx} + \tilde{y} \tilde{y}_x 
    & = \tilde{u}(t) && \text{in } (0,1) \times (0,1), \\
    \tilde{y}(t,0) &= 0 && \text{in } (0,1), \\
    \tilde{y}(t,1) &= 0 && \text{in } (0,1), \\
    \tilde{y}(0,x) &= \tilde{y}_0(x) && \text{in } (0,1),
 \end{aligned}
 \right.
\end{equation}
where $\tilde{u}(t) = \varepsilon^2 u(\varepsilon t)$ and 
$\tilde{y}_0 = \varepsilon y_0$. This scaling is widely used for controllability
results since small viscosity developments are easier to handle. As we will
prove in section~\ref{section.back}, system~\eqref{system.burgers.eps} 
can help us to deduce results for system~\eqref{system.burgers}. To further 
simplify the computations in the following sections, let us drop the tilda signs 
and the initial data. Therefore, we will study the behavior of the following 
system near $y \equiv 0$:
\begin{equation} \label{system.y}
 \left\{ 
 \begin{aligned}
    y_t - \varepsilon y_{xx} + y y_x & = u(t)
    && \text{in } (0,1) \times (0,1), \\
    y(t,0) &= 0 && \text{in } (0,1), \\
    y(t,1) &= 0 && \text{in } (0,1), \\
    y(0,x) &= 0 && \text{in } (0,1).
 \end{aligned}
 \right.
\end{equation}
Properties proven on system~\eqref{system.y} will easily be translated into
properties for system~\eqref{system.burgers} in Section~\ref{section.back}. 
Moreover, since we are studying local null controllability, both the control 
$u$ and the state $y$ are small. Thus, if $\eta$ describes the size of the 
control as in Definition~\ref{def.stlc}, let us name our control $\eta u(t)$,
with $u$ of size $\mathcal{O}(1)$. We expand $y$ as 
$y = \eta a + \eta^2 b + \mathcal{O}(\eta^3)$, and we compute the associated 
systems:
\begin{equation} \label{system.a}
 \left\{ 
 \begin{aligned}
    a_t - \varepsilon a_{xx}  
    & = u(t) && \text{in } (0,1) \times (0,1), \\
    a(t,0) &= 0 && \text{in } (0,1), \\
    a(t,1) &= 0 && \text{in } (0,1), \\
    a(0,x) &= 0 && \text{in } (0,1)
 \end{aligned}
 \right.
\end{equation}
and
\begin{equation} \label{system.b}
 \left\{ 
 \begin{aligned}
    b_t - \varepsilon b_{xx}  
    & = -aa_x && \text{in } (0,1) \times (0,1), \\
    b(t,0) &= 0 && \text{in } (0,1), \\
    b(t,1) &= 0 && \text{in } (0,1), \\
    b(0,x) &= 0 && \text{in } (0,1).
 \end{aligned}
 \right.
\end{equation}
It is easy to see that system~\eqref{system.a} is not controllable. 
Indeed, the control $u(t)$ can actually be written as 
$u(t)\chi_{[0,1]}$, and $\chi_{[0,1]}$ is an even function
on the line segment $[0,1]$. Thus, the control only acts on even modes of $a$.
In the linearized system~\eqref{system.a}, all odd modes evolve freely.
This motivates the second order expansion of our Burgers' system in order to 
understand its controllability properties using $b$. Given 
systems~\eqref{system.a} and~\eqref{system.b}, we know that $a$ is even and 
$b$ is odd. 

\subsection{A finite dimensional counterpart}
\label{section.finite.dim}

Systems~\eqref{system.a} and~\eqref{system.b} exhibit an interesting structure.
Indeed, the first system is fully controllable (if we consider that $a$ lives
within the subspace of even functions), while the second system is indirectly
controlled through a quadratic form depending on $a$. Let us introduce the 
following finite dimensional control system:
\begin{equation} \label{system.ab}
 \left\{ 
 \begin{aligned}
 	\dot{a} & = M a + u(t) m && \text{in } (0,T), \\
 	\dot{b} & = L b + Q(a,a) && \text{in } (0,T),
 \end{aligned}
 \right.
\end{equation}
where the states $a(t), b(t) \in \R^n \times \R^p$, $M$ is an $n\times n$ 
matrix, $m$ is a fixed vector in $\R^n$ along which the scalar control acts,
$L$ is a $p \times p$ matrix and $Q$ is a quadratic function from 
$\R^n\times\R^n$ into $\R^p$. Moreover, we assume that the pair $(M,m)$
satisfies the classical Kalman rank condition 
(see~\cite[Theorem 1.16]{MR2302744}). Hence, the state $a$ is fully
controllable. We consider the small time null controllability problem 
for system~\eqref{system.ab}. We want to know, if, for any $T > 0$, for any 
initial state $(a^0,b^0)$, there exists a control $u : (0,T) \rightarrow \R$ 
such that the solution to~\eqref{system.ab} satisfies $a(T) = 0$ and $b(T) = 0$. 
As proved in~\cite{MR3110058} for the case $L = 0$, the answer to this question 
is always no in finite dimension, whatever $M, m, L$ and $Q$. 

System~\eqref{system.ab} is a particular case of the more general class of 
control affine systems. Indeed, if we let $x(t) = (a(t),b(t)) \in \R^{n+p}$,
we can write system~\eqref{system.ab} as:
\begin{equation} \label{eq.control.affine}
  \dot{x} = f_0(x) + u(t) f_1(x),
\end{equation}
where $f_0(x) = (Ma, Lb + Q(a,a))$ and $f_1(x) = (m, 0)$. The controllability
of systems like~\eqref{eq.control.affine} is deeply linked to the iterated Lie 
brackets of the vector fields $f_0$ and $f_1$ (see~\cite[Section 3.2]{MR2302744} 
for a review). 

\skipline

Let us give a few examples with $n = 3$. We write $a = (a_1,a_2,a_3)$ and
we consider the system:
\begin{equation} \label{a.123}
  \dot{a}_1 = a_2, \quad
  \dot{a}_2 = a_3, \quad
  \dot{a}_3 = u.
\end{equation}
Although the strong structure of equation~\eqref{a.123} can seem a little 
artificial, it is in fact the general case. Indeed, up to a translation of the
control, controllable systems can always be brought back to this canonical 
form introduced by Brunovsky in~\cite{MR0284247} (for a proof, 
see~\cite[Theorem 2.2.7]{MR2224013}). The resulting system is \emph{flat}. We
can express the full state as derivatives of a single scalar function. Indeed, 
if we let $\theta = a_1$, we have $a_2 = \theta'$, $a_3 = \theta''$ and 
$u = \theta'''$. If we choose an initial state $(a^0, b^0)$ with
$a^0 = 0$, we obtain $\theta(0) = \theta'(0) = \theta''(0) = 0$. Moreover, if 
we assume that the control $u$ drives the state $(a,b)$ to $(0,0)$ at time $T$,
we also have  $\theta(T) = \theta'(T) = \theta''(T) = 0$. These conditions allow
integration by parts without boundary terms.

\skipline

To keep the examples simple, we choose $p = 1$ (hence $b = b_1 \in \R$) and we 
let $L = 0$. 

\textbf{First example.} We consider the evolution $\dot{b} = a_2^2 + a_1 a_3$.
If the initial state is $(a^0,b^0)$ where $a^0 = 0$, we can compute 
$b(T) = b^0 + \int_0^T \theta'^2(t) + \theta(t)\theta''(t)\dt = b^0$. Hence, 
null controllability does not hold since any control driving $a$ from $0$
back to $0$ has no action on $b$. This obstruction to controllability is linked
to the fact that $\textrm{dim} \mathcal{L}(0) = 3$, where $\mathcal{L}$ is the
Lie algebra generated by $f_0$ and $f_1$. The system is locally constrained to 
evolve within a $3$ dimensional manifold of $\R^4$. Indeed, the evolution
equation can be rephrased as $\dot{b} = \frac{\dif}{\dt} (a_1a_2)$. Thus, the
quantity $b - a_1 a_2$ is a constant (\emph{conservation law} of the system).

\textbf{Second example.} We consider the evolution $\dot{b} = a_3^2$. Thus,
$b(T) = b^0 + \int_0^T \theta''(t)^2 \dt$. This is also an obstruction to null
controllability. Indeed, all choices of control will make $b$ increase.
In this setting, we recover the well known second order Lie bracket condition
discovered by Sussmann (see~\cite[Proposition 6.3]{MR710995}). Indeed, here,
$[f_1, [f_1, f_0]] = (0_{\R^3}, Q(m,m)) = (0_{\R^3}, 1)$. 
System~\eqref{eq.control.affine} drifts in the direction $[f_1,[f_1,f_0]]$ 
and the control cannot prevent it because this direction does not belong
to the set of the first order controllable directions $(m,0), (Mm,0)$ and 
$(M^2m,0)$ (Lie brackets of $f_0$ and $f_1$ involving $f_1$ once and only once).

\textbf{Third example.} We consider $\dot{b} = a_2^2$. Thus,
$b(T) = b^0 + \int_0^T \theta'^2(t) \dt$. Again, $b$ can only increase.
Here, the first \emph{bad} Lie bracket $[f_1,[f_1,f_0]]$ vanishes for $x = 0$.
However, we can check that $[f_1,[f_0,[f_0,[f_1,f_0]]]] = (0_{\R^3}, Q(Mm, Mm)) 
= (0_{\R^3}, 1)$. Compared with the second example, the increase of $b$ is
weaker. Indeed, in the second example, we had $b(T) = b^0 + |u|_{H^{-1}(0,T)}^2$.
In this third example, $b(T) = b^0 + |u|_{H^{-2}(0,T)}^2$.

\skipline

Although these examples may seem caricatural, they reflect the general case.
In finite dimension, systems like~\eqref{system.ab} are never small time 
controllable. Either because they evolve within a strict manifold, or because
some quantity depending on $b$ increases. Moreover, the amount by which $b$
increases is linked to the order of the first bad Lie bracket and can be
expressed as a weak norm depending on the control. One of the goals of our
work is thus also to investigate the situation in infinite dimension, where
Lie brackets are harder to define and compute.

Therefore, the first natural question is to compute the Lie bracket
$[f_1,[f_1,f_0]](0)$ for systems~\eqref{system.a} and~\eqref{system.b}. As we
have seen in finite dimension, this Lie bracket is $(0, Q(m,m))$. In our 
setting, $m$ is the even function $\chi_{[0,1]}$ and 
$Q(a,a) = -aa_x$. Thus $Q(m,m)$ is null. This can be proved computationally 
using Fourier series expansions. Let us give a much simpler argument inspired by
the formal fact that $\partial_x 1 = 0$. For any $a \in L^2(0,1)$ and any smooth 
test function $\phi$ such that $\phi(0) = \phi(1) = 0$, we have:
\begin{equation} \label{lie.q.11}
\int_0^1 Q(a,a) \phi = \frac{1}{2} \int_0^1 a^2(x) \phi_x(x) \dx.
\end{equation}
Hence, even if $q := Q(1,1)$ was defined in a very weak sense,~\eqref{lie.q.11}
yields:
\begin{equation} \label{lie.q.11.2}
 \langle q, \phi \rangle 
 = \frac{1}{2}\int_0^1 \phi_x 
 = \frac{1}{2}\phi(1) - \frac{1}{2}\phi(0) 
 = 0
\end{equation}
Since~\eqref{lie.q.11.2} is valid for any smooth $\phi$ null at the boundaries,
we conclude that indeed, $q = Q(1,1)$ is null. Therefore, the classical 
$[f_1,[f_1,f_0]]$ necessary condition by Sussmann does not provide
an obstruction to small time controllability for our system. This also explains 
why the coercivity property we are going to prove is in a weaker norm than $H^{-1}$.

\subsection{Strategy for the proof}

Most of this paper is dedicated to the asymptotic study of 
systems~\eqref{system.a} and~\eqref{system.b} as the viscosity $\varepsilon$ 
tends to zero. In Section~\ref{section.back}, we prove that this study is 
sufficient to conclude about the local null controllability for 
system~\eqref{system.burgers}. In order to prove that 
system~\eqref{system.burgers} is not small time locally
null controllable, we intend to exhibit a quantity depending on the state 
$y(t,\cdot)$ that cannot be controlled. For $\rho \in H^1(0,1)$, we will 
consider quantities of the form $\langle \rho , y(t, \cdot) \rangle$.

Looking at system~\eqref{system.b} when $\varepsilon$ is very small, we get the 
idea to consider $\rho(x) = x - \frac{1}{2}$. Indeed, we obtain:
\begin{equation} \label{b.rho.x}
 \frac{\dif}{\dt} \int_0^1 \rho(x) b(t,x) \dx
 = \frac{1}{2} \int_0^1 a^2(t,x) \dx
 + \frac{\varepsilon}{2} \left( b_x(t,1) - b_x(t,0) \right).
\end{equation}
Formally, if we let $\varepsilon = 0$ in equation~\eqref{b.rho.x}, it is very
encouraging because it shows that the quantity $\langle \rho, b \rangle$ can 
only increase, whatever is the choice of the control. Moreover, since we can 
compute the amount by which it increases, we have a kind of coercivity and we 
can hope to be able to use it to overwhelm both residues coming from the fact 
that $\varepsilon > 0$ and residues between the quadratic approximation and 
the full non-linear system. Sadly, the second term in the right-hand side of
equation~\eqref{b.rho.x} is hard to handle. However, as $a$ depends linearly
on $u$, and $b$ depends quadratically on $a$, we expect that we can find a 
kernel $K^\varepsilon(s_1,s_2)$ such that:
\begin{equation} \label{b.1.rho.k}
 \langle \rho, b(1, \cdot) \rangle
 = \int_0^1 \int_0^1 K^\varepsilon(s_1,s_2) u(s_1) u(s_2) \ds_1\ds_2.
\end{equation}
Thanks to equation~\eqref{b.rho.x}, we expect that~\eqref{b.1.rho.k} actually
defines a positive definite kernel acting on $u$, allowing us to use its 
coercivity to overwhelm various residues.

\skipline

In Section~\ref{section.technical}, we recall a set of technical well-posedness 
estimates for heat and Burgers systems.

In Section~\ref{section.burgers.k0}, we show that formula~\eqref{b.1.rho.k} 
holds and we give an explicit construction of the kernel~$K^\varepsilon$. 
Moreover, we compute formally its limit $K^0$ as $\varepsilon \rightarrow 0$.

In Section~\ref{section.k0.coercive}, we prove that the kernel $K^0$ is coercive 
with respect to the $H^{-5/4}(0,1)$ norm of the control~$u$, by recognizing a 
Riesz potential and a fractional laplacian.

In Section~\ref{section.keps.residues}, we use weakly singular integral operator
estimates to bound the residues between $K^\varepsilon$ and $K^0$ and thus 
deduce that $K^\varepsilon$ is also coercive, for $\varepsilon$ small enough.

In Section~\ref{section.back}, we use these results to go back to the 
controllability of Burgers.

In Appendix~\ref{appendix.wsio}, we give a short presentation of the theory of
weakly singular integral operators and a sketch of proof of the main estimation
lemma we use.

\section{Preliminary technical lemmas}
\label{section.technical}

In this section, we recall a few useful lemmas and estimates, mostly concerning
the heat equation and Burgers equation on a line segment. Throughout this 
section, $\nu$ is a positive viscosity and $T$ a positive time. To lighten the
computations, we will use the notation $\lesssim$ to denote inequalities that 
hold up to a numerical constant. We will not attempt to keep track of these 
numerical constants. We insist on the fact that these constants do not depend 
on any parameter (neither the time $T$, nor the viscosity $\nu$, the control 
$u$, or any other unknown).

\subsection{Properties of the space $X_T$}

We recall the definition given in the introduction and state without proof the 
following classical lemmas which can be proved using either interpolation theory
or Fourier transforms with respect to time and space.

\begin{definition} \label{def.xt}
We define the functional space:
\begin{equation} \label{def.X}
X_T = L^2\left((0,T), H^2(0,1)\right) \cap H^1\left((0,T),L^2(0,1)\right).
\end{equation}
We endow the space $X_T$ with the scaling invariant norm:
\begin{equation} \label{def.norm.xt}
  \left\| z \right\|_{X_T} := 
    T^{-1/2} \left\|z\right\|_2 
    + T^{-1/2} \left\|z_{xx}\right\|_2
    + T^{1/2} \left\|z_{t}\right\|_2.
\end{equation}
\end{definition}

\begin{lemma} \label{lemma.xt.c0}
$X_T \hookrightarrow \mathcal{C}^0([0,T], H^1(0,1))$.
Moreover, for any function $z \in X_T$,
\begin{equation} \label{eq.lemma.xt.c0.1}
  \sup_{t \in [0,T]} | z(t, \cdot) |_{H^1(0,1)}
  \lesssim
  \left\| z \right\|_{X_T}.
\end{equation}
In particular,
\begin{equation} \label{eq.lemma.xt.c0.2}
  \left\| z \right\|_\infty
  \lesssim 
  \left\| z \right\|_{X_T}.
\end{equation}
\end{lemma}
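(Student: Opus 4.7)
The plan is to combine the classical Lions--Magenes trace lemma with a pigeonhole-in-time argument, and then invoke the one-dimensional Sobolev embedding.

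First I would establish the abstract continuity $X_T \hookrightarrow \mathcal{C}^0([0,T]; H^1(0,1))$. For $z \in X_T$, both $z$ and $z_x$ lie in $L^2((0,T); H^1(0,1))$ (since $z \in L^2((0,T); H^2)$), while $z_t \in L^2((0,T); L^2)$ and therefore $z_{xt} \in L^2((0,T); H^{-1})$. The Lions--Magenes trace theorem then yields $z, z_x \in \mathcal{C}^0([0,T]; L^2)$ and the distributional identity
\begin{equation*}
\frac{\dif}{\dt} \left\|z_x(t,\cdot)\right\|_2^2
= 2 \langle z_{xt}(t,\cdot), z_x(t,\cdot) \rangle_{H^{-1}, H^1},
\end{equation*}
along with the analogous formula for $\|z(t,\cdot)\|_2^2$.

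For the quantitative bound, the idea is pigeonhole followed by the fundamental theorem of calculus. Since $\int_0^T \|z(\tau,\cdot)\|_{H^1}^2 \dtau \lesssim \|z\|_{L^2((0,T); H^2)}^2$, there exists $s_0 \in [0,T]$ with $\|z(s_0,\cdot)\|_{H^1}^2 \lesssim T^{-1}\|z\|_{L^2(H^2)}^2$. Integrating the identity above on $[s_0,t]$, using $\|z_{xt}\|_{H^{-1}} \leq \|z_t\|_2$ and applying Cauchy--Schwarz in time, I would obtain, for every $t \in [0,T]$,
\begin{equation*}
\left\|z(t,\cdot)\right\|_{H^1}^2
\lesssim T^{-1}\left\|z\right\|_{L^2(H^2)}^2 + \left\|z_t\right\|_{L^2(L^2)} \cdot \left\|z\right\|_{L^2(H^2)}.
\end{equation*}
The only point requiring care is rechecking that the right-hand side is bounded by $\|z\|_{X_T}^2$: the first summand matches the $T^{-1/2}\|z_{xx}\|_2$ and $T^{-1/2}\|z\|_2$ pieces of the norm, while the second factors as $(T^{1/2}\|z_t\|_2)\cdot(T^{-1/2}\|z\|_{L^2(H^2)})$, both controlled by $\|z\|_{X_T}^2$. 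Taking the supremum over $t$ then gives~\eqref{eq.lemma.xt.c0.1}.

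The final $L^\infty$ bound~\eqref{eq.lemma.xt.c0.2} is immediate from the one-dimensional Sobolev embedding $H^1(0,1) \hookrightarrow L^\infty(0,1)$: for each $t$ we have $\|z(t,\cdot)\|_\infty \lesssim \|z(t,\cdot)\|_{H^1} \lesssim \|z\|_{X_T}$, and taking the supremum in $t$ concludes. There is no real obstacle in this lemma; the only care to be taken is the bookkeeping of the various powers of $T$ to ensure that the final bound is genuinely scaling invariant, which is precisely what the weighted definition of $\|\cdot\|_{X_T}$ is designed to make effortless.
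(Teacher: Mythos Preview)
Your proof is correct. The paper actually states this lemma \emph{without proof}, noting only that it ``can be proved using either interpolation theory or Fourier transforms with respect to time and space.'' Your route via the Lions--Magenes trace identity together with a pigeonhole selection of a good reference instant $s_0$ is a perfectly valid and more hands-on alternative: it is essentially the elementary argument underlying the abstract interpolation embedding $L^2((0,T);H^2)\cap H^1((0,T);L^2)\hookrightarrow \mathcal{C}^0([0,T];[H^2,L^2]_{1/2})=\mathcal{C}^0([0,T];H^1)$ that the paper alludes to. The Fourier approach would instead extend $z$ in time, pass to frequency variables in $(t,x)$, and read the bound off Plancherel; your argument has the advantage of tracking the $T$-dependence transparently, which is precisely what the scaling-invariant weights in $\|\cdot\|_{X_T}$ are designed for.
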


\begin{lemma} \label{lemma.xt.zx}
For any $z \in X_T$, the boundary traces of $z_x$ satisfy:
\begin{equation} \label{eq.lemma.xt.zx.2}
  T^{-1/4} \left| z_x(\cdot, 0) \right|_{H^{1/4}(0,T)}
  + T^{-1/4} \left| z_x(\cdot, 1) \right|_{H^{1/4}(0,T)}
  \lesssim \left\| z \right\|_{X_T}.
\end{equation}
\end{lemma}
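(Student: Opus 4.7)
My plan is a standard anisotropic Sobolev trace estimate, preceded by a scaling argument. First I would reduce to the case $T=1$ by setting $w(s,x) := z(Ts,x)$: the three weights $T^{\pm 1/2}$ in~\eqref{def.norm.xt} have been precisely chosen so that $\|w\|_{X_1} = \|z\|_{X_T}$, and a change of variable in the Slobodeckij seminorm $\iint |f(t_1)-f(t_2)|^2/|t_1-t_2|^{3/2}\,\dif t_1\,\dif t_2$ yields $|w_x(\cdot,0)|_{H^{1/4}(0,1)} = T^{-1/4}|z_x(\cdot,0)|_{H^{1/4}(0,T)}$, matching the prefactor in~\eqref{eq.lemma.xt.zx.2} exactly.

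For $T=1$, I would extend $w$ to some $W$ on $\R_t \times \R_x$ with $\|W\|_{L^2(\R;H^2)\cap H^1(\R;L^2)} \lesssim \|w\|_{X_1}$ via standard anisotropic Hestenes-type reflections, and reduce by translation invariance to controlling the trace at $x=0$. In Fourier variables $(\tau,\xi)$, the squared norm of $W$ is equivalent to $\int_{\R^2}(1+\tau^2+\xi^4)|\hat W(\tau,\xi)|^2 \,\dtau\,d\xi$, while the partial Fourier transform in $t$ of $W_x(\cdot,0)$ is proportional to $\int_\R \xi\,\hat W(\tau,\xi)\,d\xi$. A weighted Cauchy--Schwarz in $\xi$, combined with the elementary identity $\int_\R \xi^2/(1+\tau^2+\xi^4)\,d\xi = C(1+\tau^2)^{-1/4}$ (obtained by the rescaling $\xi = (1+\tau^2)^{1/4}\eta$), produces
\begin{equation*}
  (1+\tau^2)^{1/4}\Bigl|\int_\R \xi\,\hat W(\tau,\xi)\,d\xi\Bigr|^2 \lesssim \int_\R (1+\tau^2+\xi^4)|\hat W(\tau,\xi)|^2\,d\xi.
\end{equation*}
Integrating in $\tau$ and applying Plancherel then yields $|W_x(\cdot,0)|_{H^{1/4}(\R)}^2 \lesssim \|W\|_{L^2(H^2)\cap H^1(L^2)}^2 \lesssim \|w\|_{X_1}^2$, which is the desired inequality.

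The main (and essentially only) obstacle is the construction of the anisotropic extension $W$ preserving both the $L^2(H^2)$ and the $H^1(L^2)$ parts of the norm simultaneously, since the two factors involve different orders of differentiation in $t$ and $x$. This is a well-known subtlety resolved by successive reflections with carefully weighted coefficients, or can be bypassed entirely by expanding $w$ in the Fourier sine basis $\{\sin(k\pi x)\}_{k\geq 1}$ in $x$ and applying the same weighted Cauchy--Schwarz trick mode by mode, which is the route followed in Lions--Magenes.
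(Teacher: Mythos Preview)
Your approach is correct and in fact more detailed than what the paper provides: the paper states this lemma \emph{without proof}, merely remarking that it ``can be proved using either interpolation theory or Fourier transforms with respect to time and space.'' Your argument is precisely the Fourier-transform route the paper alludes to, and the scaling reduction to $T=1$ is the natural way to obtain the prefactor $T^{-1/4}$.

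Two small remarks. First, your equality $|w_x(\cdot,0)|_{H^{1/4}(0,1)} = T^{-1/4}|z_x(\cdot,0)|_{H^{1/4}(0,T)}$ holds for the Slobodeckij \emph{seminorm}; the $L^2$ part of the full $H^{1/4}$ norm scales as $T^{-1/2}$ rather than $T^{-1/4}$, so after scaling one picks up $T^{1/4}\|w_x(\cdot,0)\|_{L^2(0,1)}$, which is harmless since $T$ is bounded. Second, your closing suggestion to bypass the extension by expanding in the sine basis $\{\sin(k\pi x)\}$ is not quite safe here: $X_T$ carries no Dirichlet condition, so for a generic $z(t,\cdot)\in H^2(0,1)$ the sine coefficients do not control $\|z_{xx}\|_{L^2}$ (take $z\equiv 1$). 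The Hestenes-reflection extension you describe first is the sound route; since the reflection coefficients are constants independent of $t$, the extension in $x$ commutes with $\partial_t$ and therefore preserves both the $L^2(H^2)$ and $H^1(L^2)$ norms simultaneously, resolving the obstacle you flag.
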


\subsection{Smooth setting for the heat equation}

We start by recalling standard estimates in a smooth (strong) setting for one 
dimensional heat equations that will be useful in the sequel. We state all 
results for standard forward heat equations, but the same results hold for 
backwards heat equations with final time conditions.

\begin{lemma} \label{lemma.heat.f}
Let $f \in L^2((0,T)\times(0,1))$ and $z^0 \in H^1_0(0,1)$. We consider
the system:
\begin{equation} \label{system.heat.f}
 \left\{ 
 \begin{aligned}
    z_t - \nu z_{xx} & = f && \text{in } (0,T) \times (0,1), \\
    z(t,0) &= 0 && \text{in } (0,T), \\
    z(t,1) &= 0 && \text{in } (0,T), \\
    z(0,x) &= z^0(x) && \text{in } (0,1).
 \end{aligned}
 \right.
\end{equation}
There is a unique solution $z \in X_T$ to system~\eqref{system.heat.f}. 
Moreover, it satisfies the estimate:
\begin{equation} \label{estimate.lemma.heat.f}
 \nu \left\| z_{xx} \right\|_2
 + \sqrt{\nu} \left\| z_{x} \right\|_2
 + \left\| z_{t} \right\|_2
 \lesssim \left\| f \right\|_2 + \sqrt{\nu} | z^0_x |_2. 
\end{equation}
\end{lemma}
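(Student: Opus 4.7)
This is a classical maximal regularity estimate for the one-dimensional Dirichlet heat equation in which the viscosity $\nu$ must be tracked with care. My plan is the standard Galerkin construction together with a well-chosen multiplier for the energy estimate.

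First, I would construct the solution by projecting onto the Dirichlet eigenbasis $e_k(x) = \sqrt{2}\sin(k\pi x)$ of $-\partial_{xx}$ on $(0,1)$. Writing $z^0 = \sum_k z^0_k e_k$ and $f(t,x) = \sum_k f_k(t) e_k(x)$, the Galerkin approximations $z_N(t,x) = \sum_{k=1}^{N} c_k(t) e_k(x)$ solve the decoupled ODEs
\begin{equation*}
 c_k'(t) + \nu (k\pi)^2\, c_k(t) = f_k(t), \qquad c_k(0) = z^0_k,
\end{equation*}
whose explicit Duhamel formula places $z_N$ in $X_T$. The uniform a priori bound established below then lets me pass to the limit $N \to \infty$ in $X_T$; uniqueness is immediate from the same bound applied to the difference of two solutions with $f = 0$ and $z^0 = 0$.

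The heart of the proof is the choice of $z_t$ as multiplier. Since $z(t,0) = z(t,1) = 0$ forces $z_t(t,0) = z_t(t,1) = 0$, integration by parts in $x$ produces no boundary term and gives the pointwise-in-time identity
\begin{equation*}
 \ltwo{z_t(t,\cdot)}^2 + \frac{\nu}{2}\, \frac{\dif}{\dt} \ltwo{z_x(t,\cdot)}^2 = \int_0^1 f(t,x)\, z_t(t,x) \dx.
\end{equation*}
Integrating over $(0,t)$ for any $t \in [0,T]$ and applying Cauchy--Schwarz and Young on the right-hand side to absorb $\tfrac{1}{2}\ltwo{z_t}^2$ yields simultaneously the control $\ltwo{z_t} \lesssim \ltwo{f} + \sqrt{\nu}\, |z^0_x|_2$ and a uniform-in-time bound on $\sqrt{\nu}\, \ltwo{z_x(t,\cdot)}$.

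The $\nu\,\ltwo{z_{xx}}$ bound then follows immediately by reading the PDE as $\nu z_{xx} = z_t - f$ and invoking the triangle inequality, while the $\sqrt{\nu}\, \ltwo{z_x}$ bound comes either directly from the uniform-in-time bound just derived, or from the Dirichlet identity $\ltwo{z_x(t,\cdot)}^2 = -\int_0^1 z\, z_{xx} \dx$ combined with Poincaré's inequality. I expect no fundamental difficulty: the only subtlety is tracking $\nu$ through Young's inequality so that no parasitic $\nu^{-1/2}$ factor ever appears in front of $\ltwo{f}$, which is precisely why the $z_t$ multiplier is chosen over $z$ or $-z_{xx}$.
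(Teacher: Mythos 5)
Your proof is correct and is essentially the same as the paper's: multiplying by $z_t$ and multiplying by $z_{xx}$ (which the paper does) yield algebraically identical energy identities once one uses $z_t = f + \nu z_{xx}$, and you recover the missing quantity afterwards by the same triangle-inequality step, just in the opposite direction. The only addition is that you spell out a Galerkin/eigenfunction construction for existence and uniqueness, which the paper declares standard and leaves out.
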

\begin{proof}
The proof of the existence and uniqueness is standard. Let us recall how we can
obtain estimate~\eqref{estimate.lemma.heat.f}. We multiply 
equation~\eqref{system.heat.f} by $z_{xx}$ and integrate by parts over 
$x \in (0,1)$. Thus,
\begin{equation} \label{eq.lemma.heat.f.1}
  \frac{\dif}{\dt} \left[ \frac{1}{2} \int_0^1 z_x^2 \right]
  + \nu \int_0^1 z_{xx}^2 = - \int_0^1 f z_{xx}.
\end{equation}
For any $T' < T$, we can integrate~\eqref{eq.lemma.heat.f.1} over 
$t \in (0,T')$. Hence, we obtain:
\begin{equation} \label{eq.lemma.heat.f.2}
  \frac{1}{2} | z_x(T') |_2^2 + \nu \int_0^{T'} \int_0^1 z_{xx}^2 
  = - \int_0^{T'}\int_0^1 f z_{xx} + \frac{1}{2} \left|z^0_x\right|_2^2.
\end{equation}
From~\eqref{eq.lemma.heat.f.2}, we easily deduce that:
\begin{align}
  \nu \left\| z_{xx} \right\|_2 
  & \lesssim \left\| f \right\|_{L^2} + \sqrt{\nu} | z^0_x |_2,
  \label{eq.lemma.heat.f.3} \\
  \sqrt{\nu} \left\| z_{x} \right\|_{L^\infty(L^2)} 
  & \lesssim \left\| f \right\|_{L^2} + \sqrt{\nu} | z^0_x |_2
  \label{eq.lemma.heat.f.4}.
\end{align}
Eventually, we obtain estimate~\eqref{estimate.lemma.heat.f}
from estimates~\eqref{eq.lemma.heat.f.3} and~\eqref{eq.lemma.heat.f.4}
since we can write $z_t$ as $f + \nu z_{xx}$.
\end{proof}

\begin{lemma} \label{lemma.max}
Let $z^0 \in H^1_0(0,1)$ and consider $z \in X_T$ the solution to 
system~\eqref{system.heat.f} with a null forcing term ($f = 0$). It satisfies:
\begin{equation} \label{eq.lemma.max}
 \left\| z \right\|_\infty \leq \left| z^0 \right|_\infty.
\end{equation}
\end{lemma}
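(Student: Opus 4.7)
The plan is to prove both $z(t,x) \leq |z^0|_\infty$ and $-z(t,x) \leq |z^0|_\infty$ by a standard Stampacchia-type truncation argument in the energy space. Set $M := |z^0|_\infty \geq 0$ and define the non-negative truncation $w(t,x) := (z(t,x) - M)_+$. By Lemma~\ref{lemma.xt.c0}, $z$ belongs to $\mathcal{C}^0([0,T], H^1(0,1))$ and is continuous on $[0,T]\times[0,1]$, so $w$ is well defined. Since $z(t,0) = z(t,1) = 0$ and $M \geq 0$, the function $w$ vanishes at the endpoints, hence $w(t,\cdot) \in H^1_0(0,1)$ for every $t$. Moreover $w(0,\cdot) \equiv 0$ by definition of $M$.

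Next I would multiply the heat equation $z_t - \nu z_{xx} = 0$ by $w$ and integrate in $x$ on $(0,1)$. Since $z \in H^1((0,T); L^2) \cap L^2((0,T); H^2 \cap H^1_0)$, the standard chain rule for truncations gives $\int_0^1 z_t w \dx = \tfrac{1}{2}\tfrac{\dif}{\dt}\int_0^1 w^2 \dx$, while integration by parts in $x$ produces no boundary term (because $w$ vanishes at both endpoints) and yields $-\nu\int_0^1 z_{xx} w \dx = \nu \int_0^1 w_x^2 \dx \geq 0$. Combining these identities,
\begin{equation*}
\frac{1}{2}\frac{\dif}{\dt}\int_0^1 w(t,x)^2 \dx + \nu \int_0^1 w_x(t,x)^2 \dx = 0,
\end{equation*}
so $t \mapsto \int_0^1 w^2 \dx$ is non-increasing. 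Starting from $w(0,\cdot) \equiv 0$, we conclude that $w \equiv 0$, i.e.\ $z(t,x) \leq M$ pointwise.

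Applying the same argument to $-z$, which solves the same system with initial datum $-z^0$ satisfying $|{-z^0}|_\infty = M$, gives $-z \leq M$, hence the desired bound $\linf{z} \leq |z^0|_\infty$. The only delicate point is the weak chain rule used to move the time derivative inside the truncation; this is classical given the regularity $X_T$, and boils down to a density/regularisation argument that I would invoke as standard rather than reprove. One could alternatively argue by the classical parabolic maximum principle (applied to the perturbation $z - \delta t$ on the compact rectangle $[0,T] \times [0,1]$, then letting $\delta \to 0$), but the energy/truncation version fits more naturally with the functional framework $X_T$ used throughout the paper.
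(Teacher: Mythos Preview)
Your proof is correct and is essentially a detailed version of what the paper does: the paper's proof simply invokes the standard parabolic maximum principle in this strong setting without giving details, and your Stampacchia truncation argument is one of the classical ways to justify that invocation in the $X_T$ framework. There is no substantive difference in approach.
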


\begin{proof}
Although~\eqref{eq.lemma.max} is not a direct consequence of the combination
of~\eqref{eq.lemma.xt.c0.2} and~\eqref{estimate.lemma.heat.f} (which would yield
a weaker conclusion), it can be obtained via a standard application of the 
maximum principle, which can be applied in this strong setting.
\end{proof}

\subsection{Weaker settings for the heat equation}
\label{section.heat.weak}

Let us move on to weaker settings for the heat equation. Moreover, we introduce
inhomogeneous boundary data as we will need them in the sequel.

\begin{definition} \label{definition.weak}
Let $f \in (X_T)'$, $v_0,v_1 \in H^{-1/4}(0,T)$ and 
$z^0 \in H^{-1}(0,1)$. We consider:
\begin{equation} \label{system.heat.weak}
 \left\{ 
 \begin{aligned}
    z_t - \nu z_{xx} & = f && \text{in } (0,T) \times (0,1), \\
    z(t,0) &= v_0(t) && \text{in } (0,T), \\
    z(t,1) &= v_1(t) && \text{in } (0,T), \\
    z(0,x) &= z^0(x) && \text{in } (0,1).
 \end{aligned}
 \right.
\end{equation}
We say that $z \in L^2((0,T)\times(0,1))$ is a weak solution to 
system~\eqref{system.heat.weak} if, for all $g \in L^2((0,T)\times(0,1))$,
\begin{equation} \label{heat.transposition}
 \begin{split}
  \langle z, g \rangle_{L^2,L^2} = 
  \langle f, \varphi \rangle_{(X_T)',X_T}
  &  + \langle z^0, \varphi(0, \cdot) \rangle_{H^{-1}(0,1),H^1_0(0,1)} \\
  & + \nu \langle v_0, \varphi_x(\cdot, 0) \rangle_{H^{-1/4}(0,T),H^{1/4}(0,T)} \\
  &  - \nu \langle v_1, \varphi_x(\cdot, 1) \rangle_{H^{-1/4}(0,T),H^{1/4}(0,T)},
 \end{split}
\end{equation}
where $\varphi \in X_T$ is the solution to the dual system:
\begin{equation} \label{system.heat.dual.phi}
 \left\{ 
 \begin{aligned}
    \varphi_t + \nu \varphi_{xx} & = - g && \text{in } (0,T) \times (0,1), \\
    \varphi(t,0) &= 0 && \text{in } (0,T), \\
    \varphi(t,1) &= 0 && \text{in } (0,T), \\
    \varphi(T,x) &= 0 && \text{in } (0,1).
 \end{aligned}
 \right.
\end{equation}
\end{definition}

\begin{lemma} \label{lemma.heat.weak}
There exists a unique weak solution $z \in L^2((0,T)\times(0,1))$ to 
system~\eqref{system.heat.weak}. Moreover:
\begin{equation} \label{estimate.weak.heat}
 \left\| z \right\|_2 
 \lesssim 
 T^{-1/2}\nu^{-1} \left( \left\| f \right\|_{(X_T)'} + | z^0 |_{H^{-1}} \right)
 + T^{-1/4} \left( |v_0|_{H^{-1/4}} + |v_1|_{H^{-1/4}} \right).
\end{equation}
\end{lemma}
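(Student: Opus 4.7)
The plan is to construct $z$ by the transposition method: produce it as the Riesz representative of the linear functional on $L^2$ defined by the right-hand side of~\eqref{heat.transposition}.

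First, for any $g \in L^2((0,T)\times(0,1))$, I would solve the dual backwards heat problem~\eqref{system.heat.dual.phi}. Lemma~\ref{lemma.heat.f} (read with time reversed, zero final data, and source $-g$) yields
$$\nu\|\varphi_{xx}\|_2 + \sqrt{\nu}\|\varphi_x\|_2 + \|\varphi_t\|_2 \lesssim \|g\|_2.$$
Since $\varphi(T,\cdot) = 0$, integrating $\varphi_t$ backwards in time and using Poincaré's inequality controls $\|\varphi\|_2$ as well. Repackaging everything into the scaling-invariant norm of Definition~\ref{def.xt} gives, in the regime of small $\nu$ relevant to this paper, a bound of the form $\|\varphi\|_{X_T} \lesssim T^{-1/2}\nu^{-1}\|g\|_2$.

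Next, I would define the linear form $\Lambda(g)$ as the right-hand side of~\eqref{heat.transposition} and bound each of its four pieces. The bulk term $\langle f,\varphi\rangle_{(X_T)',X_T}$ is dominated by $\|f\|_{(X_T)'}\|\varphi\|_{X_T}$ directly. The initial-data term uses Lemma~\ref{lemma.xt.c0} to get $|\varphi(0,\cdot)|_{H^1_0} \lesssim \|\varphi\|_{X_T}$, hence $|\langle z^0,\varphi(0,\cdot)\rangle| \lesssim |z^0|_{H^{-1}}\|\varphi\|_{X_T}$. For the two boundary-trace terms, Lemma~\ref{lemma.xt.zx} yields $|\varphi_x(\cdot,i)|_{H^{1/4}(0,T)} \lesssim T^{1/4}\|\varphi\|_{X_T}$ for $i=0,1$; combining the prefactor $\nu$ with $T^{1/4}$ and the bound on $\|\varphi\|_{X_T}$ produces exactly $\nu|\langle v_i,\varphi_x(\cdot,i)\rangle| \lesssim T^{-1/4}|v_i|_{H^{-1/4}}\|g\|_2$. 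Summing yields
$$|\Lambda(g)| \lesssim \Big[T^{-1/2}\nu^{-1}\big(\|f\|_{(X_T)'}+|z^0|_{H^{-1}}\big) + T^{-1/4}\big(|v_0|_{H^{-1/4}}+|v_1|_{H^{-1/4}}\big)\Big]\|g\|_2,$$
so $\Lambda$ is continuous on $L^2((0,T)\times(0,1))$.

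Finally, the Riesz representation theorem produces a unique $z \in L^2((0,T)\times(0,1))$ with $\Lambda(g) = \langle z, g\rangle_{L^2,L^2}$ for every $g$, which is exactly the weak formulation~\eqref{heat.transposition}; the operator norm of $\Lambda$ coincides with $\|z\|_2$ and yields~\eqref{estimate.weak.heat}. Uniqueness comes for free, since if $z_1, z_2$ are both weak solutions their difference has zero pairing against every $g \in L^2$. The only step that requires genuine care is the $\|\varphi\|_{X_T}$ estimate on the dual profile: tracking the powers of $T$ and $\nu$ through Lemma~\ref{lemma.heat.f}, Poincaré's inequality, and the scaling-invariant norm is what forces the specific coefficients $T^{-1/2}\nu^{-1}$ and $T^{-1/4}$ appearing in~\eqref{estimate.weak.heat}.
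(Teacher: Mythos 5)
Your proposal is correct and takes essentially the same approach as the paper: the transposition method, solving the dual backwards heat problem with Lemma~\ref{lemma.heat.f}, bounding the four pieces of the right-hand side of~\eqref{heat.transposition} with Lemmas~\ref{lemma.xt.c0} and~\ref{lemma.xt.zx}, and concluding by Riesz representation. You merely spell out the term-by-term bookkeeping that the paper leaves implicit.
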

\begin{proof}
For any $g \in L^2((0,T)\times(0,1))$, Lemma~\ref{lemma.heat.f} asserts that 
system~\eqref{system.heat.dual.phi} admits a unique solution $\varphi \in~X_T$ 
such that $\|\varphi\|_{X_T} \lesssim T^{-1/2}\nu^{-1} \| g \|_{L^2}$. 
Moreover, thanks to estimates~\eqref{eq.lemma.xt.c0.1}
and~\eqref{eq.lemma.xt.zx.2}, the right-hand
side of equation~\eqref{heat.transposition} defines a continuous linear form
on $L^2$. The Riesz representation theorem therefore proves the existence of a 
unique $z \in L^2$ satisfying estimate~\eqref{estimate.weak.heat}.
\end{proof}

\begin{lemma} \label{lemma.heat.fx}
Let $f \in L^2((0,T)\times(0,1))$. We consider the following heat system:
\begin{equation} \label{system.heat.fx}
 \left\{ 
 \begin{aligned}
    z_t - \nu z_{xx} & = f_x && \text{in } (0,1) \times (0,1), \\
    z(t,0) &= 0 && \text{in } (0,1), \\
    z(t,1) &= 0 && \text{in } (0,1), \\
    z(0,x) &= 0 && \text{in } (0,1).
 \end{aligned}
 \right.
\end{equation}
There is a unique solution $z \in L^2((0,T)\times(0,1))$ to 
system~\eqref{system.heat.fx}. Moreover, it satisfies the estimate:
\begin{equation} \label{estimate.lemma.heat.fx}
 \nu^{1/2} \left\| z \right\|_{L^\infty(L^2)} 
 + \nu \left\| z_x \right\|_{L^2} 
 \lesssim
 \left\| f \right\|_{L^2}.
\end{equation}
\end{lemma}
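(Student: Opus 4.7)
The plan is to establish existence and uniqueness via the weak framework of Lemma~\ref{lemma.heat.weak} (or by smooth approximation), and then prove the sharper estimate~\eqref{estimate.lemma.heat.fx} through a direct energy argument, gaining one derivative on the right-hand side by integrating by parts against $z$ itself.

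First I would handle existence. The distribution $f_x$ defines an element of $(X_T)'$ through the pairing $\langle f_x, \varphi\rangle := -\iint f \varphi_x$, which is well-defined because any $\varphi \in X_T$ satisfies $\varphi_x \in L^2$ by Lemma~\ref{lemma.xt.c0}. Thus Lemma~\ref{lemma.heat.weak} provides a unique weak $L^2$ solution $z$ to~\eqref{system.heat.fx}. Uniqueness in $L^2$ is an immediate by-product, so the remaining work is the quantitative bound.

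For the estimate, I would argue by smooth approximation: take $f^n \in \mathcal{C}^\infty_c((0,T)\times(0,1))$ with $f^n \to f$ in $L^2$, so that the solutions $z^n$ associated to $f^n_x$ are smooth (here Lemma~\ref{lemma.heat.f} applies since $f^n_x \in L^2$ and $z^n(0,\cdot) = 0$). Multiplying $z^n_t - \nu z^n_{xx} = f^n_x$ by $z^n$, integrating over $x \in (0,1)$, and integrating by parts (the boundary terms $[f^n z^n]_0^1$ and $[\nu z^n_x z^n]_0^1$ both vanish because $z^n(t,0) = z^n(t,1) = 0$) yields
\begin{equation}
 \frac{1}{2}\frac{\dif}{\dt} \int_0^1 (z^n)^2 + \nu \int_0^1 (z^n_x)^2 = -\int_0^1 f^n z^n_x.
\end{equation}
By Cauchy--Schwarz and Young's inequality, the right-hand side is bounded by $\frac{1}{2\nu}\int_0^1 (f^n)^2 + \frac{\nu}{2}\int_0^1 (z^n_x)^2$, so absorbing the gradient term on the left and integrating in time over $(0,T')$ (with $z^n(0,\cdot) = 0$) gives
\begin{equation}
 \frac{1}{2} |z^n(T',\cdot)|_2^2 + \frac{\nu}{2}\int_0^{T'}\!\!\int_0^1 (z^n_x)^2 \leq \frac{1}{2\nu} \|f^n\|_2^2.
\end{equation}
Taking a supremum over $T' \in (0,T)$ yields $\nu^{1/2} \|z^n\|_{L^\infty(L^2)} + \nu \|z^n_x\|_{L^2} \lesssim \|f^n\|_2$. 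Passing to the limit $n \to \infty$ (the weak $L^2$ limit of $z^n$ must coincide with the unique weak solution $z$ by uniqueness) then transfers the estimate to $z$.

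I do not expect a genuine obstacle here; the only mild subtlety is the functional-analytic justification that the weak solution of Lemma~\ref{lemma.heat.weak} with datum $f_x \in (X_T)'$ coincides with the $L^2$-limit of the smooth approximants, which follows from the uniqueness part of that same lemma once one checks that each $z^n$ satisfies the transposition identity~\eqref{heat.transposition} with $\langle f^n_x, \varphi\rangle = -\iint f^n \varphi_x$, and that both sides converge as $n \to \infty$.
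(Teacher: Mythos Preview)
Your proposal is correct and follows essentially the same approach as the paper: existence and uniqueness via Lemma~\ref{lemma.heat.weak} (noting $f_x \in (X_T)'$), followed by the energy estimate obtained by multiplying the equation by $z$ and integrating by parts. The paper's proof is terser, omitting the smooth approximation step you include for rigor, but the core argument is identical.
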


\begin{proof}
For $f \in L^2$, it is easy to check that $f_x \in X_T'$. Hence, we can apply 
Lemma~\ref{lemma.heat.weak} and system~\eqref{system.heat.fx} has a unique 
solution $z \in L^2$. In fact, this solution is even smoother. 
Estimate~\eqref{estimate.lemma.heat.fx} is obtained as usual by multiplying
equation~\eqref{system.heat.fx} by $z$ and integration by parts.
\end{proof}

\subsection{Burgers and forced Burgers systems}

We move on to Burgers-like systems. For the sake of completeness, we provide
a short proof of the existence of a solution to system~\eqref{system.burgers} 
and a precise estimate for forced Burgers-like systems that will be necessary
in the sequel.

\begin{lemma} \label{lemma.burgers.forced}
Let $w \in X_T$, $g \in L^2((0,T),H^1(0,1))$ and 
$y^0 \in H^1_0(0,1)$. We consider $y \in X_T$ a solution to the following 
forced Burgers-like system:
\begin{equation} \label{system.burgers.forced}
 \left\{ 
 \begin{aligned}
    y_t - \nu y_{xx} & = - yy_x + (wy)_x + g_x && \text{in } (0,T) \times (0,1), \\
    y(t,0) &= 0 && \text{in } (0,T), \\
    y(t,1) &= 0 && \text{in } (0,T), \\
    y(0,x) &= y^0(x) && \text{in } (0,1).
 \end{aligned}
 \right.
\end{equation}
Then,
\begin{equation} \label{estimate.burgers.forced}
 \begin{split}
 \nu \left\| y_{xx} \right\|_2
 + \sqrt{\nu} \left\| y_{x} \right\|_2
 + \left\| y_{t} \right\|_2
 \lesssim &
 \left\| g_x \right\|_2
 + e^{\gamma} \left\| w_x \right\|_{L^2(L^\infty)} 
   \left( \nu^{-1/2} \left\| g\right\|_2 + \left|y^0\right|_2^2 \right)  \\
 & + \left(1 + \sqrt{\gamma} e^{\gamma}\right) \left\| w \right\|_\infty 
 \left( \nu^{-1} \left\| g \right\|_2 + \nu^{-1/2} \left|y^0\right|_2^2 \right) \\
 & + \left(1+\sqrt{\gamma}e^{6\gamma}\right) e^\gamma \left\|g\right\|_{L^2(L^\infty)}
 \left( \nu^{-3/2} \left\|g\right\|_2 + \nu^{-1}\left|y^0\right|_2 \right) \\
 & + \left(1+\sqrt{\gamma}e^{6\gamma}\right) \nu^{-1/2} \left|y^0\right|_4^2
 + \nu^{1/2} \left|y^0_x\right|_2.
 \end{split}
\end{equation}
where we introduce $\gamma = \frac{1}{\nu} \left\| w \right\|_{L^2(L^\infty)}^2$.
\end{lemma}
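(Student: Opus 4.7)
The plan is to derive~\eqref{estimate.burgers.forced} through a ladder of energy estimates, each closed by Gronwall's inequality. The parameter $\gamma = \nu^{-1} \|w\|_{L^2(L^\infty)}^2$ will appear as a Gronwall integrand every time we encounter a transport-like contribution involving $w$, and the multiple rows on the right hand side of~\eqref{estimate.burgers.forced} reflect the cumulative cost of controlling, in turn, $\|y\|_{L^\infty(L^2)}$, then $\|y\|_{L^\infty(L^4)}$ and $\|y\|_\infty$, and finally the full $X_T$ norm.

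Step one is the base energy estimate. Testing the equation against $y$ kills the Burgers flux $\int y^2 y_x$ and the boundary terms, while $\int (wy)_x y = -\int w y y_x$ and $\int g_x y = -\int g y_x$ are absorbed into $\nu |y_x|_2^2$ at cost $\nu^{-1} \|w\|_\infty^2 |y|_2^2 + \nu^{-1} |g|_2^2$. Gronwall then yields
\begin{equation*}
  \|y\|_{L^\infty(L^2)}^2 + \nu \|y_x\|_2^2 \lesssim e^{2\gamma} \bigl( |y^0|_2^2 + \nu^{-1} \|g\|_2^2 \bigr),
\end{equation*}
which already explains the $\nu^{-1/2} \|g\|_2 + |y^0|_2^2$ factors appearing in the second and third rows of~\eqref{estimate.burgers.forced}.

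Step two upgrades this to an $L^\infty$ bound on $y$. Testing against $|y|^2 y$, the Burgers nonlinearity again vanishes as a total derivative, but the quartic structure produces an exponential with a larger constant, explaining the factor $e^{6\gamma}$. Interpolating the resulting $L^\infty(L^4)$ control with the $L^\infty(H^1)$ bound being built in the next step gives an $L^\infty$ estimate on $y$, and this is where the $|y^0|_4^2$ term of~\eqref{estimate.burgers.forced} originates.

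Step three is the core $X_T$ estimate, obtained by testing against $-y_{xx}$. The leading $\|g_x\|_2$ comes from absorbing $\int g_x y_{xx}$ by Young's inequality against $\nu |y_{xx}|_2^2$. Splitting $(wy)_x = w_x y + w y_x$ produces the two rows containing $\|w_x\|_{L^2(L^\infty)}$ and $\|w\|_\infty$ respectively: the first piece is bounded by $\|w_x(t,\cdot)\|_\infty |y|_2 \cdot |y_{xx}|_2$, reusing the $L^\infty(L^2)$ output of Step one, the second by $\|w\|_\infty |y_x|_2 \cdot |y_{xx}|_2$, reusing the $L^2(H^1)$ output. The Burgers term $\int yy_x y_{xx} \le \|y\|_\infty |y_x|_2 |y_{xx}|_2$ is where the output of Step two is consumed; this is the main obstacle, since without an $L^\infty$ bound on $y$ one cannot close the $|y_x|_2^2$ Gronwall inequality, and it is the nesting of the $L^4$ Gronwall inside the $H^1$ Gronwall that produces the $(1+\sqrt{\gamma}e^{6\gamma})$ coefficient. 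Finally, $\|y_t\|_2$ is recovered directly from the equation via $\|y_t\|_2 \le \nu \|y_{xx}\|_2 + \|yy_x\|_2 + \|(wy)_x\|_2 + \|g_x\|_2$, every piece now being under control.
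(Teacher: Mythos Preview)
Your Steps 1 and 2 (testing against $y$ and $y^3$) are exactly what the paper does, and your Step 3 is morally the same as the paper's conclusion via Lemma~\ref{lemma.heat.f}. The difference lies in how you control the Burgers contribution, and there your argument has a genuine circularity.

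You write that Step 2 yields $\|y\|_{L^\infty(L^4)}$, and that an $L^\infty$ bound on $y$ then follows by ``interpolating \ldots\ with the $L^\infty(H^1)$ bound being built in the next step''. But Step 3 itself needs $\|y\|_\infty$ to absorb $\int y y_x y_{xx} \le \|y\|_\infty |y_x|_2 |y_{xx}|_2$ before any $H^1$ bound is available. If you try to break the loop by Gagliardo--Nirenberg, say $\|y\|_\infty \lesssim |y|_4^{2/3}|y_x|_2^{1/3}$, the resulting power of $|y_x|_2$ in the differential inequality is superquadratic and no linear Gr\"onwall closes it. So the ``nesting'' you allude to is not actually explained.

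The paper avoids this entirely by noticing that the $L^4$ energy identity already controls $\|yy_x\|_2$ directly: testing against $y^3$ and integrating the viscous term by parts produces the dissipation $3\nu\int_0^1 y^2 y_x^2 = 3\nu\,|yy_x|_2^2$ on the left-hand side. Thus the same Gr\"onwall that bounds $\|y\|_{L^\infty(L^4)}$ simultaneously bounds $\nu\|yy_x\|_{L^2}^2$ (this is~\eqref{eq.lemma.forced.7}). With $\|yy_x\|_2$ in hand as a known quantity, one simply invokes the linear heat estimate of Lemma~\ref{lemma.heat.f} with source $f = g_x + w_x y + w y_x - y y_x \in L^2$; no $L^\infty$ bound on $y$, no interpolation, and no third Gr\"onwall is needed. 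Your Step 2 already contains the missing ingredient---you just have to read off the dissipation term rather than discard it.
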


\begin{proof} 
\textbf{$L^2$ estimates for $y$ and $y_x$.}
We start by multiplying equation~\eqref{system.burgers.forced} by $y$, 
and integrate by parts over $(0,1)$:
\begin{equation} \label{eq.lemma.forced.1}
 \begin{split}
  \frac{1}{2} \frac{\dif}{\dt} \int_0^1 y^2 
  + \nu \int_0^1 y_x^2
  & = -\int_0^1 w y y_x - \int_0^1 g y_x \\
  & \leq \frac{2}{2\nu} \int_0^1 w^2y^2
   + \frac{\nu}{4} \int_0^1 y_x^2
   + \frac{2}{2\nu} \int_0^1 g^2
   + \frac{\nu}{4} \int_0^1 y_x^2.
 \end{split}
\end{equation}
From~\eqref{eq.lemma.forced.1}, we deduce:
\begin{equation} \label{eq.lemma.forced.2}
 \frac{\dif}{\dt} \int_0^1 y^2 + \nu \int_0^1 y_x^2
 \leq
 \frac{2}{\nu} |w(t, \cdot)|_\infty^2 \int_0^1 y^2 
 + \frac{2}{\nu} \int_0^1 g^2. 
\end{equation}
We apply Grönwall's lemma to~\eqref{eq.lemma.forced.2} to obtain:
\begin{equation} \label{eq.lemma.forced.3}
 \left\| y \right\|_{L^\infty(L^2)}^2
 \leq
 e^{2\gamma} \left( \frac{2}{\nu} \left\| g \right\|_2^2
 + \left| y^0 \right|_2^2 \right).
\end{equation}
Plugging~\eqref{eq.lemma.forced.3} into~\eqref{eq.lemma.forced.2} yields:
\begin{equation} \label{eq.lemma.forced.4}
 \nu \left\| y_x \right\|_2^2
 \leq
 \left( 1 + 2\gamma e^{2\gamma} \right)
 \left( \frac{2}{\nu} \left\| g \right\|_2^2
   + \left| y^0 \right|_2^2 \right).
\end{equation}

\textbf{$L^2$ estimate for $yy_x$.} We repeat a similar technique, multiplying
this time equation~\eqref{system.burgers.forced} by $y^3$. Using the same 
approach yields:
\begin{equation} \label{eq.lemma.forced.5}
 \frac{\dif}{\dt} \int_0^1 y^4 + 6\nu\int_0^1 y^2y_x^2
 \leq
 \frac{12}{\nu}|w(t, \cdot)|_\infty^2 \int_0^1 y^4
 + \frac{12}{\nu} |g(t, \cdot)|_\infty^2 \int_0^1 y^2.
\end{equation}
We apply Grönwall's lemma to~\eqref{eq.lemma.forced.5} to obtain:
\begin{equation} \label{eq.lemma.forced.6}
 \left\| y \right\|_{L^\infty(L^4)}^4
 \leq
 e^{12\gamma} \left( \frac{12}{\nu} 
 \left\| g \right\|_{L^2(L^\infty)}^2 \left\| y \right\|_{L^\infty(L^2)}^2
 + \left| y^0 \right|_4^4 \right).
\end{equation}
Once again, plugging back estimate~\eqref{eq.lemma.forced.6} 
into~\eqref{eq.lemma.forced.5} gives:
\begin{equation} \label{eq.lemma.forced.7}
 6 \nu \left\| yy_x \right\|_2^2
 \leq
 \left( 1 + 12 \gamma e^{12\gamma} \right)
 \left( \frac{12}{\nu} 
 \left\| g \right\|_{L^2(L^\infty)}^2 \left\| y \right\|_{L^\infty(L^2)}^2
 + \left| y^0 \right|_4^4 \right).
\end{equation}
\textbf{Conclusion.} To conclude the proof, we use Lemma~\ref{lemma.heat.f},
with a source term $f = g_x + w_x y + w y_x - yy_x$. 
Estimate~\eqref{estimate.burgers.forced}
comes from the combination of~\eqref{estimate.lemma.heat.f} with 
equations~\eqref{eq.lemma.forced.3},~\eqref{eq.lemma.forced.4} 
and~\eqref{eq.lemma.forced.7}.
\end{proof}

\begin{lemma} \label{lemma.well-posedness}
 For any initial data $y_0 \in H^1_0(0,1)$ and any control $u \in L^2(0,T)$, 
 system~\eqref{system.burgers} has a unique solution $y \in X_T$. Moreover:
 \begin{align} 
  \label{estimate.burgers} 
  \ltwo{y_{xx}} + \ltwo{y_t}
  & \lesssim |u|_2 + |u|_2^2 + |y^0|_4^2 + |y^0_x|_2, \\
  \label{eq.maximum.burgers} \linf{y} 
  & \leq |y^0|_\infty + |u|_{L^1}.
 \end{align}
\end{lemma}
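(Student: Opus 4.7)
The plan is to derive both estimates from Lemma \ref{lemma.burgers.forced} plus a short maximum-principle argument, and to treat existence via a standard approximation scheme. For the energy bound \eqref{estimate.burgers}, I would apply Lemma \ref{lemma.burgers.forced} with $\nu = 1$, $w = 0$ and $g(t,x) = x u(t)$, which forces $g_x = u(t)$ and makes system \eqref{system.burgers.forced} coincide with \eqref{system.burgers}. Since $w = 0$, the exponential parameter $\gamma$ is zero and every term of \eqref{estimate.burgers.forced} involving $w$ disappears; the remaining norms of $g$ all equal $|u|_2$ up to a harmless multiplicative constant. Collecting what is left and absorbing the mixed term $|u|_2 |y^0|_2$ by Young's inequality and Poincar\'e ($|y^0|_2 \lesssim |y^0_x|_2$) yields \eqref{estimate.burgers}.

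For the $L^\infty$ bound \eqref{eq.maximum.burgers}, I would compare $y$ against the spatially constant super- and sub-solutions $\pm M(t)$, where $M(t) := |y^0|_\infty + \int_0^t |u(s)|\,\ds$. Setting $v := y - M$, the fact that $M$ depends only on $t$ and that $M'(t) \geq u(t)$ gives
\[
v_t - v_{xx} + y y_x \leq 0, \qquad v(t,0) = v(t,1) = -M(t) \leq 0, \qquad v(0,\cdot) \leq 0.
\]
Testing against $v_+$, the diffusion contributes $\|(v_+)_x\|_2^2$ with no boundary term (as $M \geq 0$ forces $v_+$ to vanish at $x = 0, 1$), and the Burgers nonlinearity cancels exactly: writing $y = v + M$, $y_x = v_x$ and using the pointwise identity $v_+ v = v_+^2$, one computes
\[
\int_0^1 v_+\, y\, y_x\, \dx = \tfrac{1}{3}\bigl[v_+^3\bigr]_0^1 + \tfrac{M}{2}\bigl[v_+^2\bigr]_0^1 = 0.
\]
Therefore $\frac{\dif}{\dt}\ltwo{v_+}^2 \leq 0$ with $v_+(0) = 0$, so $y \leq M(t)$; the symmetric argument applied to $-y$ against the subsolution $-M(t)$ gives $y \geq -M(t)$, proving \eqref{eq.maximum.burgers}.

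Existence and uniqueness are then routine once the a priori bound is in hand: a Galerkin approximation in the Dirichlet Laplacian eigenbasis, combined with the finite-dimensional analogue of Lemma \ref{lemma.burgers.forced}, delivers uniform $X_T$ estimates; Aubin--Lions compactness provides a limit in $X_T$ solving \eqref{system.burgers}; uniqueness is obtained by writing the equation for $w := y_1 - y_2$ in the conservative form $w_t - w_{xx} + \frac{1}{2}((y_1+y_2)w)_x = 0$, testing against $w$, and applying Gr\"onwall using the $L^\infty_t L^2_x$ control on $(y_1+y_2)_x$ provided by the $X_T$ bound. The only genuinely delicate step in the entire proof is the boundary cancellation $[v_+^3]_0^1 = [v_+^2]_0^1 = 0$ in the maximum-principle computation, which relies crucially on the positivity of $M(t)$; everything else reduces to bookkeeping once Lemma \ref{lemma.burgers.forced} is invoked.
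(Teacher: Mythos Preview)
Your proposal is correct and follows essentially the same route as the paper: apply Lemma~\ref{lemma.burgers.forced} with $\nu=1$, $w=0$, $g(t,x)=x\,u(t)$ for~\eqref{estimate.burgers}, invoke the maximum principle for~\eqref{eq.maximum.burgers}, and defer existence/uniqueness to standard machinery. One small point: absorbing the cross term $|u|_2\,|y^0|_2$ via Young and Poincar\'e leaves you with $|y^0_x|_2^2$ rather than $|y^0_x|_2$; use instead $|y^0|_2\le|y^0|_4$ on the unit interval so that Young gives $|u|_2^2+|y^0|_4^2$, which matches~\eqref{estimate.burgers} exactly.
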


\begin{proof} 
 This type of existence result relies on standard \textit{a priori} estimates 
 and the use of a fixed point theorem. Such techniques are described 
 in~\cite{MR0259693}. One can also use a semi-group method as in~\cite{MR710486}.
 The quantitative estimate is obtained by applying 
 Lemma~\ref{lemma.burgers.forced} with $w = 0$ (hence $\gamma = 0$) 
 and $g(t,x) = xu(t)$. Equation~\eqref{estimate.burgers.forced} 
 yields ~\eqref{estimate.burgers}. The second 
 estimate~\eqref{eq.maximum.burgers} is a consequence of the maximum principle,
 which can be applied in this strong setting.
\end{proof}

\section{From Burgers to a kernel integral operator}
\label{section.burgers.k0}

\subsection{A general method for evaluating a projection}

As we mentionned in the introduction, we are going to consider a projection
of the state $b$ against some given profile $\rho(x)$ at the final time $t = 1$. 
Since $a$ depends linearly on $u$ and $b$ depends quadratically on $a$, it is 
natural to look for this projection as a quadratic integral operator acting on 
our control $u$. Indeed, let us prove the following result.

\begin{lemma} \label{lemma.k}
Let $\rho \in L^2(0,1)$ and $\varepsilon > 0$. There exists a symmetric kernel 
$K^\varepsilon \in L^\infty((0,1)^2)$ such that, for any $u \in L^2(0,1)$, the 
solution to system~\eqref{system.a}-\eqref{system.b} satisfies:
\begin{equation} \label{eq.lemma.k}
 \int_0^1 b(1,x) \rho(x) \dif x = 
 \iint_{(0,1)^2} K^\varepsilon(s_1,s_2) u(s_1) u(s_2) \ds_1\ds_2.
\end{equation}
\end{lemma}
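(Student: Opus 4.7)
The strategy is to exploit the bilinear structure of \eqref{system.a}--\eqref{system.b}: $a$ depends linearly on $u$ through the heat semigroup, and $b$ depends quadratically on $a$ through the source $-aa_x$. Combining a Duhamel representation for $a$ with a duality argument for $b$ should make the quadratic dependence on $u$ explicit as a kernel integral. Concretely, let $A^\varepsilon(\sigma,x)$ denote the solution of the heat equation $A_t - \varepsilon A_{xx} = 0$ on $(0,1)$ with Dirichlet boundary conditions and initial datum identically equal to $1$. By the maximum principle (Lemma~\ref{lemma.max} after smooth approximation), $0 \le A^\varepsilon(\sigma,x) \le 1$. Duhamel's principle applied to system~\eqref{system.a} gives
\[ a(t,x) = \int_0^t u(s)\, A^\varepsilon(t-s,x)\, \ds. \]

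Next I introduce the dual profile $\varphi^\varepsilon$ solving the backward heat equation $\varphi^\varepsilon_t + \varepsilon \varphi^\varepsilon_{xx} = 0$ on $(0,1)\times(0,1)$ with homogeneous Dirichlet data and final condition $\varphi^\varepsilon(1,\cdot) = \rho$. Testing \eqref{system.b} against $\varphi^\varepsilon$, integrating by parts once in $t$ (using $b(0,\cdot) = 0$ and $\varphi^\varepsilon(1,\cdot) = \rho$) and twice in $x$ (all boundary terms vanish by the Dirichlet conditions on $b$ and $\varphi^\varepsilon$), then rewriting $aa_x = \tfrac{1}{2}(a^2)_x$ and integrating once more in $x$, I obtain the identity
\[ \int_0^1 b(1,x)\rho(x)\dx = \frac{1}{2}\izozo a^2(t,x)\, \varphi^\varepsilon_x(t,x)\,\dx\dt. \]

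Substituting the Duhamel formula for $a$ into $a^2$ and exchanging the order of integration via Fubini (legitimate since $u \in L^2$, $|A^\varepsilon| \le 1$, and $\varphi^\varepsilon_x \in L^2((0,1)^2)$ by standard parabolic regularity for $\rho \in L^2$) then produces the advertised formula with
\[ K^\varepsilon(s_1,s_2) := \frac{1}{2} \int_{\max(s_1,s_2)}^{1}\int_0^1 A^\varepsilon(t-s_1,x)\, A^\varepsilon(t-s_2,x)\, \varphi^\varepsilon_x(t,x)\,\dx\dt. \]
Symmetry $K^\varepsilon(s_1,s_2) = K^\varepsilon(s_2,s_1)$ is immediate by swapping the indices in the formula. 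For the $L^\infty$ bound, $|A^\varepsilon| \le 1$ combined with Cauchy--Schwarz gives
\[ \|K^\varepsilon\|_\infty \le \frac{1}{2}\izozo |\varphi^\varepsilon_x(t,x)|\,\dx\dt \le \frac{1}{2}\|\varphi^\varepsilon_x\|_2 < \infty. \]

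\emph{Expected difficulty.} Nothing deep is at stake; the identity is essentially forced by the bilinear structure. The only care required is the justification of the integrations by parts in the duality step at the borderline regularity $\rho \in L^2$, which can be handled either by first arguing for smooth $\rho$ and passing to the limit, or by appealing directly to the transposition formula~\eqref{heat.transposition} from Lemma~\ref{lemma.heat.weak}. The substantial work, namely computing the pointwise limit $K^0$ of $K^\varepsilon$ and proving its coercivity in $H^{-5/4}$, is postponed to Sections~\ref{section.k0.coercive} and~\ref{section.keps.residues}.
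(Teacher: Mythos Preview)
Your proposal is correct and follows essentially the same approach as the paper: Duhamel representation for $a$ via the heat propagator of the constant function $1$ (your $A^\varepsilon$ is the paper's $G$), duality against the backward heat evolution of $\rho$ (your $\varphi^\varepsilon(t,\cdot)$ is the paper's $\Phi(1-t,\cdot)$), and Fubini to extract the kernel, yielding exactly the formula~\eqref{def.k}. The only cosmetic difference is that the paper derives the duality identity~\eqref{eq.lemma.k.1} via an auxiliary primitive $\Psi$ (Lemma~\ref{lemma.z.rho}) rather than by directly testing~\eqref{system.b} against $\varphi^\varepsilon$ as you do; your route is slightly more direct but arrives at the identical expression.
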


The key point of the proof is to convert the pointwise in time projection of
$b$ into an integrated projection over the time interval $(0,1)$. Indeed, we
start with the proof of the following lemma.

\begin{lemma} \label{lemma.z.rho}
Let $f \in L^2((0,1)^2)$, $\varepsilon > 0$ and $z \in X_1$ be the solution to:
\begin{equation} \label{system.z.rho}
 \left\{ 
 \begin{aligned}
    z_t - \varepsilon z_{xx} & = f && \text{in } (0,1) \times (0,1), \\
    z(t,0) &= 0 && \text{in } (0,1), \\
    z(t,1) &= 0 && \text{in } (0,1), \\
    z(0,x) &= 0 && \text{in } (0,1).
 \end{aligned}
 \right.
\end{equation}
Take $\rho \in L^2(0,1)$. The final time projection of $z$ against $\rho$ 
satisfies:
\begin{equation} \label{eq.lemma.z.rho}
  \int_0^1 z(1,x) \rho(x) \dx = \iint_{(0,1)^2} \Phi(1-t,x) f(t,x) \dx \dt,
\end{equation}
where $\Phi \in X_1$ is the solution to:
\begin{equation} \label{def.Phi}
 \left\{ 
 \begin{aligned}
    \Phi_t - \varepsilon \Phi_{xx} & = 0 && \text{in } (0,1) \times (0,1), \\
    \Phi(t,0) &= 0 && \text{in } (0,1), \\
    \Phi(t,1) &= 0 && \text{in } (0,1), \\
    \Phi(0,x) &= \rho(x) && \text{in } (0,1).
 \end{aligned}
 \right.
\end{equation}
\end{lemma}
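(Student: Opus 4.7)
The plan is to prove the identity by a straightforward duality argument, using the backward-in-time adjoint heat equation against which $\Phi(1-t,x)$ is naturally a solution.

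First, I would introduce the time-reversed profile $\psi(t,x) := \Phi(1-t,x)$. Differentiating in $t$ and invoking the PDE satisfied by $\Phi$ from~\eqref{def.Phi}, one checks that
\begin{equation}
\psi_t + \varepsilon \psi_{xx} = 0 \quad \text{in } (0,1)\times(0,1),\qquad \psi(t,0) = \psi(t,1) = 0,\qquad \psi(1,x) = \rho(x).
\end{equation}
So $\psi$ solves the backward heat equation with terminal datum $\rho$ and zero Dirichlet conditions, and by Lemma~\ref{lemma.heat.f} (applied to the time-reversed problem) it lies in $X_1$.

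Second, I would compute $\frac{\dif}{\dt}\int_0^1 z(t,x)\psi(t,x)\dx$. Substituting $z_t = \varepsilon z_{xx} + f$ and $\psi_t = -\varepsilon \psi_{xx}$ yields
\begin{equation}
\frac{\dif}{\dt}\int_0^1 z\psi\,\dx = \int_0^1 f\psi\,\dx + \varepsilon \int_0^1 \bigl(z_{xx}\psi - z\psi_{xx}\bigr)\dx.
\end{equation}
Integrating the bracketed term by parts twice, the boundary contributions vanish because $z$ and $\psi$ both satisfy homogeneous Dirichlet conditions at $x=0$ and $x=1$. Hence the full $x$-integral equals $\int_0^1 f\psi\,\dx$.

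Third, I would integrate this identity over $t\in(0,1)$, using $z(0,\cdot) = 0$ and $\psi(1,\cdot) = \rho$, to arrive at
\begin{equation}
\int_0^1 z(1,x)\rho(x)\dx = \iint_{(0,1)^2} f(t,x)\,\Phi(1-t,x)\,\dx\,\dt,
\end{equation}
which is exactly~\eqref{eq.lemma.z.rho}. There is no genuine obstacle here: the only point requiring a small comment is that the pointwise-in-$t$ traces $z(1,\cdot)$ and $\psi(0,\cdot)$, as well as the integration by parts in $x$, are legitimate because $z,\Phi \in X_1 \hookrightarrow \mathcal{C}^0([0,1],H^1_0(0,1))$ by Lemma~\ref{lemma.xt.c0}, so all manipulations can be carried out in the strong sense (or alternatively by approximating $f$ and $\rho$ by smooth data and passing to the limit).
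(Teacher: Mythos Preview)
Your proof is correct. It is the classical parabolic duality argument: you pair $z$ against the backward heat flow $\psi(t,x)=\Phi(1-t,x)$, differentiate in time, integrate by parts in $x$ (the boundary terms drop because both $z$ and $\psi$ vanish at $x=0,1$), and integrate over $t\in(0,1)$ using $z(0,\cdot)=0$ and $\psi(1,\cdot)=\rho$. The regularity comment at the end is well placed.

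The paper takes a slightly different route. Instead of working directly with $\Phi(1-t,x)$, it introduces an auxiliary $\Psi\in X_1$ solving $\Psi_t-\varepsilon\Psi_{xx}=\rho$ with zero initial and boundary data, writes $\int_0^1 z(1,x)\rho(x)\,\dif x$ as $\left.\frac{\dif}{\dif T}\int_0^T\!\int_0^1 z\rho\right|_{T=1}$, substitutes $\rho=\Psi_t-\varepsilon\Psi_{xx}$, integrates by parts in $(t,x)$ to land on $\int\!\!\int f(t,x)\Psi_t(1-t,x)$, and finally observes $\Psi_t=\Phi$. Your approach is more direct and more standard; the paper's detour through $\Psi$ has the minor technical advantage that $\Psi\in X_1$ genuinely holds for $\rho\in L^2$ (whereas $\Phi\in X_1$ really needs $\rho\in H^1_0$), so the integrations by parts are cleanly justified without any density argument. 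Both lead to the same formula with essentially the same amount of work.
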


\begin{proof}
Let us introduce $\Psi \in X_1$, the solution to:
\begin{equation} \label{system.Psi}
 \left\{ 
 \begin{aligned}
    \Psi_t - \varepsilon \Psi_{xx} & = \rho && \text{in } (0,1) \times (0,1), \\
    \Psi(t,0) &= 0 && \text{in } (0,1), \\
    \Psi(t,1) &= 0 && \text{in } (0,1), \\
    \Psi(0,x) &= 0 && \text{in } (0,1).
 \end{aligned}
 \right.
\end{equation}
Using this system, we can convert the time punctual projection of the state $z$ 
against $\rho$ into a projection of the source term $f$ onto the full square:
\begin{equation} \label{eq.lemma.z.rho.1}
 \begin{split}
  \int_0^1 z(1,x) \rho(x) \dx 
  & = \left. \frac{\dif}{\dif T} \int_0^T \int_0^1 z(t,x) \cdot \rho(x) \dx \dt \right|_{T=1} \\
  & = \left. \frac{\dif}{\dif T} \int_0^T \int_0^1  z(t,x) 
    \cdot \{\Psi_t - \varepsilon \Psi_{xx}\}(T-t,x) \dx \dt \right|_{T=1} \\
  & = \left. \frac{\dif}{\dif T} \int_0^T \int_0^1 \{ z_t - \varepsilon z_{xx} \}(t,x) 
    \cdot \Psi(T-t,x) \dx \dt \right|_{T=1} \\
  & = \left. \frac{\dif}{\dif T} \int_0^T \int_0^1 f(t,x) 
    \cdot \Psi(T-t,x) \dx \dt \right|_{T=1} \\
  & = \int_0^1 \int_0^1 f(t,x) \Psi_t(1-t,x) \dx \dt.
 \end{split}
\end{equation}
The integrations by parts performed above are valid because of the null boundary
and initial conditions chosen in systems~\eqref{system.z.rho} 
and~\eqref{system.Psi}. Equation~\eqref{eq.lemma.z.rho} is a direct consequence 
of~\eqref{eq.lemma.z.rho.1} since $\Psi_t = \Phi$.
\end{proof}

Let us come back to the proof of Lemma~\ref{lemma.k}. We apply 
Lemma~\ref{lemma.z.rho} to the state $b$. Thus, from~\eqref{system.b} 
and~\eqref{eq.lemma.z.rho} we deduce that:
\begin{equation} \label{eq.lemma.k.1}
 \begin{split}
   \int_0^1 b(1,x) \rho(x) \dx 
   & = \int_0^1 \int_0^1 \Phi(1-t,x) [-aa_x] (t,x) \dx \dt \\
   & = \frac{1}{2} \int_0^1 \int_0^1 \Phi_x(1-t,x) a^2(t,x) \dx \dt.
 \end{split}
\end{equation}
In order to express our projection directly using $u$, we need to eliminate
$a$ from~\eqref{eq.lemma.k.1}. This can easily be done using an elementary 
solution of the heat system. Therefore, we introduce $G$ the solution to:
\begin{equation} \label{def.G}
 \left\{ 
 \begin{aligned}
    G_t - \varepsilon G_{xx} & = 0 && \text{in } (0,1) \times (0,1), \\
    G(t,0) &= 0 && \text{in } (0,1), \\
    G(t,1) &= 0 && \text{in } (0,1), \\
    G(0,x) &= 1 && \text{in } (0,1).
 \end{aligned}
 \right.
\end{equation}
Using the initial condition $a(t=0,\cdot) \equiv 0$ from system~\eqref{system.a}, 
we can expand $a$ as:
\begin{equation} \label{eq.lemma.k.2}
a(t,x) = \int_0^t G(t-s, x) u(s) \dif s.
\end{equation}
Pluging~\eqref{eq.lemma.k.2} into~\eqref{eq.lemma.k.1} yields:
\begin{equation} \label{eq.lemma.k.3}
 \begin{split}
  \int_0^1 b(1,x) \rho(x) \dif x 
  & = \frac{1}{2} \int_0^1 \int_0^1 \Phi_x(1-t) 
   \left( \int_0^t G(t-s_1) u(s_1) \dif s_1 \right) 
   \left( \int_0^t G(t-s_2) u(s_2) \dif s_2 \right) \dt \\
  & = \frac{1}{2} \int_0^1 \int_0^1 u(s_1) u(s_2) 
    \left( \int_{s_1 \vee s_2}^1 \int_0^1 \Phi_x(1-t) G(t-s_1) G(t-s_2) \dif t \right)
    \ds_1 \ds_2.  
 \end{split}
\end{equation}
Finally, equation~\eqref{eq.lemma.k.3} proves~\eqref{eq.lemma.k} with:
\begin{equation} \label{def.k}
  K^\varepsilon(s_1, s_2) 
  = \frac{1}{2} \int_{s_1 \vee s_2}^1 \int_0^1 
  \Phi_x(1-t,x) G(t-s_1, x) G(t-s_2, x) \dx \dt.
\end{equation}
Thus, we have proved Lemma~\ref{lemma.k} and we have a very precise description 
of the kernel that is involved. This kernel depends on the projection profile 
$\rho(x)$ by means of $\Phi$ defined in~\eqref{def.Phi}. This kernel also 
strongly depends on the viscosity $\varepsilon$ which is involded in the 
computation of both $\Phi$ and of the elementary solution $G$.

Moreover, it is clear that $K$ is a symmetric kernel and since all terms are
bounded thanks to the maximum principle, we know that $K \in L^\infty$. In fact,
$K$ is even smoother as we will see later on.

\subsection{Choice of a profile $\rho$}

As we have seen in the introduction, a natural choice in the low viscosity
setting would be $\rho(x) = x - \frac{1}{2}$.
We think that our proof could be adapted to work with this profile. However,
the computations are tough because it does not satisfy null boundary conditions.
Thus, we are going to make a choice which is more intrinsic to the Burgers 
system.

For any fixed control value $\bar{u} \in \R$, we want to compute the 
associated steady state $(\bar{a}(x), \bar{b}(x))$ of systems~\eqref{system.a}
and~\eqref{system.b}. Thus, we solve the following system:
\begin{equation} \label{system.steady}
 \left\{ 
 \begin{aligned}
   - \varepsilon \bar{a}_{xx} & = \bar{u} && \text{in } (0,1), \\
   - \varepsilon \bar{b}_{xx} & = - \bar{a} \bar{a}_x && \text{in } (0,1),
 \end{aligned}
 \right.
\end{equation}
with boundary conditions $\bar{a}(0) = \bar{a}(1) = \bar{b}(0) = \bar{b}(1) = 0$.
Integrating~\eqref{system.steady} with respect to $x$ yields the following 
family of steady states:
\begin{equation} \label{eq.abar.bbar}
  \bar{a}(x) = \frac{1}{2\varepsilon} x (1-x) \bar{u} 
  \quad \text{and} \quad
  \bar{b}(x) = \frac{1}{8\varepsilon^3} 
  \left(\frac{x^5}{5}-\frac{x^4}{2}+\frac{x^3}{3}-\frac{x}{30}\right) \bar{u}^2.
\end{equation}
Of course, $\bar{b}$ depends quadratically on $\bar{u}$. Thus 
equation~\eqref{eq.abar.bbar} gives the idea of considering:
\begin{equation} \label{def.rho}
  \rho(x) = \frac{x^5}{5}-\frac{x^4}{2}+\frac{x^3}{3}-\frac{x}{30}.
\end{equation}
This choice of $\rho$ may seem strange because is has been obtained using an
infinite viscosity limit. However, since both $\rho$ and $\rho_{xx}$ satisfy
null boundary conditions, the computations of the different kernel residues
turn out to be easier. In the sequel, we assume that $\rho$ is defined
by~\eqref{def.rho}.

\subsection{Rough computation of the asymptotic kernel}

In this paragraph, we apply Lemma~\ref{lemma.k} to compute the kernel associated
to the choice of $\rho$ given in~\eqref{def.rho}. More specifically, we are 
interested in computing a rough approximation of $K^\varepsilon$ when 
$\varepsilon \rightarrow 0$. This approximation will serve as a motivation for 
the following sections where we will need to estimate all the residues that will
be leaving aside for the moment. Since formula~\eqref{def.k} defining 
$K^\varepsilon$ involves both $\Phi$ and $G$, we need to choose approximations
of these quantities as $\varepsilon \rightarrow 0$.
Looking at system~\eqref{def.Phi} defining $\Phi$, we choose to use:
\begin{equation} \label{eq.approx.Phi}
 \Phi_x(t,x) \approx \rho_x(x).
\end{equation}
Moreover, for $G$ defined by~\eqref{def.G}, we will use the approximation 
$G \approx 1$ inside $(0,1)$. Stopping here would not yield anything useful.
Indeed, since $\int_0^1 \rho_x = \rho(1) - \rho(0) = 0$, we would obtain 
$K^\varepsilon = 0$. Hence, we need to choose an approximation of $G$ that is
more accurate near the boundary, eg:
\begin{equation} \label{eq.approx.G}
 G(t,x) \approx \erf \left( \frac{x}{\sqrt{4\varepsilon t}} \right),
\end{equation}
which we will use near $x = 0$. Note that
equation~\eqref{eq.approx.G} corresponds to the solution of a heat equation
on the real line with an initial data equal to $-1$ for $x < 0$ and $+1$ for 
$x>0$. Thus, it satisfies the boundary condition $G(t,0) \equiv 0$ and serves
as a boundary layer correction.
We compute the integrand inside equation~\eqref{def.k}:
\begin{equation} \label{eq.rough.k.1}
  \begin{alignedat}{2}
    A^\varepsilon(t,s_1,s_2)
    & := \frac{1}{2}
      \int_0^1 \Phi_x(1-t,x) G(t-s_1, x) G(t-s_2, x) \dx 
      && \\
    & =
      \frac{1}{2} 
      \int_0^1 \Phi_x(1-t,x) \left( G(t-s_1, x) G(t-s_2, x) - 1 \right) \dx 
      && \text{since $\int \Phi_x = 0$} \\
    & =
      \int_0^\frac{1}{2} \Phi_x(1-t,x) \left( G(t-s_1, x) G(t-s_2, x) - 1\right) \dx 
      && \text{by parity,} \\
    & \approx
      \int_0^\frac{1}{2} \rho_x(x) \left( 
        \erf \left( \frac{x}{\sqrt{4\varepsilon (t-s_1)}} \right)
        \erf \left( \frac{x}{\sqrt{4\varepsilon (t-s_2)}} \right) - 1 \right) \dx 
      && \text{using~\eqref{eq.approx.Phi},~\eqref{eq.approx.G},} \\
    & \approx
      2\sqrt{\varepsilon} \int_0^\frac{1}{4\sqrt{\varepsilon}} 
        \rho_x\left(2 \sqrt{\varepsilon}x\right) \left( 
        \erf \left( \frac{x}{\sqrt{(t-s_1)}} \right)
        \erf \left( \frac{x}{\sqrt{(t-s_2)}} \right) - 1 \right) \dx 
      && \\
    & \sim
      - 2 \sqrt{\varepsilon} \rho_x(0) \int_0^{+\infty} 
        \left( 1 -
        \erf \left( \frac{x}{\sqrt{(t-s_1)}} \right)
        \erf \left( \frac{x}{\sqrt{(t-s_2)}} \right) \right) \dx.
      && 
  \end{alignedat}
\end{equation}
To carry on with the computation, we need the following integral calculus lemma.
\begin{lemma} \label{lemma.erf}
Let $\alpha, \beta > 0$. Then,
\begin{equation} \label{eq.lemma.erf}
\int_0^{+\infty} \left( 1 - \erf (\alpha x) \erf (\beta x) \right) \dif x 
= \frac{1}{\alpha \beta}\sqrt{\frac{\alpha^2 + \beta^2}{\pi}}.
\end{equation}
\end{lemma}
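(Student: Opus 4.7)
The approach is to evaluate $I(\alpha,\beta):=\int_0^{+\infty}(1-\erf(\alpha x)\erf(\beta x))\dx$ by a single integration by parts that converts the integrand into a combination of Gaussian-times-$\erf$ terms which can then be reduced to a pure Gaussian integral. First I would verify that $I$ is well defined: since $\erf(\alpha x)\sim \frac{2\alpha x}{\sqrt\pi}$ near $0$, the integrand is $O(x^2)$ at the origin; and using the decomposition $1-\erf(\alpha x)\erf(\beta x)=\erfc(\alpha x)+\erf(\alpha x)\erfc(\beta x)$ together with the standard asymptotic $\erfc(z)\sim \frac{e^{-z^2}}{z\sqrt\pi}$ at $+\infty$, the integrand has Gaussian decay at infinity. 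In particular $I$ is finite.

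Next I integrate by parts with $u(x)=1-\erf(\alpha x)\erf(\beta x)$ and antiderivative $v(x)=x$. The boundary term vanishes at $0$ because $v(0)=0$, and at $+\infty$ because $xu(x)$ decays faster than any polynomial. Since $u'(x)=-\frac{2\alpha}{\sqrt\pi}e^{-\alpha^2 x^2}\erf(\beta x)-\frac{2\beta}{\sqrt\pi}e^{-\beta^2 x^2}\erf(\alpha x)$, this yields
\begin{equation*}
I = \frac{2\alpha}{\sqrt\pi}\int_0^{+\infty} x\,e^{-\alpha^2 x^2}\erf(\beta x)\dx + \frac{2\beta}{\sqrt\pi}\int_0^{+\infty} x\,e^{-\beta^2 x^2}\erf(\alpha x)\dx.
\end{equation*}

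Each of these symmetric integrals is evaluated by a second integration by parts, taking $\tilde u=\erf(\beta x)$ (so $\tilde u' =\frac{2\beta}{\sqrt\pi}e^{-\beta^2 x^2}$) and $d\tilde v=x\,e^{-\alpha^2 x^2}\dx$ (so $\tilde v=-\frac{1}{2\alpha^2}e^{-\alpha^2 x^2}$). Both boundary contributions again vanish ($\tilde u(0)=0$ and $\tilde v(+\infty)=0$), and only a single Gaussian integral remains:
\begin{equation*}
\int_0^{+\infty} x\,e^{-\alpha^2 x^2}\erf(\beta x)\dx = \frac{\beta}{\alpha^2\sqrt\pi}\int_0^{+\infty}e^{-(\alpha^2+\beta^2)x^2}\dx = \frac{\beta}{2\alpha^2\sqrt{\alpha^2+\beta^2}}.
\end{equation*}
Substituting this and its $\alpha\leftrightarrow\beta$ symmetric counterpart into the previous display gives $I=\frac{\beta}{\alpha\sqrt{\pi(\alpha^2+\beta^2)}}+\frac{\alpha}{\beta\sqrt{\pi(\alpha^2+\beta^2)}}=\frac{\alpha^2+\beta^2}{\alpha\beta\sqrt{\pi(\alpha^2+\beta^2)}}$, which simplifies to $\frac{1}{\alpha\beta}\sqrt{\frac{\alpha^2+\beta^2}{\pi}}$ as claimed in~\eqref{eq.lemma.erf}. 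I do not expect any substantial obstacle: the computation consists of two routine integrations by parts followed by one elementary Gaussian evaluation, and the only care required is the (immediate) justification that the boundary terms at $+\infty$ vanish, which follows from the Gaussian decay of $e^{-\alpha^2 x^2}$ and of $\erfc(\alpha x)$.
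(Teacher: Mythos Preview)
Your proof is correct. The paper takes a slightly different but closely related route: it exhibits the explicit antiderivative
\[
X\bigl(1-\erf(\alpha X)\erf(\beta X)\bigr)-\frac{\erf(\alpha X)e^{-\beta^2 X^2}}{\beta\sqrt\pi}-\frac{\erf(\beta X)e^{-\alpha^2 X^2}}{\alpha\sqrt\pi}+\frac{\sqrt{\alpha^2+\beta^2}}{\alpha\beta\sqrt\pi}\,\erf\bigl(\sqrt{\alpha^2+\beta^2}\,X\bigr),
\]
asks the reader to verify it by differentiation, and then lets $X\to+\infty$. Your two successive integrations by parts are precisely how one would \emph{discover} this primitive rather than merely check it: the first produces the $X(1-\erf\cdot\erf)$ term and the two Gaussian-times-$\erf$ integrals, and the second converts each of those into the $\erf\bigl(\sqrt{\alpha^2+\beta^2}\,X\bigr)$ contribution. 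So the underlying computation is the same; your version is more constructive and self-contained, while the paper's is shorter but leaves the discovery of the primitive to the reader.
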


\begin{proof}
We can find an explicit primitive for the integrand. Indeed, for any $X > 0$,
\begin{equation} \label{eq.erf.prim}
 \begin{split}
  \int_0^X \left( 1 - \erf (\alpha x) \erf (\beta x) \right) \dif x 
  = & X \left( 1 - \erf(\alpha X) \erf(\beta X) \right) \\
  & - \frac{\erf(\alpha X) \exp(-\beta^2 X^2)}{\beta\sqrt{\pi}}
    - \frac{\erf(\beta X) \exp(-\alpha^2 X^2)}{\alpha\sqrt{\pi}} \\
  & + \frac{\sqrt{\alpha^2 + \beta^2}}{\alpha\beta\sqrt{\pi}} 
      \erf\left(\sqrt{\alpha^2 + \beta^2} X\right).
 \end{split}
\end{equation}
Equation~\eqref{eq.erf.prim} can be checked by differentiation.
Taking its limit as $X \rightarrow + \infty$ yields~\eqref{eq.lemma.erf}.
\end{proof}

We return to the computation of the asymptotic kernel as 
$\varepsilon \rightarrow 0$. We note that $\rho_x(0) = - \frac{1}{30}$.
Combined with~\eqref{def.k},~\eqref{eq.rough.k.1} and
Lemma~\ref{lemma.erf}, we obtain:
\begin{equation} \label{eq.rough.k.2}
 \begin{split}
  K^\varepsilon(s_1,s_2) 
  & = \int_{s_1 \vee s_2}^1 A^\varepsilon(t, s_1, s_2) \dt \\
  & \approx \frac{\sqrt{\varepsilon}}{15\sqrt{\pi}} 
    \int_{s_1 \vee s_2}^1 \sqrt{ (t - s_1) + (t - s_2) } \dt \\
  & \approx \frac{\sqrt{\varepsilon}}{45\sqrt{\pi}}
    \cdot \left[ \left(2t-s_1-s_2\right)^{\frac{3}{2}} \right]_{s_1 \vee s_2}^1 \\
  & \approx \frac{\sqrt{\varepsilon}}{45\sqrt{\pi}} K^0(s_1,s_2), \\
 \end{split} 
\end{equation}
where we introduce the asymptotic kernel:
\begin{equation} \label{def.k0}
  K^0(s_1, s_2) = \left(2-s_1-s_2\right)^{3/2} - \left|s_1-s_2\right|^{3/2}.
\end{equation}
At this stage, equation~\eqref{eq.rough.k.2} is not rigorous. The meaning of the 
$\approx$ sign has to be made precise. This is the goal of 
Section~\ref{section.keps.residues} where we prove that this asymptotic formula 
does make sense. Indeed, we estimate the kernel residues between $K^\varepsilon$ 
and $\sqrt{\varepsilon} K^0$. They turn out to be both small (with respect 
to~$\varepsilon$) and smooth (with respect to the spaces on which they
define continuous quadratic forms).

\section{Coercivity of the asymptotic kernel}
\label{section.k0.coercive}

In this section, our goal is to prove the coercivity of the kernel $K^0(x,y)$. 
This is a symmetric real-valued kernel defined on $(0,1) \times (0,1)$. Note 
that, since no confusion is possible, we will use $(x,y)$ instead of $(s_1,s_2)$
for the variables of the kernel to lighten notations of this section. We
will prove the following theorem.

\begin{lemma} \label{lemma.coercivity}
 The integral operator associated to $K^0$ is coercive in the space 
 $H^{-5/4}(0,1)$. There exists $\gamma > 0$ such that, for any 
 $f \in L^2(0,1)$, the following inequality holds:
 \begin{equation}
  \int_0^1 \int_0^1 K^0(x,y) f(x) f(y) \dif x \dif y 
  \geq 
  \gamma \left\| F \right\|^2_{H^{-1/4}(0,1)},
 \end{equation} 
 where $F$ is the primitive of $f$ such that $F(0) = 0$.
\end{lemma}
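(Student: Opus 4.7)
The plan is to integrate by parts twice in the bilinear form $\int_0^1\int_0^1 K^0(x,y)f(x)f(y)\dx\dy$ in order to transfer both derivatives from $f$ onto the kernel, then to recognize the resulting kernel as the sum of a one-dimensional Riesz potential plus an auxiliary positive kernel, and finally to identify the Riesz part with the squared $H^{-1/4}$ norm of $F$ via Fourier analysis.

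The first step requires checking that all boundary terms vanish. At $x=0$ and $y=0$ this is automatic from $F(0)=0$. At $x=1$ it uses the algebraic identity $K^0(1,y)=(1-y)^{3/2}-(1-y)^{3/2}=0$ for every $y\in[0,1]$, and at $y=1$ the analogous cancellation after one differentiation, namely $\partial_x K^0(x,1)=-\tfrac{3}{2}(1-x)^{1/2}+\tfrac{3}{2}(1-x)^{1/2}=0$. One therefore obtains
\begin{equation*}
\int_0^1\int_0^1 K^0(x,y)\,f(x)f(y)\,\dx\dy \;=\; \int_0^1\int_0^1 \partial_x\partial_y K^0(x,y)\,F(x)F(y)\,\dx\dy,
\end{equation*}
and a direct computation of the two mixed partials yields $\partial_x\partial_y K^0(x,y)=\tfrac{3}{4}|x-y|^{-1/2}+\tfrac{3}{4}(2-x-y)^{-1/2}$.

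Next, I would discard the auxiliary $(2-x-y)^{-1/2}$ piece by showing that it defines a non-negative quadratic form. Using the Laplace representation $\lambda^{-1/2}=\pi^{-1/2}\int_0^\infty t^{-1/2}e^{-t\lambda}\dt$ gives
\begin{equation*}
\int_0^1\!\int_0^1 (2-x-y)^{-1/2} F(x)F(y)\,\dx\dy \;=\; \pi^{-1/2}\!\int_0^\infty t^{-1/2} e^{-2t}\biggl(\int_0^1 e^{tx}F(x)\,\dx\biggr)^{\!2} \dt \;\geq\; 0.
\end{equation*}
Coercivity thus reduces to the estimate $\int_0^1\!\int_0^1|x-y|^{-1/2}F(x)F(y)\,\dx\dy\gtrsim \|F\|_{H^{-1/4}(0,1)}^2$. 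Extending $F$ by zero to the real line and using Plancherel together with the classical Fourier transform of the one-dimensional Riesz kernel, one obtains
\begin{equation*}
\int_\R\!\int_\R |x-y|^{-1/2}F(x)F(y)\,\dx\dy \;=\; c\int_\R |\xi|^{-1/2}|\widehat F(\xi)|^2\,d\xi \;=\; c\,\|F\|_{\dot H^{-1/4}(\R)}^2
\end{equation*}
for some explicit $c>0$; the integral near $\xi=0$ converges because $F$ has compact support, so $\widehat F$ is bounded there.

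The main obstacle is the final, essentially functional-analytic step: going from the homogeneous whole-line norm back to the interval norm, i.e.\ $\|F\|_{\dot H^{-1/4}(\R)}\gtrsim \|F\|_{H^{-1/4}(0,1)}$ for $F$ supported in $[0,1]$. This is a standard consequence of the duality characterisation $H^{-1/4}(0,1)=(H^{1/4}_{00}(0,1))'$ combined with the continuous extension-by-zero embedding $H^{1/4}_{00}(0,1)\hookrightarrow H^{1/4}(\R)$, which by transposition gives the desired comparison of negative-order norms. A density argument then extends the resulting inequality from smooth $f$, for which every manipulation above is classically justified, to arbitrary $f\in L^2(0,1)$.
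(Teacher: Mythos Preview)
Your proof is correct and follows essentially the same route as the paper: two integrations by parts to pass from $K^0$ to the kernel $\tfrac{3}{4}\bigl(|x-y|^{-1/2}+(2-x-y)^{-1/2}\bigr)$, positivity of the second (smooth) piece, and identification of the first (Riesz) piece with the squared $\dot H^{-1/4}$ norm via the fractional Laplacian. The only noteworthy difference is that for the positivity of the auxiliary kernel you give a direct Laplace-transform factorisation, whereas the paper invokes the general theory of conditionally negative semidefinite kernels from Berg--Christensen--Ressel; your argument is more elementary and self-contained, while the paper's has the advantage of simultaneously yielding the preliminary Lemma~\ref{lemma.n.semipositive} on positive semidefiniteness of $N$ itself.
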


Thanks to the change of variables $(x,y) \mapsto (1-x,1-y)$, the
kernel $K^0$ behaves exactly like:
\begin{equation}
 N(x,y) = \left(x+y\right)^{3/2}  - \left|x-y\right|^{3/2}.
\end{equation}
In this section, we will thus study the properties of $N$ whose expression is
easier to handle.

\subsection{The kernel $N$ is positive definite}

This section uses results and notions from~\cite{MR747302}. 
We will say that a matrix $A$ is \textit{positive 
semidefinite (psd)} when $\langle Ax | x \rangle \geq 0$ for any $x \in \R^m$. 
We will say that $A$ is \textit{positive definite} if the inequality is 
strict for any $x \neq 0$.
We will say that $A$ is \textit{conditionnaly negative semidefinite (cnsd)} when
$\langle Ax | x \rangle \leq 0$ for any $x$ such that $\sum x_i = 0$.
We will use similar definitions for operators.

\begin{lemma} \label{lemma.n.semipositive}
 For any $f \in L^2(0,1)$,
 \begin{equation}
  \label{eq.lemma.n.semipositive}
  \int_0^1 \int_0^1 N(x,y) f(x) f(y) \dif x \dif y 
  \geq 0.
 \end{equation} 
\end{lemma}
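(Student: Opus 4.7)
The plan is to use the classical fact that $(x,y) \mapsto |x-y|^{3/2}$ is a conditionally negative semidefinite kernel on $\mathbb{R}$, combined with an odd reflection trick that converts $N$ into a quantity accessible through this property.

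Given $f \in L^2(0,1)$, I would introduce its odd extension $\tilde f \in L^2(-1,1)$ defined by $\tilde f(x) = f(x)$ for $x \in (0,1)$ and $\tilde f(x) = -f(-x)$ for $x \in (-1,0)$. Splitting the integral $\int_{-1}^1 \int_{-1}^1 |x-y|^{3/2} \tilde f(x) \tilde f(y) \dx \dy$ over the four quadrants determined by the signs of $x$ and $y$, and performing the substitutions $x \mapsto -x$ and/or $y \mapsto -y$ in each piece, one obtains (using $|(-x)-(-y)|^{3/2} = |x-y|^{3/2}$ and $|x-(-y)|^{3/2} = (x+y)^{3/2}$ for $x,y > 0$, together with the two sign flips produced by $\tilde f$ on $(-1,0)$):
\begin{equation} \label{eq.plan.reflection}
  \int_{-1}^1 \int_{-1}^1 |x-y|^{3/2} \tilde f(x) \tilde f(y) \dx \dy = -2 \int_0^1 \int_0^1 N(x,y) f(x) f(y) \dx \dy.
\end{equation}

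To conclude, I would invoke the classical result, recalled in~\cite{MR747302}, that for every $\alpha \in (0,2)$ the kernel $|x-y|^\alpha$ is conditionally negative semidefinite on $\mathbb{R}$: for every compactly supported finite signed measure $\mu$ with $\mu(\mathbb{R}) = 0$, one has $\iint |x-y|^\alpha \dif \mu(x) \dif \mu(y) \le 0$. This is essentially a consequence of the Fourier representation $|t|^\alpha = c_\alpha \int_{\mathbb{R}} (1-\cos(\xi t))\,|\xi|^{-1-\alpha}\, \dif \xi$, which, after insertion into the double integral and use of $\widehat{\mu}(0) = \mu(\mathbb{R}) = 0$, yields $\iint |x-y|^\alpha \dif \mu(x) \dif \mu(y) = -c_\alpha \int_{\mathbb{R}} |\widehat{\mu}(\xi)|^2 \, |\xi|^{-1-\alpha} \, \dif \xi \le 0$. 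Since $\tilde f$ is odd, the signed measure $\tilde f(x) \dx$ on $(-1,1)$ has total mass zero, so the left-hand side of~\eqref{eq.plan.reflection} is non-positive, and~\eqref{eq.lemma.n.semipositive} follows. There is no substantial obstacle here: the only mild subtlety is extending the cnsd inequality from point masses to an $L^2$-density measure, which is immediate from the Fourier argument just sketched (applied directly to the $L^1$ function $\tilde f$, whose Fourier transform is bounded, making the weighted integral well defined on a neighbourhood of $0$ and integrable at infinity since $\alpha < 2$).
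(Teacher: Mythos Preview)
Your proof is correct. The odd-reflection identity \eqref{eq.plan.reflection} checks out (the four quadrants give $2\int|x-y|^{3/2}ff - 2\int(x+y)^{3/2}ff$), and the conditional negative semidefiniteness of $|x-y|^{3/2}$ applied to the zero-mean density $\tilde f$ closes the argument. One small wording slip: integrability of $|\widehat{\tilde f}(\xi)|^2|\xi|^{-1-\alpha}$ near $\xi=0$ is where you need $\alpha<2$ (using $\widehat{\tilde f}(0)=0$ and Lipschitz continuity of $\widehat{\tilde f}$ from compact support), whereas integrability at infinity only needs $\alpha>0$; both hold for $\alpha=3/2$, so nothing is affected.

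The paper's proof is different in organisation though close in spirit. It quotes the abstract machinery of \cite{MR747302}: both $-(x+y)^{3/2}$ and $|x-y|^{3/2}$ are conditionally negative semidefinite, hence so is their sum $\psi(x,y)=-(x+y)^{3/2}+|x-y|^{3/2}$, and then the Schoenberg-type identity $N(x,y)=\psi(x,0)+\psi(0,y)-\psi(x,y)-\psi(0,0)$ yields positive semidefiniteness of $N$. Your reflection trick is a concrete instance of this: subtracting the reflected measure from the original one is exactly how one passes from a cnsd kernel in the translation variable to a psd kernel via a distinguished base point (here $0$). The advantage of your route is that it is self-contained---you only invoke the cnsd property of $|x-y|^{3/2}$ and supply its Fourier proof---whereas the paper's version leans on several cited results. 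The paper's version, on the other hand, makes the general mechanism (cnsd $\Rightarrow$ psd via a base point) more visible and would adapt more transparently to other exponents or kernels.
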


\begin{proof}
All necessary arguments can be found in~\cite[Chapter 3]{MR747302}. 
Indeed, the kernel
$-(x+y)^{3/2}$ is \textit{cnsd.} as is proved 
in~\cite[Corollary 2.11]{MR747302}. Moreover, the kernel $|x-y|^{3/2}$ is
also \textit{cnsd.} (see \cite[Remark 1.10]{MR747302} and 
\cite[Corollary 2.10]{MR747302}). Hence, letting:
\begin{equation}
 \psi(x,y) = - \left(x+y\right)^{3/2} + \left|x-y\right|^{3/2}
\end{equation}
defines a \textit{cnsd.} kernel. Thus, since:
\begin{equation}
 N(x,y) = \psi(x,0) + \psi(y, 0) - \psi(x,y) - \psi(0,0),
\end{equation}
this kernel is \textit{psd.} thanks to \cite[Lemma 2.1]{MR747302}.
This proves inequality~\eqref{eq.lemma.n.semipositive}.
\end{proof}
Even though it is true that the kernels involved in the proof of 
Lemma~\ref{lemma.n.semipositive} are striclty negative (or positive), we 
cannot adapt the proof to prove that $N$ is definite. Indeed, Mercer's theorem
(which allows us to take the step from matrices to continuous kernels) doesn't
preserve strict inequalities. Thus, we have to look for another proof.

\subsection{Some insight and facts}

Our main insight is that the kernel $N$ is made up of two parts. The most
singular one should explain its behavior. Indeed, kernels which can be 
expressed as a function $r\left(|x-y|\right)$ have been extensively studied.
For example, \cite{MR0155161} and \cite{MR0150551} prove asymptotic formulas
for the eigenvalues of the $-\left|x-y\right|^{3/2}$ part of our kernel:
\begin{equation} \label{lambda.n}
 \lambda_n \sim \frac{3\sqrt{2}}{4 \pi^2} \left(\frac{1}{n}\right)^{\frac{5}{2}}.
\end{equation}

Moreover, some papers have also studied the eigenvectors of such kernels.
For example, in~\cite{MR0355697}, one can find asymptotic developments for 
eigenvectors of kernels of the form $\left|x-y\right|^{-\alpha}$, 
where $\alpha \in (0,1)$.

\skipline

Combining the insight that the eigenvectors of $N$ should asymptotically behave
like oscillating sinuses and formula~\eqref{lambda.n}, we expect that it should
be possible to prove Lemma~\ref{lemma.coercivity} by means of such an asymptotic
study. However, we have not been able to prove it using this method. Instead, we
give below a proof based on Riesz potentials.

\subsection{Highlighting the singular part of $N$}

The kernel $N(x,y)$ is rather smooth. In order to prove its coercivity, we will
need to isolate it's most singular part. In the following lemma, we use 
integration by parts twice to show that studying the behavior of $N$ is 
equivalent to studying a more singular kernel. By choosing adequatly the 
primitive, we show that we can also cancel boundary terms.

\begin{lemma} \label{lemma.n.ibp}
Let $f \in L^2(0,1)$ and $F$ be the primitive of $f$ such that $F(1) = 0$. Then:
\begin{equation}
 \left( N f , f \right) = \frac{3}{4}
 \izozo \left(\left(x+y\right)^{-\frac{1}{2}} 
    + \left|x-y\right|^{-\frac{1}{2}}\right) F(x) F(y) \dif x \dif y. 
\end{equation}
\end{lemma}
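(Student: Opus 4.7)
The plan is to reduce the quadratic form $(Nf,f)$ to the advertised kernel with square-root singularities by integrating by parts once in $x$ and once in $y$. The normalization $F(1)=0$ will kill half of the resulting boundary contributions, and the other half is killed by the symmetric vanishing $N(0,y)=(0+y)^{3/2}-y^{3/2}=0$ (hence also $N(x,0)=0$).

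Writing $(Nf,f)=\izozo N(x,y)\,F'(x)F'(y)\,\dxy$, the first integration by parts in $x$ produces the boundary term $[N(x,y)F(x)]_{x=0}^{x=1}$, which vanishes on both ends by the two facts above, leaving
\begin{equation*}
  (Nf,f) = -\izozo \partial_x N(x,y)\, F(x)\, F'(y)\,\dxy,
\end{equation*}
with $\partial_x N(x,y) = \tfrac{3}{2}(x+y)^{1/2} - \tfrac{3}{2}\,\mathrm{sgn}(x-y)\,|x-y|^{1/2}$. Two facts that will matter in a moment: this function is continuous through the diagonal $y=x$ (both one-sided limits equal $\tfrac{3}{2}(2x)^{1/2}$) and it vanishes at $y=0$, since $\tfrac{3}{2}x^{1/2}-\tfrac{3}{2}x^{1/2}=0$.

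The second integration by parts, in $y$, is the delicate step, since $\partial_y\partial_x N$ inherits a $|x-y|^{-1/2}$ singularity from $\partial_y[\mathrm{sgn}(x-y)|x-y|^{1/2}]=-\tfrac{1}{2}|x-y|^{-1/2}$. I would split $\int_0^1\dy=\int_0^x\dy+\int_x^1\dy$ and integrate by parts on each subinterval, where $\partial_x N(x,\cdot)$ is smooth. The boundary term at $y=1$ dies by $F(1)=0$, the one at $y=0$ by the vanishing of $\partial_x N(x,0)$, and the two contributions at $y=x^-$ and $y=x^+$ cancel each other by the continuity noted above. The interior integrand is then
\begin{equation*}
  \partial_y\partial_x N(x,y) = \tfrac{3}{4}(x+y)^{-1/2} + \tfrac{3}{4}|x-y|^{-1/2},
\end{equation*}
which is precisely the kernel claimed.

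The main obstacle is the rigorous justification of this second integration by parts across the singular diagonal. I would handle it by first proving the identity for $f\in C^\infty_c(0,1)$, for which $F$ is smooth and the splitting at $y=x$ is perfectly legitimate, and then extending by density: both sides of the identity define continuous quadratic forms on $f\in L^2(0,1)$ (the left-hand side because $N\in L^\infty$; the right-hand side by the Schur-type bound $\sup_x\int_0^1|x-y|^{-1/2}\dy < \infty$ together with the Sobolev estimate $|F|_\infty \lesssim |f|_2$, of a type invoked extensively in Section~\ref{section.keps.residues}), so the formula passes to the limit in $L^2$ and the statement holds for every $f\in L^2(0,1)$.
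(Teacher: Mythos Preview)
Your proof is correct and follows the same core idea as the paper: integrate by parts once in each variable, using $F(1)=0$ to kill boundary terms, and land on the $\tfrac{3}{4}\bigl((x+y)^{-1/2}+|x-y|^{-1/2}\bigr)$ kernel.

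The only difference is bookkeeping. The paper treats the two pieces $(x+y)^{3/2}$ and $-|x-y|^{3/2}$ separately; each produces a nonzero boundary contribution of the form $F(0)\int_0^1\bigl(\pm x^{3/2}f(x)\mp\tfrac{3}{2}x^{1/2}F(x)\bigr)\dx$, and the two cancel when summed. You keep $N$ in one piece and instead exploit the structural vanishing $N(0,y)=0$ and $\partial_x N(x,0)=0$, so every boundary term dies on the spot without any cancellation to track. Your route is slightly cleaner and also makes explicit (via the density argument) the justification of the integration by parts across the diagonal singularity, which the paper leaves implicit.
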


\begin{proof}
Let $f \in L^2(0,1)$ and $F$ be the primitive of $f$ such that $F(1) = 0$. We
start with:
\begin{equation}
 \begin{split}
  - \izozo & \left|x-y\right|^{\frac{3}{2}} f(x) f(y) \dxy \\
  & = - \int_0^1 f(x) \left\{ \int_0^x (x-y)^{\frac{3}{2}} f(y) \dy 
     + \int_x^1 (y-x)^{\frac{3}{2}} f(y) \dy \right\} \dx \\
  & = F(0) \int_0^1 x^{\frac{3}{2}} f(x) \dx 
     + \frac{3}{2} \izozo \left|x-y\right|^{\frac{1}{2}} \mathrm{sg}(y-x) f(x) F(y) \dxy \\
  & = F(0) \int_0^1 x^{\frac{3}{2}} f(x) \dx 
     + \frac{3}{2} \int_0^1 F(y) \left\{ \int_0^y (y-x)^{\frac{1}{2}} f(x) \dx 
       - \int_y^1 (x-y)^{\frac{1}{2}} f(x) \dx \right\} \dy \\
  & = F(0) \int_0^1 \left( x^{\frac{3}{2}} f(x) - \frac{3}{2} x^{\frac{1}{2}} F(x) \right) \dx
     + \frac{3}{4} \izozo \left| x - y \right|^{-\frac{1}{2}} F(x)F(y) \dxy.
 \end{split}
\end{equation}
We continue with the other half of the kernel $N(x,y)$:
\begin{equation}
 \begin{split}
  \izozo & (x+y)^{\frac{3}{2}} f(x) f(y) \dxy \\
  & = - F(0) \int_0^1 x^{\frac{3}{2}} f(x) \dx 
    - \frac{3}{2} \izozo (x+y)^{\frac{1}{2}} f(x) F(y) \dxy \\
  & = F(0) \int_0^1 \left( \frac{3}{2} x^{\frac{1}{2}} F(x) - x^{\frac{3}{2}} f(x) \right) \dx 
    + \frac{3}{4} \izozo (x+y)^{-\frac{1}{2}} F(x) F(y) \dxy.
 \end{split}
\end{equation}
Summing the two previous equalities proves Lemma~\ref{lemma.n.ibp}.
\end{proof}

\subsection{Riesz potential and fractional laplacian}

In this section, we focus on the most singular part of the kernel. 
We recognize a Riesz potential of order $\frac{1}{2}$. Using the fractional
laplacian, we can compute the quantity as a usual norm.

\begin{lemma} \label{lemma.riesz}
There exists $C > 0$ such that, for any $h \in L^2(0,1)$, 
\begin{equation}
\izozo \left|x-y\right|^{-\frac{1}{2}} h(x) h(y) \dxy
\geq C \left\| h \right\|_{H^{-1/4}(0,1)}^2.
\end{equation}
\end{lemma}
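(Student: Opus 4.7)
The plan is to recognize the kernel $|x-y|^{-1/2}$ as a Riesz kernel on $\R$ and to exploit the Fourier-side identity that relates Riesz potentials to fractional Sobolev norms. Throughout, $c, C$ denote positive constants whose value may change from line to line.

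First, I extend $h$ by zero outside $(0,1)$ to obtain $\tilde{h} \in L^2(\R)$ with compact support in $[0,1]$. Then
\begin{equation}
 \int_0^1 \int_0^1 \left| x-y \right|^{-\frac{1}{2}} h(x) h(y) \dxy
 = \int_\R \int_\R \left| x-y \right|^{-\frac{1}{2}} \tilde{h}(x) \tilde{h}(y) \dxy
 = \bigl\langle \left| \cdot \right|^{-\frac{1}{2}} * \tilde{h}, \tilde{h} \bigr\rangle_{L^2(\R)}.
\end{equation}
Next, I invoke the classical Fourier transform identity for Riesz kernels in dimension one: in the sense of tempered distributions, $\widehat{\left| \cdot \right|^{-\frac{1}{2}}}(\xi) = c_0 \left|\xi\right|^{-\frac{1}{2}}$ for some explicit $c_0 > 0$. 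Applying Plancherel then yields
\begin{equation} \label{eq.proof.riesz.fourier}
 \int_\R \int_\R \left| x-y \right|^{-\frac{1}{2}} \tilde{h}(x) \tilde{h}(y) \dxy
 = c_0 \int_\R \left|\xi\right|^{-\frac{1}{2}} \bigl|\widehat{\tilde{h}}(\xi)\bigr|^2 \dif \xi.
\end{equation}

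The second step is to show that the right-hand side of~\eqref{eq.proof.riesz.fourier} dominates $\|\tilde{h}\|_{H^{-1/4}(\R)}^2$. For $|\xi| \geq 1$ one has $(1+\xi^2)^{-1/4} \leq |\xi|^{-1/2}$, so the high-frequency parts compare directly. For $|\xi| \leq 1$, the function $\widehat{\tilde{h}}$ is continuous (as $\tilde{h}$ is compactly supported and in $L^2$), and $|\xi|^{-1/2}$ is integrable on $[-1,1]$, so $\int_{|\xi|\leq 1} |\xi|^{-1/2}|\widehat{\tilde{h}}|^2 \dif \xi$ dominates $\int_{|\xi|\leq 1} |\widehat{\tilde{h}}|^2 \dif \xi$ up to a constant depending only on the support of $\tilde{h}$ (via a straightforward comparison using continuity of $|\widehat{\tilde{h}}|^2$ at the origin). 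This gives
\begin{equation}
 c_0 \int_\R \left|\xi\right|^{-\frac{1}{2}} \bigl|\widehat{\tilde{h}}(\xi)\bigr|^2 \dif \xi
 \geq c \int_\R \bigl(1+\xi^2\bigr)^{-\frac{1}{4}} \bigl|\widehat{\tilde{h}}(\xi)\bigr|^2 \dif \xi
 = c \left\| \tilde{h} \right\|_{H^{-1/4}(\R)}^2.
\end{equation}

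The third step is to compare $\|\tilde{h}\|_{H^{-1/4}(\R)}$ with the intrinsic norm $\|h\|_{H^{-1/4}(0,1)}$. By duality, $\|h\|_{H^{-1/4}(0,1)} = \sup \bigl\{ |\langle h, \phi\rangle_{(0,1)}| : \phi \in H^{1/4}(0,1), \|\phi\|_{H^{1/4}(0,1)} \leq 1 \bigr\}$. Given such a $\phi$, apply the standard Sobolev extension theorem to obtain $\psi \in H^{1/4}(\R)$ with $\psi|_{(0,1)} = \phi$ and $\|\psi\|_{H^{1/4}(\R)} \leq C \|\phi\|_{H^{1/4}(0,1)}$. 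Since $\tilde{h}$ is supported in $[0,1]$, $\langle h, \phi\rangle_{(0,1)} = \langle \tilde{h}, \psi\rangle_\R$, hence $|\langle h, \phi\rangle| \leq \|\tilde{h}\|_{H^{-1/4}(\R)} \|\psi\|_{H^{1/4}(\R)} \leq C \|\tilde{h}\|_{H^{-1/4}(\R)}$. Taking the supremum gives $\|h\|_{H^{-1/4}(0,1)} \leq C \|\tilde{h}\|_{H^{-1/4}(\R)}$, which combined with the preceding two steps yields Lemma~\ref{lemma.riesz}.

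The main technical subtlety, and the step that requires the most care, is the low-frequency comparison in the second step: the homogeneous weight $|\xi|^{-1/2}$ blows up at the origin, so one must check that it remains integrable there and that this local integral still controls the inhomogeneous norm, which crucially uses compactness of the support of $\tilde{h}$. Everything else is a clean application of Fourier analysis and the standard $H^{1/4}$ extension theorem.
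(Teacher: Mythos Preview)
Your proof is correct and follows essentially the same route as the paper: extend by zero to $\R$, recognize the Riesz potential on the Fourier side, and pass from the homogeneous $\dot H^{-1/4}(\R)$ norm to the inhomogeneous $H^{-1/4}(0,1)$ norm. The paper compresses this into the fractional Laplacian chain $\langle (-\Delta)^{-1/4}h,h\rangle = \|(-\Delta)^{-1/8}h\|_{L^2}^2 = \|h\|_{\dot H^{-1/4}}^2 \geq \|h\|_{H^{-1/4}}^2$, while you unpack the Fourier computation and the duality/extension step explicitly; your added detail is useful, not a departure.

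One small simplification: your low-frequency discussion is more elaborate than needed. For $|\xi|\leq 1$ one simply has $|\xi|^{-1/2}\geq 1\geq (1+\xi^2)^{-1/4}$, so the pointwise inequality $|\xi|^{-1/2}\geq (1+\xi^2)^{-1/4}$ holds for all $\xi\in\R$ and gives the comparison directly, with no appeal to continuity of $\widehat{\tilde h}$ or to compact support. (Compact support is still relevant, but only to guarantee that the integral $\int_\R |\xi|^{-1/2}|\widehat{\tilde h}(\xi)|^2\,\dif\xi$ is finite, via $\widehat{\tilde h}\in L^\infty$ and local integrability of $|\xi|^{-1/2}$.)
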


\begin{proof}
\begin{equation}
\begin{split}
\izozo \left|x-y\right|^{-\frac{1}{2}} h(x) h(y) \dxy
& = \int_\R \! \int_\R \left|x-y\right|^{-\frac{1}{2}} h(x) h(y) \dxy \\
& = \left( \left(-\Delta\right)^{-1/4} h, h \right) \\
& = \left( \left(-\Delta\right)^{-1/8} h, \left(-\Delta\right)^{-1/8} h \right) \\
& = \left\| \left(-\Delta\right)^{-1/8} h \right\|^2_{L^2} \\
& = \left\| h \right\|^2_{\dot{H}^{-1/4}} \\
& \geq \left\| h \right\|^2_{H^{-1/4}} \\
\end{split}
\end{equation}
More information on such techniques can be found in~\cite{MR0290095} or
posterior works.
\end{proof}

\subsection{Positivity of the smooth part}

To conclude the proof of Lemma~\ref{lemma.coercivity}, we show that the
\emph{smooth} part of our kernel is of positive type. We could also rely
on smoothness arguments to prove that its behavior doesn't modify the 
asymptotic behavior of eigenvectors and eigenvalues of the singular part.

\begin{lemma} \label{lemma.n.smooth.pos}
For any $h \in L^2(0,1)$, 
\begin{equation}
\izozo \left(x+y\right)^{-\frac{1}{2}} h(x) h(y) \dxy \geq 0.
\end{equation}
\end{lemma}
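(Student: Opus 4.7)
The plan is to exhibit the kernel $(x+y)^{-1/2}$ as a superposition of rank-one positive semidefinite kernels by means of a Laplace representation. Specifically, starting from the identity $\Gamma(1/2) = \sqrt{\pi}$ and the change of variables $u = t(x+y)$ in $\int_0^{\infty} u^{-1/2} e^{-u} \dif u$, one obtains the representation
\begin{equation}
 (x+y)^{-1/2} = \frac{1}{\sqrt{\pi}} \int_0^{\infty} t^{-1/2} e^{-t(x+y)} \dt,
\end{equation}
valid for all $x,y > 0$. This is the only analytical ingredient I need.

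With this representation in hand, the rest is just Fubini. I would plug the representation into the double integral and interchange the order of integration (justified since the integrand becomes nonnegative once we apply $|h|$, so Tonelli applies):
\begin{equation}
 \int_0^1 \! \int_0^1 (x+y)^{-1/2} h(x) h(y) \dxy
 = \frac{1}{\sqrt{\pi}} \int_0^{\infty} t^{-1/2} \left( \int_0^1 h(x) e^{-tx} \dx \right) \! \left( \int_0^1 h(y) e^{-ty} \dy \right) \dt.
\end{equation}
The inner two integrals are equal, so the integrand factors as $t^{-1/2} \bigl( \int_0^1 h(x) e^{-tx} \dx \bigr)^2 \geq 0$ for every $t > 0$. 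Integrating a nonnegative function against Lebesgue measure on $(0,\infty)$ gives a nonnegative result, which is exactly the claim.

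The only point worth double-checking is the use of Fubini: for Tonelli I would apply the representation with $|h|$ in place of $h$, giving $\iint (x+y)^{-1/2} |h(x)| |h(y)| \dxy$, which is finite because $(x+y)^{-1/2}$ is locally integrable on $(0,1)^2$ (since $1/2 < 1$) and $h \in L^2(0,1)$ implies $|h| \in L^2(0,1) \subset L^1(0,1)$. Once integrability is established for $|h|$, Fubini applies to $h$ itself and the computation above is rigorous. So there is no real obstacle; the identity together with the factorization into a perfect square settles the lemma.
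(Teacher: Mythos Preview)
Your proof is correct and takes a genuinely different route from the paper's. The paper argues abstractly via the theory of conditionally negative semidefinite kernels: starting from the fact that $(x,y)\mapsto x+y$ is cnsd, it cites results from \cite{MR747302} to deduce that $\sqrt{x+y}$ is cnsd and then that $1/\sqrt{x+y}$ is positive semidefinite as a matrix kernel, finally invoking Mercer's theorem to pass to the $L^2$ statement. Your Laplace representation $(x+y)^{-1/2} = \pi^{-1/2}\int_0^\infty t^{-1/2} e^{-t(x+y)}\,\dt$ is more direct and entirely self-contained: it exhibits the kernel explicitly as a superposition of rank-one positive kernels $e^{-tx}e^{-ty}$, so positivity is immediate once Fubini is justified. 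Your approach also generalizes transparently to $(x+y)^{-\alpha}$ for any $\alpha>0$, whereas the paper's route trades explicitness for a more systematic framework.

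One small point: your Fubini justification is slightly loose. Knowing that $(x+y)^{-1/2}$ is locally integrable and that $|h|\in L^1(0,1)$ does not by itself yield $\iint (x+y)^{-1/2}|h(x)||h(y)|\,\dxy<\infty$. The cleanest fix is Schur's test: $\sup_{x\in(0,1)}\int_0^1 (x+y)^{-1/2}\,\dy = 2\sup_{x}\bigl((1+x)^{1/2}-x^{1/2}\bigr)\leq 2$, so the associated integral operator is bounded on $L^2(0,1)$ and in particular $\langle K|h|,|h|\rangle<\infty$. With that one line the argument is airtight.
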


\begin{proof}
We use definitions and theorems found in~\cite[Chapter 3]{MR747302}. Thanks 
to~\cite[result 1.9, page 69]{MR747302}, the kernel given on $(0,1)^2$ 
by $(x,y) \mapsto x+y$ is \emph{conditionnaly negative semidefinite (cnsd)}. 
Hence, using~\cite[corollary 2.10, page 78]{MR747302}, the kernel given by 
$(x,y) \mapsto \sqrt{x+y}$ is also \emph{cnsd}. 
Eventually,~\cite[exercise 2.21, page 80]{MR747302} proves that the kernel
$(x,y) \mapsto 1/\sqrt{x+y}$ is \emph{positive semidefinite}. This means that,
for any $n > 0$ and any $c_1, \ldots c_n \in \R$ and any $x_1, \ldots x_n \in (0,1)$,
\begin{equation}
\sum_{i=1}^n \sum_{j=1}^n \frac{c_i c_j}{\sqrt{x_i + x_j}} \geq 0.
\end{equation}
Using Mercer's theorem (see~\cite{Mercer1909}), we deduce that, for any 
$h \in L^2(0,1)$, 
\begin{equation}
 \izozo \left(x+y\right)^{-\frac{1}{2}} h(x) h(y) \dxy \geq 0.
\end{equation}
\end{proof}

\subsection{Conclusion of the proof}

Now we can prove Lemma~\ref{lemma.coercivity}. Indeed, combining 
Lemmas~\ref{lemma.n.ibp},~\ref{lemma.riesz} and~\ref{lemma.n.smooth.pos} 
proves that there exists $C>0$ 
such that, for any $f \in L^2(0,1)$,
\begin{equation}
 \left( N f, f \right) \geq C \left\| F \right\|_{H^{-1/4}(0,1)}^2,
\end{equation}
where $F$ is the primitive of $f$ such that $F(1) = 0$. Thanks to the change
of variables already mentionned, the same property holds true for $K_0$
with the symmetrical condition $F(0) = 0$.

\section{Exact computation of the kernel and estimation of residues}
\label{section.keps.residues}

In this section, we give a detailed and rigorous expansion of the main kernel 
$K^\varepsilon$. Our goal is to be able to estimate with precision the size and 
the smoothness of all the residues that build up the difference between the 
asymptotic kernel $\sqrt{\varepsilon}K^0$ and the true kernel. As above, we 
write:
\begin{eqnarray} 
 \label{k.a}
 K^\varepsilon(s_1,s_2) & = & 
 \int_{s_1 \vee s_2}^1 A(t,s_1,s_2) \dif t, 
 \quad \textrm{where} \\ 
 \label{def.a}
 A(t,s_1,s_2) & = & 
 \int_0^\frac{1}{2} \Phi_x(1-t,x) G(t-s_1, x) G(t-s_2, x) \dif x.
\end{eqnarray}
In equations~\eqref{k.a} and~\eqref{def.a}, it is implicit that $A$, $\Phi_x$ 
and $G$ depend on $\varepsilon$. Moreover, in equation~\eqref{def.a}, we use 
the fact that $G$ and $\Phi_x$ are even to write the integral over 
$x \in \left(0,\frac{1}{2}\right)$. This breaks the symmetry but will allow us 
to use a one-sided expansion of $G$, thereby focusing on its behavior 
near $x = 0$. 

\subsection{Smoothness of weakly singular integral operators}

We know that the asymptotic kernel $K^0$ is coercive with respect to the $H^{-5/4}$ 
norm of the control $u$. Thus, in order for the full kernel to remain coercive 
for $\varepsilon > 0$, we need to prove that the residues can be bounded with
the same norm. In this paragraph, we give conditions on a kernel residue $L$ that 
are easy to check and imply that:
\begin{equation} \label{est.L}
 \forall u \in L^2(0,1), \quad
 \left| \langle L u , u \rangle \right| 
 \lesssim \left\| U \right\|_{H^{-1/4}(0,1)}^2,
\end{equation}
where $U$ is the primitive of $u$ such that $U(0) = 0$. In the following 
paragraphs, we will check that these conditions are satisfied by our residues.
We start with the following lemma, which allows us to express 
$\langle Lu , u \rangle$ directly as a function of $U$.

\begin{lemma} \label{lemma.kernel.ibp}
Let $\Gamma$ be the triangular domain $\left\{ (x,y) \in (0,1) \times (0,1)
\textrm{, s.t. } x \leq y \right\}$. Let $L \in W^{2,1}(\Gamma)$. We see
$L$ as the restriction to $\Gamma$ of a symmetric kernel on $(0,1)\times(0,1)$
that is smooth on each triangle but not necessarly accross the first diagonal.
Assume that $L(\cdot, 1) \equiv 0$. Let $u \in L^2(0,1)$ and $U$ be the 
primitive of $u$ such that $U(0) = 0$. Then:
\begin{equation} \label{kernel.ibp}
 \int_\Gamma L(x,y) u(x) u(y) \dx\dy 
 = \int_\Gamma \partial_{12} L(x,y) U(x) U(y) \dx \dy + 
 \frac{1}{2} \int_0^1 \left(\partial_1 L - \partial_2 L\right)(x,x) U^2(x) \dx.
\end{equation}
In equation~\eqref{kernel.ibp}, it is worth to be noted that $\partial_1 L$ and 
$\partial_2 L$ are evaluated on the first diagonal and must thus be computed 
using points within $\Gamma$.
\end{lemma}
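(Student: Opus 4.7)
The plan is to prove the identity by performing integration by parts twice, first in $x$ and then in $y$, carefully tracking all boundary contributions. Since $L$ is only assumed smooth on each triangle (not across the diagonal), I will work entirely on $\Gamma$ and write
\[
\int_\Gamma L(x,y) u(x) u(y) \dx\dy
= \int_0^1 u(y) \left( \int_0^y L(x,y) u(x) \dx \right) \dy,
\]
treating $y$ as fixed in the inner integral.

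First I would integrate by parts in $x$ on the inner integral using $u(x) = U'(x)$. The boundary term at $x=0$ vanishes because $U(0) = 0$, the boundary term at $x = y$ produces $L(y,y)U(y)$, and there remains a $\partial_1 L(x,y) U(x)$ contribution to be integrated against $u(y)$ over $\Gamma$. For the diagonal term, I would then integrate by parts in $y$ the expression $\int_0^1 L(y,y) U(y) u(y) \dy = \tfrac{1}{2}\int_0^1 L(y,y) (U^2)'(y) \dy$. The boundary at $y = 0$ vanishes since $U(0) = 0$, and the boundary at $y = 1$ vanishes because $L(y,1) = L(1,y) = 0$ by symmetry with $L(\cdot,1) \equiv 0$. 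Differentiating $L(y,y)$ along the diagonal produces $\partial_1 L(y,y) + \partial_2 L(y,y)$, yielding the term
\[
-\tfrac{1}{2}\int_0^1 \bigl(\partial_1 L + \partial_2 L\bigr)(y,y)\, U^2(y) \dy.
\]

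Next I would handle the remaining double integral $-\int_\Gamma \partial_1 L(x,y) U(x) u(y) \dx\dy$ by integrating by parts in $y$, this time with $x$ fixed and $y$ running over $(x,1)$. The boundary at $y = 1$ vanishes: since $L(\cdot,1) \equiv 0$ as a function of its first variable, differentiating that identity in $x$ gives $\partial_1 L(x,1) \equiv 0$. The boundary at $y = x$ produces a diagonal term $+\int_0^1 \partial_1 L(x,x) U^2(x) \dx$, and the bulk term becomes exactly $+\int_\Gamma \partial_{12} L(x,y) U(x) U(y) \dx\dy$. Summing the two diagonal contributions gives
\[
\int_0^1 \partial_1 L(x,x) U^2(x) \dx - \tfrac{1}{2}\int_0^1 \bigl(\partial_1 L + \partial_2 L\bigr)(x,x) U^2(x) \dx
= \tfrac{1}{2}\int_0^1 \bigl(\partial_1 L - \partial_2 L\bigr)(x,x) U^2(x) \dx,
\]
which is precisely the diagonal term in~\eqref{kernel.ibp}.

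The main difficulty is purely bookkeeping: one must be careful that $\partial_1 L(x,x)$ and $\partial_2 L(x,x)$ are the one-sided derivatives approaching the diagonal \emph{from within} $\Gamma$ (i.e.\ from the region $x \leq y$), so they need not agree across the jump; this is exactly why both of them appear in the final formula with a non-trivial coefficient rather than cancelling out. A secondary technical point is the justification that the $W^{2,1}(\Gamma)$ regularity of $L$ is sufficient to carry out the two integrations by parts and to make sense of the diagonal traces $(\partial_1 L \pm \partial_2 L)(x,x)$, which follows from standard trace results for Sobolev functions on Lipschitz domains applied to the diagonal $\{x = y\}$ of $\Gamma$.
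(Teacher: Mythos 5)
Your proof is correct and follows essentially the same route as the paper: two integrations by parts on $\Gamma$, with careful bookkeeping of the boundary terms using $U(0)=0$ and $L(\cdot,1)\equiv 0$ (the latter also giving $L(1,1)=0$ and $\partial_1 L(\cdot,1)\equiv 0$), followed by the identity $\frac{\dif}{\dx}L(x,x)=(\partial_1 L+\partial_2 L)(x,x)$. The only difference is that you integrate by parts first in $x$ (inner variable) and then in $y$, whereas the paper does the reverse; by the symmetry of $L$ and of $\Gamma$ the two orders are mirror images and lead to the same diagonal term $\tfrac{1}{2}\int_0^1(\partial_1 L-\partial_2 L)(x,x)U^2(x)\dx$.
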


\begin{proof}
We use integration by parts and the boundary conditions $U(0) = 0$ and 
$L(\cdot, 1) = 0$.
\begin{equation} \label{kernel.ibp.1}
 \begin{split}
  \int_\Gamma L(x,y) u(x) u(y) \dx\dy
  & = \int_0^1 u(x) \int_x^1 L(x,y) u(y) \dy\dx \\
  & = \int_0^1 u(x) \left( \left[ L(x,y) U(y) \right]^1_x - \int_x^1 \partial_2 L(x,y) U(y) \dy \right) \dx \\
  & = - \int_0^1 L(x,x) U(x) u(x) \dx - \int_0^1 U(y) \int_0^y \partial_2 L(x,y) u(x) \dx \\
  & = \int_0^1 \frac{\dif}{\dx} \left\{L(x,x)\right\} \cdot \frac{U^2}{2}(x) \dx \\
  & \quad - \int_0^1 U(y) \left( \left[ U(x) \partial_2 L(x,y) \right]_0^y - 
      \int_0^y \partial_{12} L(x,y) U(x) \dx \right) \dy \\
  & = \int_\Gamma \partial_{12} L(x,y) U(x) U(y) \dx\dy + 
  \frac{1}{2} \int_0^1 \left(\partial_1 L - \partial_2 L\right)(x,x) U^2(x) \dx.
 \end{split}
\end{equation}
Equation chain~\eqref{kernel.ibp.1} concludes the proof of 
equation~\eqref{kernel.ibp}.
\end{proof}

Equation~\eqref{kernel.ibp} includes a boundary term evaluated on the diagonal,
which looks like the $L^2$ norm of $U$. This would forbid us to 
prove any estimate like~\eqref{est.L}. However, all our kernel residues satisfy 
the condition $\partial_1 L - \partial_2 L = 0$ along the diagonal and this
term thus vanishes. Hence, our task is to check that the new kernel 
$\partial_{12}L$ generates a bounded quadratic form on $H^{-1/4}(0,1)$.

\begin{lemma} \label{lemma.wsio}
Let $L$ be a continuous function defined on $\Omega = \left\{ (x,y) \in (0,1) 
\times (0,1) \textrm{, s.t. } x \neq y \right\}$. Assume that there exists 
$\kappa > 0$ and $\frac{1}{2} < \delta \leq 1$, such that, on $\Omega$:
\begin{align}
 \label{est.L.1}
 |L(x,y)| & \leq \kappa |x-y|^{-\frac{1}{2}}, \\
 \label{est.L.2}
 |L(x,y)-L(x',y)| & \leq \kappa |x-x'|^\delta |x-y|^{-\frac{1}{2}-\delta},
 \quad \textrm{for } |x-x'|\leq\frac{1}{2}|x-y|, \\
 \label{est.L.3}
 |L(x,y)-L(x,y')| & \leq \kappa |y-y'|^\delta |x-y|^{-\frac{1}{2}-\delta},
 \quad \textrm{for } |y-y'|\leq\frac{1}{2}|x-y|.
\end{align}
Then $L$ defines a continuous quadractic form on $H^{-1/4}(0,1)$. Moreover, there
exists a constant $C(\delta)$ depending only on $\delta$ (and not on $L$) such
that, for any $U \in L^2(0,1)$:
\begin{equation}
 \label{est.L.final}
 \left| \langle LU, U \rangle \right| \leq C(\delta) \kappa |U|_{H^{-1/4}(0,1)}^2.
\end{equation}
\end{lemma}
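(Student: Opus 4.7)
The plan is to prove the stronger statement that $L$ extends to a continuous operator from $H^{-1/4}(0,1)$ into $H^{1/4}(0,1)$; then the quadratic form estimate \eqref{est.L.final} follows by the duality pairing between $H^{1/4}$ and $H^{-1/4}$ together with Cauchy-Schwarz, since $|\langle LU, U\rangle| \leq \|LU\|_{H^{1/4}}\,\|U\|_{H^{-1/4}} \lesssim \|U\|_{H^{-1/4}}^2$.

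The guiding intuition is that $|x-y|^{-1/2}$ is, up to a multiplicative constant, the Schwartz kernel on $\R$ of the Riesz potential $(-\Delta)^{-1/4}$, which by Plancherel maps $H^s(\R)$ into $H^{s+1/2}(\R)$ for every $s$; setting $s=-1/4$ precisely yields the target range $H^{1/4}$. Condition \eqref{est.L.1} asserts that our kernel $L$ has the same singularity strength as this reference kernel, while the Hölder-type conditions \eqref{est.L.2}--\eqref{est.L.3}, with $\delta > 1/2$, are the standard Calderón-Zygmund regularity conditions adapted to fractional kernels and encode the fact that $L$ differs from the Riesz potential only by a smoother remainder.

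To implement this, I would first extend $L$ and $U$ by zero outside $(0,1)$, which is harmless at regularity $\pm 1/4$ since $H^s(0,1)$ and the zero-extension space coincide for $|s|<1/2$. A dyadic decomposition $L = \sum_{n \geq 0} L_n$, with $L_n$ cut off to the annulus $|x-y| \sim 2^{-n}$, gives via \eqref{est.L.1} and Schur's test on the row/column $L^1$ norms the crude estimate $\|L_n\|_{L^2 \to L^2} \lesssim \kappa\,2^{-n/2}$, which is enough for $L^2$-boundedness but not for a fractional gain. To upgrade to a bound on $H^{-1/4}$, I would test $L_n$ against Littlewood-Paley blocks $u_j$, $v_k$ of $U$ and $V$ and exploit \eqref{est.L.2}--\eqref{est.L.3} to extract a vanishing-moment cancellation on each dyadic scale. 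A Cotlar-Stein / Schur-test argument then converts the $\delta$-Hölder regularity into off-diagonal decay of the form $2^{-\alpha(|n-j|+|n-k|)}$ with $\alpha = \delta-\tfrac{1}{2} > 0$ in the resulting dyadic matrix; this decay is summable against the spectral weights $2^{-j/4}2^{-k/4}$ coming from the identity $\|u_j\|_{H^{-1/4}} \approx 2^{-j/4}\|u_j\|_{L^2}$ for frequency-localized blocks.

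The main obstacle is the careful bookkeeping of the matrix elements $\langle L_n u_j, v_k\rangle$: the hypothesis $\delta > 1/2$ is exactly the threshold that makes the resulting triple geometric series over $(n,j,k)$ absolutely convergent, so each step must preserve the full $\delta$ in the exponent. An alternative, and possibly more efficient, route is to invoke directly a $T(1)$-type theorem for weakly singular (fractional) Calderón-Zygmund operators; in the present setting the required weak-boundedness property and $T(1)$ conditions can be checked from \eqref{est.L.1}--\eqref{est.L.3} together with the smoothness of $L$ away from the diagonal.
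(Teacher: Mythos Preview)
Your strategy --- reduce to proving that $L$ maps $H^{-1/4}$ continuously into $H^{1/4}$, then conclude by duality --- is exactly right, and your ``alternative route'' via a $T(1)$-type theorem for fractional Calder\'on--Zygmund operators is in fact precisely the route the paper takes. The paper invokes the framework of Torres and Youssfi: one shows that weakly singular operators of type $(s,\delta)$ send smooth atoms of $\dot F^{t,q}_p$ to smooth molecules of $\dot F^{t+s,q}_p$ (Frazier--Jawerth), after first splitting off a paraproduct $\pi_{T_K(1)}$ to reduce to the case $T_K(1)=0$; specializing to $p=q=2$, $s=1/2$, $t=-1/4$ gives the required $\dot H^{-1/4}\to\dot H^{1/4}$ boundedness. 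Your main route --- a direct Littlewood--Paley/Cotlar--Stein argument on the dyadic pieces $L_n$ --- is a legitimate and more hands-on alternative that avoids the atomic machinery, though it too must eventually confront the $T(1)$ contribution when the input block $u_j$ is coarser than the kernel scale $2^{-n}$; the paraproduct subtraction is the standard fix there as well, so the two approaches converge at that step.

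One genuine technical point you pass over: extending the kernel $L$ by zero outside $(0,1)^2$ does \emph{not} preserve the H\"older conditions~\eqref{est.L.2}--\eqref{est.L.3} near the edges of the square, since a jump to zero is not $\delta$-H\"older against $|x-y|^{-1/2-\delta}$. Your justification (that zero extension is harmless for $|s|<1/2$) applies to $U$, not to the kernel. The paper addresses this with a dedicated extension lemma: $L$ is first continued to the diagonal strip $|x-y|<1$ by propagating its boundary values constantly along diagonals, then matched to a pure power $c\,|x-y|^{-1/2}$ outside the strip. This extended kernel remains in $\mathrm{WSIO}(1/2,\delta)$ with a constant $\lesssim\kappa$, after which the full-line theory applies cleanly.
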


This technical lemma is very important for our proof because it gives a 
quantitative estimate, through $\kappa$, of the action of kernels against 
controls. This Lemma can be deduced from the works of Torres~\cite{MR1048075} 
and Youssfi~\cite{MR1397491}. We give a proof skeleton in Appendix~\ref{appendix.wsio}.
The starting point is to prove that a kernel satisfying 
estimates~\eqref{est.L.1},~\eqref{est.L.2} and ~\eqref{est.L.3} defines a 
weakly singular integral operator, which is continuous from $H^{-1/4}$ to
$H^{+1/4}$. Indeed, such kernels are smoother then standard C\'alderon-Zygmund
operators and it is reasonable to expect that they exhibit some smoothing 
properties.

\skipline
We end this section with two useful formulas. Let $a : (0,1)^3 \rightarrow \R$ 
be a function such that  $a(t,s_1,s_2) = a(t,s_2,s_1)$. We consider the kernel
generated by $a$:
\begin{equation} \label{def.k.from.a}
 L(s_1,s_2) = \int_{s_1 \vee s_2}^1 a(t,s_1,s_2) \dt.
\end{equation}
Lemma~\ref{lemma.kernel.ibp} can be applied to such kernels because they satisfy
the condition $L(\cdot, 1) \equiv 0$. We compute:
\begin{align}
 \label{L.a.1}
  \partial_1 L(s,s) - \partial_2 L(s,s) & = a(s,s,s),
  && \textrm{for }s \in (0,1), \\
 \label{L.a.2}
 \partial_{12} L(s_1,s_2) & = - \partial_{s_1} a(s_2, s_1, s_2) 
    + \int_{s_2}^T \partial_{s_1} \partial_{s_2} a(t,s_1,s_2) \dt,
  && \textrm{for }s_1 < s_2.
\end{align}
Formulas~\eqref{L.a.1} and~\eqref{L.a.2} will be used extensively in the 
following sections. Moreover, as soon as $a(s,s,s) \equiv 0$, we see that
the boundary term $\partial_1 L - \partial_2 L$ vanishes.

\subsection{Asymptotic expansion of $K^\varepsilon$}

In this section, we make our rough expansions more precise. Therefore we
decompose $G$ and $\Phi$ using the same first order terms as for the 
heuristic, but this time we introduce and compute the residues.

\subsubsection{Expansion of $G$ as $\varepsilon \rightarrow 0$}

Recall that we only need to approximate $G$ for $x \in (0,1/2)$. Keeping our
approximation introduced in~\eqref{eq.approx.G}, we expand $G$ as:
\begin{equation} \label{eq.G.expansion}
 G(t,x) = \erf\left(\frac{x}{\sqrt{4\varepsilon t}}\right) + H(t, x),
\end{equation}
where $H \in \mathcal{C}^\infty((0,1)\times(0,1/2))$ is the solution to:
\begin{equation} \label{def.H}
 \left\{ 
 \begin{aligned}
    H_t - \varepsilon H_{xx} & = 0
    && \text{in } (0,1) \times (0,1/2), \\
    H(t,0) &= 0 && \text{in } (0,1), \\
    H_x(t,1/2) &= \sigma(\varepsilon t)
    && \text{in } (0,1), \\
    H(0,x) &= 0 && \text{in } (0,1/2),
 \end{aligned}
 \right.
\end{equation}
where the source term $\sigma$ comes from the boundary condition 
$G_x(t,1/2) = 0$ and balances out the trace of the $\erf()$ part:
\begin{equation} \label{def.sigma}
 \sigma(s) 
 = - \left. \frac{\partial}{\partial x} 
  \left[ \erf\left( \frac{x}{\sqrt{4s}} \right) \right]
  \right|_{x=\frac{1}{2}}
 = - \frac{1}{\sqrt{s\pi}} \exp\left(-\frac{1}{16s}\right).
\end{equation}

\begin{lemma} \label{lemma.H}
 Let $0 < \gamma < \frac{1}{16}$. There exists $C(\gamma) > 0$ such that:
 \begin{equation} \label{estimate.lemma.H}
  \left\| H_{t} \right\|_\infty 
  + \left\| H_{t x} \right\|_\infty 
  + \left\| H_{t t} \right\|_\infty 
  + \left\| H_{t t x} \right\|_\infty 
  \leq C(\gamma) e^{-\gamma/\varepsilon}. 
 \end{equation}
\end{lemma}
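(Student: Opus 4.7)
The plan is to exploit the fact that the Neumann flux $\sigma(\varepsilon t)$ is exponentially small in $1/\varepsilon$ uniformly for $t \in (0,1)$, and to propagate that smallness through the heat equation to $H$, $H_t$, $H_{tx}$, $H_{tt}$ and $H_{ttx}$.

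First, a direct computation based on \eqref{def.sigma} shows by induction that $\sigma^{(k)}(s) = P_k(1/s)\, s^{-1/2} e^{-1/(16 s)}$ for a polynomial $P_k$. Consequently, for any $\gamma < 1/16$ and any $k \geq 0$,
\begin{equation*}
 \sup_{t \in (0,1)} \bigl|\sigma^{(k)}(\varepsilon t)\bigr| \leq C_{k,\gamma}\, e^{-\gamma/\varepsilon},
\end{equation*}
the polynomial blow-up of $P_k(1/(\varepsilon t))$ near $t = 0$ being absorbed by a spare factor $e^{-(1/16-\gamma)/(\varepsilon t)}$. In particular $\sigma^{(k)}(0^+) = 0$ for all $k$.

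Next, I would lift the inhomogeneous Neumann data: set $V(t,x) := x\,\sigma(\varepsilon t)$, so that $V(t,0) = 0$, $V_x(t,1/2) = \sigma(\varepsilon t)$ and $V(0,x) = 0$. Let $\tilde H := H - V$. Then $\tilde H$ solves a heat equation on $(0,1)\times(0,1/2)$ with homogeneous Dirichlet at $x = 0$, homogeneous Neumann at $x = 1/2$, zero initial data, and source $-V_t = -\varepsilon x\,\sigma'(\varepsilon t)$. Differentiating in $t$, each function $W_k := \partial_t^k \tilde H$ for $k = 0,1,2$ satisfies the same boundary problem with source $-\varepsilon^{k+1} x\,\sigma^{(k+1)}(\varepsilon t)$ and zero initial data (using $\sigma^{(j)}(0^+) = 0$ to check compatibility). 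Because both boundary conditions are now homogeneous, the space-independent barrier $M(t) := t\sup_{s \in (0,t)} \|\text{source}(s,\cdot)\|_\infty$ satisfies $M_t - \varepsilon M_{xx} \geq |\text{source}|$, agrees with $\pm W_k$ at $t=0$ and $x=0$, and has vanishing $x$-derivative at $x = 1/2$; the Dirichlet-Neumann parabolic maximum principle (Hopf's lemma at the Neumann side) then yields $|W_k| \leq M$, hence
\begin{equation*}
 \|\tilde H\|_\infty + \|\tilde H_t\|_\infty + \|\tilde H_{tt}\|_\infty \leq C_\gamma\, e^{-\gamma/\varepsilon}.
\end{equation*}

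To recover the mixed derivatives $\tilde H_{tx}$ and $\tilde H_{ttx}$, I would read off the second spatial derivative directly from the PDE, $\varepsilon W_{k,xx} = \partial_t W_k + \varepsilon^{k+2} x\,\sigma^{(k+2)}(\varepsilon t)$, which gives $\|W_{k,xx}\|_\infty \lesssim \varepsilon^{-1} e^{-\gamma/\varepsilon}$; then, since $W_{k,x}(t,1/2) = 0$, integration from $x = 1/2$ yields $\|W_{k,x}\|_\infty \leq \tfrac{1}{2}\|W_{k,xx}\|_\infty$. Adding back the contributions $V_t$, $V_{tx} = \varepsilon\sigma'(\varepsilon t)$, $V_{tt}$ and $V_{ttx} = \varepsilon^2\sigma''(\varepsilon t)$, each of which is itself exponentially small by the first step, proves \eqref{estimate.lemma.H}.

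The main obstacle will be the inhomogeneous Neumann condition: the standard parabolic maximum principle is cleanest with Dirichlet (or sign-controlled Neumann) data, so lifting by $V$ is essential to convert the nonzero flux into a source term that is already exponentially small, after which the comparison argument becomes elementary. A secondary care is that each spatial-derivative recovery costs a factor $\varepsilon^{-1}$; this accumulation is harmless because it is absorbed into the exponential by working with any $\gamma'' \in (\gamma, 1/16)$ and using $\varepsilon^{-k} e^{-\gamma''/\varepsilon} \leq C e^{-\gamma/\varepsilon}$, which is why the statement requires the strict inequality $\gamma < 1/16$.
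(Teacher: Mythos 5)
Your argument mirrors the paper's proof: lift the exponentially small Neumann flux into a source term by subtracting the linear-in-$x$ function, apply a parabolic comparison principle to the resulting homogeneous mixed problem, and recover the spatial derivatives from the PDE while absorbing the $\varepsilon^{-1}$ factors into the exponential. The only bookkeeping issue worth flagging is that controlling $\tilde H_{ttx}$ through $\varepsilon W_{2,xx} = W_3 + \varepsilon^{3} x\,\sigma^{(3)}(\varepsilon t)$ requires the comparison bound on $W_3$ as well, so your range should be $k=0,\dots,3$ (the paper sidesteps this by starting directly at the $H_{ttt}$ level and integrating downward), and your PDE relation has an off-by-one typo since the source for $W_k$ is $-\varepsilon^{k+1}x\,\sigma^{(k+1)}(\varepsilon t)$, not $\varepsilon^{k+2}\sigma^{(k+2)}$.
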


\begin{proof}
 This lemma is due to the exponentially decaying factor within the source term
 $\sigma$ defined by~\eqref{def.sigma}, which allows as many differentiations
 with respect to $x$ or $t$ as needed to be done. 
 Estimate~\eqref{estimate.lemma.H} could in fact be derived for further 
 derivatives. Let us give a sketch of proof.
 
 First, note that $H^{(3)} := H_{ttt}$ is the solution to a similar 
 system as~\eqref{def.H} with the boundary condition
 $H^{(3)}_x(t, 1/2) = \varepsilon^3 \sigma^{(3)}(\varepsilon t)$. We can 
 convert this boundary condition into a source term by writing
 $H^{(3)}(t, x) = x \varepsilon^3 \sigma^{(3)}(\varepsilon t) + \tilde{H}^{(3)}$,
 where $\tilde{H}^{(3)}$ is now the solution to a heat equation with homogeneous
 mixed boundary conditions and a source term 
 $-x\varepsilon^4 \sigma^{(4)}(\varepsilon t)$.
 Applying the maximum principle yields an estimate of the form 
 $\|\tilde{H}^{(3)}\|_\infty \leq C(\gamma)e^{-\gamma/\varepsilon}$. Since
 $\varepsilon H_{tt xx} = H^{(3)}$, we obtain an $L^\infty$ estimate of 
 the same form for $H_{tt xx}$. By integration with respect to time and
 space, we obtain~\eqref{estimate.lemma.H}.
\end{proof}

\subsubsection{Expansion of $\Phi$ as $\varepsilon \rightarrow 0$}

Guided by our rough computations, we decompose $\Phi \in X_1$, the solution
to~\eqref{def.Phi} as:
\begin{equation} \label{eq.Phi.expansion}
 \Phi(t,x) = \rho(x) + \varepsilon \phi(t,x).
\end{equation}
Thus, we introduce the partial differential equation satisfied by 
$\phi \in X_1$:
\begin{equation} \label{def.phi}
 \left\{ 
 \begin{aligned}
    \phi_t - \varepsilon \phi_{xx} & = \rho_{xx}
    && \text{in } (0,1) \times (0,1), \\
    \phi(t,0) &= 0 && \text{in } (0,1), \\
    \phi(t,1) &= 0 && \text{in } (0,1), \\
    \phi(0,x) &= 0 && \text{in } (0,1).
 \end{aligned}
 \right.
\end{equation}

\begin{lemma} \label{lemma.phi}
The following estimates hold:
\begin{align}
  \label{est.phi.2}
  \left\| \Phi_x \right\|_{\infty}
  & \lesssim 1, \\
  \label{est.phi.3}
  \left\| \phi_x \right\|_{\infty}
  & \lesssim 1, \\
  \label{est.phi.4}
  \left\| \Phi_{t x} \right\|_{\infty} 
  = \left\| \varepsilon \phi_{t x} \right\|_{\infty}
  & \lesssim \varepsilon.
\end{align}
\end{lemma}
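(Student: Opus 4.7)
My plan is to deduce the three estimates from the maximum principle applied to suitable $x$-derivatives of $\phi$ (and of $\phi_t$). The key structural observation is that the profile $\rho$ chosen in~\eqref{def.rho} satisfies not only $\rho(0)=\rho(1)=0$ but also $\rho_{xx}(0)=\rho_{xx}(1)=0$, as one sees from the explicit formula $\rho_{xx}(x)=2x(x-1)(2x-1)$. This additional compatibility turns the Dirichlet problem for $\phi$ into a problem with \emph{Neumann} boundary data for $\phi_x$ and $\phi_{tx}$, and it is precisely this Neumann structure that will deliver $\varepsilon$-uniform $L^\infty$ bounds.

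I start with~\eqref{est.phi.3}. Differentiating~\eqref{def.phi} in $x$, the function $w:=\phi_x$ satisfies $w_t-\varepsilon w_{xx}=\rho_{xxx}$ in $(0,1)\times(0,1)$ with $w(0,\cdot)=0$. Since $\phi(t,0)=0$ for all $t$ we have $\phi_t(t,0)=0$, so evaluating~\eqref{def.phi} at $x=0$ gives $\varepsilon\phi_{xx}(t,0)=-\rho_{xx}(0)=0$, that is $w_x(t,0)=0$; the same argument yields $w_x(t,1)=0$. The standard maximum principle for the heat equation with homogeneous Neumann boundary data and bounded source thus gives $\left\|w(t,\cdot)\right\|_\infty\leq t\,\left\|\rho_{xxx}\right\|_\infty\lesssim 1$, which is~\eqref{est.phi.3}. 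Estimate~\eqref{est.phi.2} then follows at once from the identity $\Phi_x=\rho_x+\varepsilon\phi_x$.

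For~\eqref{est.phi.4} I work with $\phi_t$. Since the source $\rho_{xx}$ is time-independent and $\phi_{xx}(0,\cdot)\equiv 0$, evaluating~\eqref{def.phi} at $t=0$ gives $\phi_t(0,\cdot)=\rho_{xx}$, and differentiating~\eqref{def.phi} in $t$ shows that $\phi_t$ itself solves the homogeneous heat equation with homogeneous Dirichlet boundary data (compatible thanks to $\rho_{xx}(0)=\rho_{xx}(1)=0$) and initial datum $\rho_{xx}$. Applying the argument of the previous paragraph to $\phi_t$ in place of $\phi$, the function $\phi_{tx}$ solves a \emph{source-free} heat equation with homogeneous Neumann boundary data and initial datum $\rho_{xxx}$, so the maximum principle yields $\left\|\phi_{tx}\right\|_\infty\leq\left\|\rho_{xxx}\right\|_\infty\lesssim 1$. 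Multiplying by $\varepsilon$ and using $\Phi_{tx}=\varepsilon\phi_{tx}$ gives~\eqref{est.phi.4}.

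The analysis is essentially routine; the main point is to notice that it is the specific choice~\eqref{def.rho}, with $\rho_{xx}$ vanishing at both endpoints, that forces the Neumann boundary conditions on the relevant derivatives, and thereby allows the maximum principle to produce estimates independent of the viscosity~$\varepsilon$.
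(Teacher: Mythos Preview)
Your proof is correct and gives the same bounds as the paper, but the route is genuinely different. The paper argues via Fourier series: it writes $\phi_t$ as a sine expansion with coefficients $\langle\rho_{xx},e_n\rangle$, integrates by parts twice (using precisely $\rho_{xx}(0)=\rho_{xx}(1)=0$) to get $|\langle\rho_{xx},e_n\rangle|=\mathcal{O}(n^{-3})$, and then bounds $\|\phi_{tx}\|_\infty$ by the absolutely convergent series $\sum n\pi\,|\langle\rho_{xx},e_n\rangle|$. You instead differentiate the equation in $x$, observe that the Dirichlet condition on $\phi$ (resp.\ $\phi_t$) together with the vanishing of $\rho_{xx}$ at the endpoints forces homogeneous Neumann data on $\phi_x$ (resp.\ $\phi_{tx}$), and then apply the $L^\infty$ maximum principle for the Neumann heat semigroup. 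Both arguments hinge on the same structural fact, namely $\rho_{xx}(0)=\rho_{xx}(1)=0$, and exploit it in dual ways: the paper uses it to gain two extra powers of $n^{-1}$ in the Fourier coefficients, while you use it to upgrade Dirichlet to Neumann for the derivative and thereby avoid the $\varepsilon^{-1/2}$ loss that a naive energy estimate would incur. Your approach is arguably more elementary in that it avoids any series manipulation, and it makes the role of the special choice of $\rho$ in~\eqref{def.rho} very transparent.
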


\begin{proof}
\textbf{Estimates~\eqref{est.phi.2},~\eqref{est.phi.3} and~\eqref{est.phi.4}}
can be proved using a Fourier series decomposition for heat equations. As an 
example, let us prove~\eqref{est.phi.4}. We introduce the basis 
$e_n(x) = \sqrt{2} \sin(n \pi x)$. Since $\phi_t$ is the solution to a heat
equation with initial data $\rho_{xx} \in H^1_0$, we have:
\begin{equation} \label{eq.lemma.phi.1}
 \phi_t(t, x) 
 = \sum_{n = 1}^{+\infty} e^{-\varepsilon n^2 \pi^2 t}
 \langle \rho_{xx}, e_n \rangle e_n(x).
\end{equation}
Thanks to the choice of $\rho$ in~\eqref{def.rho}, we have 
$\rho_{xx}(0) = \rho_{xx}(1) = 0$. Thus,
\begin{equation} \label{eq.lemma.phi.2}
 \langle \rho_{xx}, e_n \rangle 
 = - \frac{1}{n^2\pi^2} \langle \rho_{xxxx}, e_n \rangle
 = \frac{12 \sqrt{2}}{n^3\pi^3} \left((-1)^n-1\right) 
 = \mathcal{O}\left(\frac{1}{n^3}\right).
\end{equation}
Combining equations~\eqref{eq.lemma.phi.1} and~\eqref{eq.lemma.phi.2} yields:
\begin{equation} \label{eq.lemma.phi.3}
 \left\| \phi_{t x} \right\|_\infty
 \leq \sum_{n=1}^{+\infty} n \pi | \langle \rho_{xx} , e_n \rangle |
 \lesssim \sum_{n=1}^{+\infty} \frac{1}{n^2}.
\end{equation}
Equation~\eqref{eq.lemma.phi.3} concludes the proof of~\eqref{est.phi.4}. A
similar method can be applied to prove~\eqref{est.phi.2} and~\eqref{est.phi.3}.
\end{proof}

\subsubsection{Five stages expansion of the full kernel}

Using expansions~\eqref{eq.G.expansion} and~\eqref{eq.Phi.expansion}, and the 
fact that $\int \Phi_x = 0$, we break down the generator $A(t,s_1,s_2)$ into 6 
smaller kernel generators, $A_1$ through $A_6$, defined by:
\begin{align}
 \label{def.a1}
 A_1(t,s_1,s_2) 
 & = \int_0^{\frac{1}{2}} \rho_x(0) 
   \left( \erf\left(\frac{x}{\sqrt{4\varepsilon(t-s_1)}}\right) 
   \erf\left(\frac{x}{\sqrt{4\varepsilon(t-s_2)}}\right) - 1\right) \dx, \\
 \label{def.a2}
 A_2(t,s_1,s_2) 
 & = \int_0^{\frac{1}{2}} (\rho_x(x) - \rho_x(0))
   \left( \erf\left(\frac{x}{\sqrt{4\varepsilon(t-s_1)}}\right) 
   \erf\left(\frac{x}{\sqrt{4\varepsilon(t-s_2)}}\right) - 1\right) \dx, \\
 \label{def.a3}
 A_3(t,s_1,s_2) 
 & = \int_0^{\frac{1}{2}} \varepsilon \phi_x(1-t,x)
   \left( \erf\left(\frac{x}{\sqrt{4\varepsilon(t-s_1)}}\right) 
   \erf\left(\frac{x}{\sqrt{4\varepsilon(t-s_2)}}\right) - 1\right) \dx, \\
 \label{def.a4}
 A_4(t,s_1,s_2) 
 & = \int_0^{\frac{1}{2}} \Phi_x(1-t,x)
   H(t-s_1, x) 
   \erf\left(\frac{x}{\sqrt{4\varepsilon(t-s_2)}}\right) \dx, \\
 \label{def.a5}
 A_5(t,s_1,s_2) 
 & = \int_0^{\frac{1}{2}} \Phi_x(1-t,x)
   H(t-s_2, x) \cdot 
   \erf\left(\frac{x}{\sqrt{4\varepsilon(t-s_1)}}\right) \dx, \\
 \label{def.a6}
 A_6(t,s_1,s_2) 
 & = \int_0^{\frac{1}{2}} \Phi_x(1-t,x)
   H(t-s_1, x) H(t-s_2, x) \dx.
\end{align}
It can be checked that $A$ defined in~\eqref{def.a} is indeed equal to the sum
of $A_1$ through $A_6$. For each $1 \leq i \leq 6$, we consider the 
associated kernel generated by $A_i$:
\begin{equation} \label{def.k.i}
 K_i(t,s_1,s_2) = \int_{s_1 \vee s_2}^T A_i(t,s_1,s_2) \dt.
\end{equation}
A first remark is that, for each $1 \leq i \leq 6$, $A_i(s,s,s) \equiv 0$ on 
$(0,1)$. Thus, equation~\eqref{L.a.1} tells us that there will be no boundary
term involving $|u|_{H^{-1}}$. 

\subsubsection{Proof methodology}

The six following paragraphs are dedicated to estimates for $K_1$ through $K_6$.
In order to organize the computations that will be carried out for each of these
six kernels, we introduce the notations:
\begin{align}
 \label{def.ti}
 T_i(s_1,s_2) 
 & = \frac{\partial A_i}{\partial s_1} (t, s_1, s_2) \rvert_{t = s_2}, \\
 \label{def.qi}
 Q_i(t,s_1,s_2) 
 & = \frac{\partial^2 A_i}{\partial s_1 \partial s_2} (t, s_1, s_2), \\
 \label{def.ri}
 R_i(s_1,s_2) 
 & = \int_{s_2}^1 Q_i(t,s_1,s_2) \dt. 
\end{align}
Using formula~\eqref{L.a.2}, $\partial_{12}K_i = R_i - T_i$.
Therefore, thanks to Lemma~\ref{lemma.wsio} and Lemma~\ref{lemma.kernel.ibp}, 
we need to prove that each $T_i$ and each $R_i$ satisfies the 
conditions~\eqref{est.L.1},~\eqref{est.L.2} and~\eqref{est.L.3}. For a kernel 
$L$, we will denote $\kappa(L)$ the 
associated constant in Lemma~\ref{lemma.wsio}. In the following paragraphs, 
we investigate the behavior of $\kappa(\partial_{12} K_i)$ with respect to 
$\varepsilon$. We end this paragraph with a useful estimation lemma.
\begin{lemma} \label{lemma.exp.int}
  For any $k > 0$ there exists $c_k > 0$ such that, for any $\lambda > 0$,
  for any $\varepsilon > 0$, 
  \begin{equation}
    \int_0^{+\infty} x^k \exp \left( - \frac{x^2}{4\varepsilon\lambda} \right) \dx
    \leq 
    c_k \left(\varepsilon \lambda\right)^{\frac{k+1}{2}}.
  \end{equation}
\end{lemma}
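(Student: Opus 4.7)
\medskip

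The plan is a direct scaling argument. The integral is essentially a Gaussian moment, and the scale $\sqrt{\varepsilon\lambda}$ is the only length scale in the problem, so dimensional analysis already predicts the stated bound.

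Concretely, I would perform the change of variable $u = x/\sqrt{4\varepsilon\lambda}$, so that $x = 2\sqrt{\varepsilon\lambda}\, u$ and $\dx = 2\sqrt{\varepsilon\lambda}\, \dif u$. Substituting into the integral yields
\begin{equation}
\int_0^{+\infty} x^k \exp\left(-\frac{x^2}{4\varepsilon\lambda}\right) \dx
= 2^{k+1} (\varepsilon\lambda)^{(k+1)/2} \int_0^{+\infty} u^k e^{-u^2} \dif u.
\end{equation}
The remaining integral depends only on $k$; it can be evaluated, via $v = u^2$, as $\tfrac{1}{2}\Gamma\!\left(\tfrac{k+1}{2}\right)$. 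This is a finite positive constant for every $k > 0$, so setting $c_k = 2^{k+1} \cdot \tfrac{1}{2}\Gamma\!\left(\tfrac{k+1}{2}\right)$ yields the claimed estimate, with equality (not just $\leq$).

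There is no genuine obstacle here: the only thing to check is that the substitution is well-defined for $\varepsilon, \lambda > 0$, which is immediate, and that $\Gamma((k+1)/2)$ is finite, which holds for all $k > 0$. The lemma is stated as an inequality rather than an equality because it will be applied many times in Section~\ref{section.keps.residues} with various exponents $k$ appearing through differentiations of $G$ and $H$, and it is convenient to absorb all numerical prefactors into a single constant $c_k$.
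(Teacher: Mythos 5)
Your proof is correct and coincides with the paper's, which likewise proves the estimate by the change of variable $\tilde{x} = x/\sqrt{4\varepsilon\lambda}$. The explicit evaluation via the Gamma function is a pleasant extra, but the substance of the argument is identical.
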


\begin{proof}
Use a change of variables introducing $\tilde{x} = x / \sqrt{4\varepsilon\lambda}.$
\end{proof}

In the following paragraphs, similarly as we use the $\lesssim$ sign, we will 
use the $\approx$ sign to denote equalities that hold up to a numerical constant
(independent on all variables) of which we will not keep track.

\subsection{Handling the $K_1$ kernel}

The kernel $K_1$ contains the main coercive part of $K^\varepsilon$ discovered
in Section~\ref{section.burgers.k0}. Starting from its definition 
in~\eqref{def.a1}, we decompose it using a scaling on $x$:
\begin{equation} \label{eq.k1.1}
 \begin{split}
  A_1(t,s_1,s_2)
  & = \rho_x(0) \int_0^{\frac{1}{2}} \left(
  \erf\left(\frac{x}{\sqrt{4\varepsilon(t-s_1)}}\right) 
     \erf\left(\frac{x}{\sqrt{4\varepsilon(t-s_2)}}\right) - 1\right) \dx \\
  & = \frac{\sqrt{\varepsilon}}{15} 
  \int_0^{\frac{1}{4\sqrt{\varepsilon}}} 
  \left( 1 - 
  \erf\frac{x}{\sqrt{\alpha}}
  \erf\frac{x}{\sqrt{\beta}}
  \right) \dx \\
  & = \frac{\sqrt{\varepsilon}}{15} \int_0^{+\infty}
  \left(1 - 
  \erf\frac{x}{\sqrt{\alpha}}
  \erf\frac{x}{\sqrt{\beta}}
  \right) \dx 
  - \frac{\sqrt{\varepsilon}}{15} \int_{\frac{1}{4\sqrt{\varepsilon}}}^{+\infty}
  \left(1 - 
  \erf\frac{x}{\sqrt{\alpha}} 
  \erf\frac{x}{\sqrt{\beta}}
  \right) \dx.
 \end{split}
\end{equation}
The first integral gives rise to the main coercive part of the kernel 
and has already been computed exactly in Section~\ref{section.burgers.k0}. The 
second part is a residue and has to be taken care of. Let us name it 
$\tilde{A}_1$:
\begin{equation}
 \tilde{A}_1(t,s_1,s_2) 
 = 
 - \frac{\sqrt{\varepsilon}}{15} 
 \int_{\frac{1}{4\sqrt{\varepsilon}}}^{+\infty} 
 \left( \erf\left(\frac{x}{\sqrt{\alpha}}\right) 
 \erf\left(\frac{x}{\sqrt{\beta}}\right) - 1 \right) \dx.
\end{equation}
Therefore, equation~\eqref{eq.k1.1} yields:
\begin{equation}
 K_1(s_1,s_2) 
 = \frac{\sqrt{\varepsilon}}{45\sqrt{\pi}} K^0(s_1,s_2)
 + \tilde{K}_1(s_1,s_2).
\end{equation}

\begin{lemma}
There exist $c > 0$ and $\gamma > 0$ such that, for any $\varepsilon > 0$,
\begin{equation}
 \kappa(\partial_{12}\tilde{K}_1) 
 \leq c \cdot \exp \left( - \frac{\gamma}{\varepsilon} \right),
\end{equation}
where $\kappa(\partial_{12}\tilde{K}_1)$ is the constant associated to the 
weakly singular integral operator $\tilde{K}_1$ in Lemma~\ref{lemma.wsio}.
\end{lemma}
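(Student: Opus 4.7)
The plan is to exploit the fact that $\tilde A_1$ only integrates $x$ over the half-line $[1/(4\sqrt{\varepsilon}), +\infty)$, which places us deep in the Gaussian tails of the $\erf$ functions. Setting $\alpha = t-s_1$ and $\beta = t-s_2$, both in $(0,1]$, the central inequality is the splitting
\begin{equation}
 e^{-x^2/\alpha} \leq e^{-1/(32\varepsilon)}\cdot e^{-x^2/(2\alpha)},
 \qquad x\geq 1/(4\sqrt{\varepsilon}),\ \alpha\in(0,1],
\end{equation}
which extracts a uniform exponentially small prefactor while leaving an absolutely integrable Gaussian in $x$.

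First I would compute $T_1$ and $R_1$ from their definitions~\eqref{def.ti}--\eqref{def.ri}. Each differentiation of $\tilde A_1$ with respect to $s_1$ or $s_2$ acts through $\alpha$ or $\beta$ and generates a factor of the form $x\alpha^{-3/2}e^{-x^2/\alpha}$ (or the analogue in $\beta$). Applying the splitting above and Lemma~\ref{lemma.exp.int}, one checks that, after integration in $x$, any such polynomial-times-exponential integrand is controlled uniformly in $\alpha,\beta\in(0,1]$ by $C e^{-\gamma/\varepsilon}$ with $\gamma = 1/32$, because the negative powers of $\alpha$ are absorbed by the $e^{-c/\alpha}$ gain inherited from $e^{-x^2/(2\alpha)}$. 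Integrating in $t$ over $(s_2,1)$, the same uniform bound $C e^{-\gamma/\varepsilon}$ is inherited by $T_1$, $R_1$, and, by repeating the argument one more time, by their first derivatives in $(s_1,s_2)$.

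Next I would apply Lemma~\ref{lemma.wsio} with $\delta = 1$ to the kernel $\partial_{12}\tilde K_1 = R_1 - T_1$. Since this kernel is $C^1$ on $(0,1)^2$ with itself and its gradient of size $O(e^{-\gamma/\varepsilon})$, and since $|s_1-s_2|\leq 1$ automatically absorbs the prefactors $|s_1-s_2|^{-1/2}$ and $|s_1-s_2|^{-3/2}$ that appear in~\eqref{est.L.1}--\eqref{est.L.3}, all three hypotheses of Lemma~\ref{lemma.wsio} hold with $\kappa(\partial_{12}\tilde K_1) \lesssim e^{-\gamma/\varepsilon}$. Combined with Lemma~\ref{lemma.kernel.ibp}, and using the observation that $\tilde A_1(s,s,s)\equiv 0$ so the boundary term vanishes, this yields the stated estimate.

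The main technical subtlety is the uniform-in-$\alpha$ control of the successive derivatives of $\erf(x/\sqrt{\alpha})$: each differentiation brings down an additional inverse power of $\alpha$ (or $\beta$), which would naively blow up as $t\to s_1$ or $t\to s_2$. This is precisely what the splitting neutralizes, because $\alpha^{-k} e^{-c/\alpha}$ is bounded on $(0,1]$ for every $k>0$; the required bookkeeping is straightforward but has to be done carefully enough that all polynomial factors in $\alpha^{-1}$ and $\beta^{-1}$ are absorbed before the final integration in $t$.
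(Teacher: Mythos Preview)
Your proposal is correct and follows essentially the same approach as the paper: both exploit the fact that the integration domain $x\geq 1/(4\sqrt{\varepsilon})$ lies deep in the Gaussian tails, so an exponentially small factor $e^{-\gamma/\varepsilon}$ can be extracted from $e^{-x^2/\alpha}$ (with $\alpha\leq 1$), leaving enough decay to integrate in $x$ and absorb all inverse powers of $\alpha,\beta$, after which Lemma~\ref{lemma.wsio} applies trivially to the resulting bounded $C^1$ kernel. The paper's own proof is in fact terser than yours---it simply writes down $\tilde T_1$ and $\tilde R_1$ and asserts the exponential bound---so your explicit splitting $e^{-x^2/\alpha}\leq e^{-1/(32\varepsilon)}e^{-x^2/(2\alpha)}$ and the observation that $\alpha^{-k}e^{-c/\alpha}$ is bounded are a useful fleshing-out of what the paper leaves implicit.
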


\begin{proof}
 Recalling notations~\eqref{def.ti},~\eqref{def.qi} and~\eqref{def.ri}, we 
 compute:
\begin{align}
 \label{def.t1}
 \tilde{T}_1(s_1,s_2) 
 = \left( \partial_{s_1} \tilde{A}_1 \right) \rvert_{t = s_2}
 & \approx 
  \varepsilon^{1/2} \Delta^{-3/2} 
  \int_{\frac{1}{4\sqrt{\varepsilon}}}^{+\infty} x
  \exp\left(-\frac{x^2}{\Delta}\right) \dx, \\
 \tilde{Q}_1(t,s_1,s_2) 
 = \partial_{s_1}\partial_{s_2} \tilde{A}_1(t,s_1,s_2)
 & \approx \varepsilon^{1/2} (\alpha\beta)^{-3/2}
  \int_{\frac{1}{4\sqrt{\varepsilon}}}^{+\infty} x^2
  \exp\left(-x^2\left(\frac{1}{\alpha}+\frac{1}{\beta}\right)\right) \dx, \\
 \label{def.r1}
 \tilde{R}_1(s_1,s_2) 
 = \int_{s_2}^1 \tilde{Q}_1(t,s_1,s_2)
 & \approx \varepsilon^{1/2} \int_{s_2}^1 (\alpha\beta)^{-3/2}
  \int_{\frac{1}{4\sqrt{\varepsilon}}}^{+\infty} x^2
  \exp\left(-x^2\left(\frac{1}{\alpha}+\frac{1}{\beta}\right)\right) \dx \dt,
\end{align}
 where we introduce $\Delta = s_2 - s_1$, that will also be used in the sequel.
 We claim that both $\tilde{T}_1$ and $\tilde{R}_1$ are $\mathcal{C}^\infty$ 
 kernels on $(0,1)\times(0,1)$. Moreover, all their derivatives are bounded
 by $e^{-\gamma/\varepsilon}$ for any $\gamma < 1/16$, thanks to the 
 exponential terms in~\eqref{def.t1} and~\eqref{def.r1}. We omit the detailed 
 computations in order to focus on the tougher kernels.
\end{proof}

\subsection{Handling the $K_2$ kernel}

Using the definition of $\rho$ given in~\eqref{def.rho}, we rewrite $A_2$
defined in~\eqref{def.a2} as:
\begin{equation}
 \begin{split}
   A_2(t,s_1,s_2) 
   & = \int_0^{\frac{1}{2}} \left( \rho_x(x) - \rho_x(0) \right)
     \erf \left( \frac{x}{\sqrt{4\varepsilon\alpha}} \right)
     \erf \left( \frac{x}{\sqrt{4\varepsilon\beta}} \right) \dx \\
   & = \int_0^{\frac{1}{2}} x^2 (x-1)^2
     \erf \left( \frac{x}{\sqrt{4\varepsilon\alpha}} \right)
     \erf \left( \frac{x}{\sqrt{4\varepsilon\beta}} \right) \dx.
 \end{split}
\end{equation}
\textbf{First part.} Remembering that $\erf(+\infty)=1$, we consider the first
order derivative:
\begin{equation}
 T_2(s_1,s_2) = \left(\partial_{s_1} A_2\right) \rvert_{t=s_2}
 \approx \varepsilon^{-1/2} \Delta^{-3/2} \int_0^{\frac{1}{2}} x^3(x-1)^2 
 \exp\left( - \frac{x^2}{4\varepsilon\Delta} \right) \dx.
\end{equation}
Using Lemma~\ref{lemma.exp.int} and differentiating gives:
\begin{equation} \label{eq.t2.1}
 \begin{split}
 \left| T_2(s_1,s_2) \right| 
 & \lesssim \varepsilon^{3/2} \Delta^{1/2}, \\ 
 \left| \partial_{s_1} T_2(s_1,s_2) \right| 
 & \lesssim \varepsilon^{3/2} \Delta^{-1/2}, \\
 \left| \partial_{s_2} T_2(s_1,s_2) \right| 
 & \lesssim \varepsilon^{3/2} \Delta^{-1/2}.
 \end{split}
\end{equation}
Estimates~\eqref{eq.t2.1} prove that $\kappa(T_2) \lesssim \varepsilon^{3/2}$.
In fact, $T_2$ is a smoother than the weakly singular integral operators 
studied in Lemma~\ref{lemma.wsio}, since such operators allow degeneracy like
$\Delta^{-1/2}$ along the diagonal. Moreover, we proved that $T_2$ is Lipschitz 
continuous, whereas Lemma~\ref{lemma.wsio} only requires $\mathcal{C}^{p}$ 
with $p > \frac{1}{2}$.

\textbf{Second part.} Now we consider the second order derivative. Let us 
compute:
\begin{equation} \label{def.q12}
 Q_2(t,s_1,s_2) = \partial_{s_1} \partial_{s_2} A_2(t,s_1,s_2) 
 \approx \varepsilon^{-1} \left( \alpha \beta \right)^{-3/2}
 \int_0^{\frac{1}{2}} x^4 (x-1)^2 \exp \left( - \frac{x^2}{4\varepsilon} 
 \left( \frac{1}{\alpha} + \frac{1}{\beta} \right) \right) \dx.
\end{equation}
Thanks to Lemma~\ref{lemma.exp.int}, we estimate the size of $Q_2$:
\begin{equation}
 \left| Q_2(t,s_1,s_2) \right|
 \lesssim
 \varepsilon^{3/2} \left(\alpha\beta\right)^{-3/2}
 \left( \frac{1}{\alpha} + \frac{1}{\beta} \right)^{-5/2}
 = \frac{\varepsilon^{3/2} \alpha\beta}{\left(\alpha+\beta\right)^{5/2}}.
\end{equation}
Writing $\alpha = \Delta + \tau$ and $\beta = \tau$, we can estimate:
\begin{equation} \label{estimate.r12.i}
 \left| R_2(s_1,s_2) \right| 
 = \left|\int_{s_2}^1 Q_2(t,s_1,s_2) \dt\right|
 \lesssim \varepsilon^{3/2} \int_0^1 
   \frac{\tau(\Delta+\tau)}{(\Delta + 2\tau)^{5/2}} \dtau
 \lesssim \varepsilon^{3/2} \Delta^{-1/2}.
\end{equation}
We should now move on to computing $\partial_{s_1}R_2$ and $\partial_{s_2}R_2$, 
to establish the missing estimates on $R_2$. However, the computations 
associated to $R_2$ are very similar to the ones that we carry out for $R_3$. 
Since $R_3$ is a little harder, we skip the proof for $R_2$ and refer the reader 
to the proof of $R_3$, which is fully detailed in the next paragraph. Therefore,
we claim that:
\begin{equation}
 \kappa(\partial_{12}K_2) \lesssim \varepsilon^{3/2}.
\end{equation}

\subsection{Handling the $K_3$ kernel}

In this section, we consider:
\begin{equation}
 \tag{\ref{def.a3}}
 A_3(t,s_1,s_2) = \varepsilon \int_0^{\frac{1}{2}} \phi_x(1-t,x) 
 \left( 
  \erf\left(\frac{x}{\sqrt{4\varepsilon(t-s_1)}}\right)
  \erf\left(\frac{x}{\sqrt{4\varepsilon(t-s_2)}}\right) - 1 \right) \dx.
\end{equation}
\textbf{First part.} Remembering that $\erf(+\infty) = 1$, we consider the 
first order derivative:
\begin{equation}
 T_3(s_1, s_2) := \left( \partial_{s_1} A_3 \right)\rvert_{t = s_2} 
 \approx
 \varepsilon^{1/2} \Delta^{-3/2}
 \int_0^{\frac{1}{2}} \phi_x(1-s_2,x) \cdot
 x \exp \left( - \frac{x^2}{4\varepsilon\Delta} \right) \dx .
\end{equation}
Thanks to Lemma~\ref{lemma.phi} and Lemma~\ref{lemma.exp.int}, we have:
\begin{equation} \label{t13.estimate.i}
 \left| T_3(s_1,s_2) \right|
 \lesssim 
 \varepsilon^{1/2} \Delta^{-3/2} \left\| \phi_{x} \right\|_{\infty} \cdot
 \int_0^{\frac{1}{2}} x \exp \left( - \frac{x^2}{4\varepsilon\Delta} \right) \dx
 \lesssim 
 \varepsilon^{3/2} \Delta^{-1/2}.
\end{equation}
Moreover, 
\begin{equation} \label{t13.estimate.ii}
 \begin{split}
 \left| \partial_{s_1} T_3(s_1, s_2) \right|
 \lesssim & \quad
 \varepsilon^{1/2} \Delta^{-5/2} \left\| \phi_{x} \right\|_{\infty} \cdot
 \int_0^{\frac{1}{2}} x \exp \left( - \frac{x^2}{4\varepsilon\Delta} \right) \dx \\
 & + \varepsilon^{1/2} \Delta^{-3/2} \left\| \phi_{x} \right\|_{\infty} \cdot
 \int_0^{\frac{1}{2}} \frac{x^3}{4\varepsilon\Delta^2} 
 \exp \left( - \frac{x^2}{4\varepsilon\Delta} \right) \dx \\
 \lesssim & \quad \varepsilon^{3/2} \Delta^{-3/2}.
 \end{split}
\end{equation}
and
\begin{equation} \label{t13.estimate.iii}
 \begin{split}
 \left| \partial_{s_2} T_3(s_1, s_2) \right|
 \lesssim 
 & \quad \varepsilon^{1/2} \Delta^{-3/2} \left\| \phi_{xt} \right\|_{\infty} \cdot
 \int_0^{\frac{1}{2}} x \exp \left( - \frac{x^2}{4\varepsilon\Delta} \right) \dx \\
 & + 
 \varepsilon^{1/2} \Delta^{-5/2} \left\| \phi_{x} \right\|_{\infty} \cdot
 \int_0^{\frac{1}{2}} x \exp \left( - \frac{x^2}{4\varepsilon\Delta} \right) \dx \\
 & + \varepsilon^{1/2} \Delta^{-3/2} \left\| \phi_{x} \right\|_{\infty} \cdot
 \int_0^{\frac{1}{2}} \frac{x^3}{4\varepsilon\Delta^2} 
 \exp \left( - \frac{x^2}{4\varepsilon\Delta} \right) \dx \\
 \lesssim & \quad \varepsilon^{3/2} \Delta^{-3/2}.
 \end{split}
\end{equation}
Putting together estimates~\eqref{t13.estimate.i},~\eqref{t13.estimate.ii} 
and~\eqref{t13.estimate.iii} proves that $\kappa(T_3) \lesssim \varepsilon^{3/2}$.

\textbf{Second part}. Let us move on to the second order derivative part.
We compute:
\begin{equation} \label{def.q13}
 Q_3(t,s_1,s_2) 
 = \partial_{s_1}\partial_{s_2} A_3 
 \approx (\alpha\beta)^{-3/2} \int_0^{\frac{1}{2}} x^2 \phi_x(1-t,x) 
 \exp \left( - \frac{x^2}{4\varepsilon} \left(\frac{1}{\alpha}+\frac{1}{\beta}\right)\right)\dx.
\end{equation}
Combining Lemma~\ref{lemma.exp.int} and Lemma~\ref{lemma.phi} yields:
\begin{equation}
 \left| Q_3(t,s_1,s_2) \right| \lesssim
 \frac{\varepsilon^{3/2}}{(\alpha + \beta)^{3/2}}.
\end{equation}
Writing $\alpha = \Delta + \tau$ and $\beta = \tau$, we can estimate:
\begin{equation} \label{estimate.r13.i}
 \left| R_3(s_1,s_2) \right| 
 = \left|\int_{s_2}^1 Q_3(t,s_1,s_2) \dt\right|
 \lesssim \int_0^1 \left(\frac{\varepsilon}{\Delta + 2\tau}\right)^{3/2} \dtau
 \lesssim \varepsilon^{3/2} \Delta^{-1/2}.
\end{equation}
Now we will prove similar estimates for the first order derivatives of 
$R_3$. Differentiating equation~\eqref{def.q13} with respect to $s_1$ (or 
similarly $\alpha$) yields:
\begin{equation}
 \begin{split}
  \partial_{s_1} Q_3(t, s_1, s_2) 
  \approx & - \frac{3}{2} \alpha^{-5/2} \beta^{-3/2} 
  \int_0^{\frac{1}{2}} x^2 \phi_x(1-t,x) \exp \left( - \frac{x^2}{4\varepsilon} 
  \left(\frac{1}{\alpha}+\frac{1}{\beta}\right)\right)\dx \\
  & + \left(\alpha\beta\right)^{-3/2} \frac{1}{\alpha^2} 
  \int_0^{\frac{1}{2}} \frac{x^4}{4\varepsilon} \phi_x(1-t,x) 
  \exp \left( - \frac{x^2}{4\varepsilon} 
  \left(\frac{1}{\alpha}+\frac{1}{\beta}\right)\right)\dx.    
 \end{split}
\end{equation}
Combining Lemma~\ref{lemma.exp.int} and Lemma~\ref{lemma.phi} gives:
\begin{equation}
 \left| \partial_{s_1} Q_3(t, s_1, s_2) \right|
 \lesssim
 \alpha^{-5/2} \beta^{-3/2} \frac{\varepsilon^{3/2}}{
 \left(\frac{1}{\alpha}+\frac{1}{\beta}\right)^{3/2}}
 + \alpha^{-7/2} \beta^{-3/2} \frac{\varepsilon^{3/2}}{
 \left(\frac{1}{\alpha}+\frac{1}{\beta}\right)^{5/2}} \\
 \lesssim
 \varepsilon^{3/2} \alpha^{-5/2}.
\end{equation}
Integration with respect to $t$ yields an estimate of $\partial_{s_1} R_3$:
\begin{equation}
 \left| \partial_{s_1} R_3(s_1,s_2)  \right|
 \lesssim
 \int_{s_2}^1 \left| \partial_{s_1} Q_3(t, s_1, s_2) \right| \dt
 \lesssim
 \varepsilon^{3/2} 
 \int_{s_2}^1 \frac{\dt}{\alpha^{5/2}}
 \lesssim 
 \varepsilon^{3/2} \Delta^{-3/2}.
\end{equation}
From this, we deduce that:
\begin{equation} \label{estimate.r13.ii}
 \left| R_3(s_1,s_2) - R_3(\tilde{s_1},s_2) \right|
 \lesssim
 \varepsilon^{3/2} \Delta^{-3/2} \left| s_1 - \tilde{s_1} \right|.
\end{equation}
Eventually, we finish with the smoothness of $R_3$ with respect to $s_2$.
We compute the difference for $s_1 < s_2 < \tilde{s_2}$ with 
$\tilde{s_2} - s_2 \leq \frac{1}{2} \left(s_2 - s_1\right)$:
\begin{equation}
 \begin{split}
 \left| R_3(s_1,s_2) - R_3(s_1,\tilde{s_2}) \right|
 & = \left| \int_{s_2}^1 Q_3(t,s_1,s_2) \dt - 
            \int_{\tilde{s_2}}^1 Q_3(t,s_1,\tilde{s_2}) \dt \right| \\
 & = \left| \int_{s_2}^{\tilde{s_2}} Q_3(t,s_1,s_2) \dt - 
            \int_{\tilde{s_2}}^1 \left( Q_3(t,s_1,\tilde{s_2}) 
                                      - Q_3(t,s_1,s_2) \right) \dt \right| \\
 & \leq \int_{s_2}^{\tilde{s_2}} \frac{\varepsilon^{3/2}}{\Delta^{3/2}} \dt
  + \left| \int_{\tilde{s_2}}^1 \int_{s_2}^{\tilde{s_2}} 
       \partial_{s_2}Q_3(t,s_1,s) \ds \dt \right| \\
 & \leq \frac{\varepsilon^{3/2}}{\Delta^{3/2}} \left|s_2-\tilde{s_2}\right|
  + \int_{s_2}^{\tilde{s_2}} \int_{\tilde{s_2}}^1 
      \left| \partial_{s_2}Q_3(t,s_1,s) \right| \dt \ds.
 \end{split}
\end{equation}
The first term is already in the correct form. We need to work on the second 
term. Proceeding as above, differentiating equation~\eqref{def.q13} with 
respect to $s_2$ (or similarly $\beta$), then combining 
Lemma~\ref{lemma.exp.int} and Lemma~\ref{lemma.phi} gives:
\begin{equation}
 \left| \partial_{s_2}Q_3(t,s_1,s) \right| 
 \lesssim
 \varepsilon^{3/2} \frac{1}{t-s} \frac{1}{\left(t-s+t-s_1\right)^{3/2}}.
\end{equation}
We compute:
\begin{equation} \label{estimate.r13.iii}
 \begin{split}
  \int_{s_2}^{\tilde{s_2}} \int_{\tilde{s_2}}^1 
    \left| \partial_{s_2}Q_3(t,s_1,s) \right| \dt \ds
  & \leq 
  \varepsilon^{3/2} \int_{s_2}^{\tilde{s_2}} \int_{\tilde{s_2}}^1 
    \frac{1}{t-s}\frac{1}{(t-s_1)^{3/2}} \dt \ds \\
  & \leq 
  \varepsilon^{3/2} \Delta^{-3/2} \int_{s_2}^{\tilde{s_2}} \int_{\tilde{s_2}}^1 
    \frac{\dt}{t-s} \ds \\
  & \leq 
  \varepsilon^{3/2} \Delta^{-3/2} \int_{s_2}^{\tilde{s_2}} 
  \left| \ln\left(\tilde{s_2}-s\right) \right| \ds \\
  & \leq 
  \varepsilon^{3/2} \Delta^{-3/2} \left| s_2 - \tilde{s_2} \right|
  \left( 1 + \ln \left| s_2 - \tilde{s_2} \right| \right).
 \end{split}
\end{equation}
This last estimate does not give Lipschitz smoothness, but it does provide
$\mathcal{C}^p$ smoothness for any $p < 1$, which is enough. Together,
estimates~\eqref{estimate.r13.i},~\eqref{estimate.r13.ii} 
and ~\eqref{estimate.r13.iii} prove that
$\kappa(R_3) \lesssim \varepsilon^{3/2}$. 

\subsection{Handling the $K_4$ kernel}

In this section, we consider:
\begin{equation} \tag{\ref{def.a4}}
 A_4(t,s_1,s_2) = \int_0^{\frac{1}{2}} \Phi_x(1-t,x) H(t-s_1,x) 
 \erf\left(\frac{x}{\sqrt{4\varepsilon(t-s_2)}}\right) \dx.
\end{equation}
\textbf{First part.} We consider the first order derivative:
\begin{equation}
 \begin{split}
 T_4(s_1,s_2) 
 & = \left(\partial_{s_1} A_4\right)\rvert_{t=s_2} \\
 & = \int_0^{\frac{1}{2}} \Phi_x(1-s_2,x) H_{t}(s_2-s_1,x) \dx,
 \end{split}
\end{equation}
where we used the fact that $\erf(+\infty) = 1$. The following estimates
are straight forward:
\begin{align}
 \left| T_2(s_1,s_2) \right| 
 \leq & \linf{\Phi_x} \linf{H_{t}}, \\
 \left| T_2(s_1,s_2) - T_2(\tilde{s}_1,s_2) \right| 
 \leq & \left| s_1 - \tilde{s}_1 \right| \cdot \linf{\Phi_x} \linf{H_{tt}}, \\
 \left| T_2(s_1,s_2) - T_2(s_1,\tilde{s}_2)\right| 
 \leq &\left| s_2 - \tilde{s}_2 \right| \cdot \linf{\Phi_x} \linf{H_{tt}} \\
 & + \left| s_2 - \tilde{s}_2 \right| \cdot \linf{\Phi_{t x}} \linf{H_{t}}.
\end{align}
\textbf{Second part.} We move on to the second order derivative part. We compute:
\begin{equation} \label{def.q2}
 Q_4(t,s_1,s_2) = \partial_{s_1}\partial_{s_2} A_4(t,s_1,s_2)
 \approx 
  \varepsilon^{-1/2} \beta^{-3/2} \int_0^{\frac{1}{2}} 
  x \Phi_x(1-t,x) H_t(\alpha,x) 
  \exp \left( - \frac{x^2}{4\varepsilon\beta} \right) \dx.
\end{equation}
Since $H_t(t,0)\equiv 0$, 
$|H_t(t,x)| \leq x \left\|H_{t x}\right\|_\infty$.
Using Lemma~\ref{lemma.exp.int}, we obtain:
\begin{equation}
 \begin{split}
  \left| Q_4(t,s_1,s_2) \right| 
  & \lesssim \varepsilon^{-1/2} \beta^{-3/2} \left\|H_{t x}\right\|_\infty
  \left\|\Phi_x\right\|_\infty \int_0^{\frac{1}{2}} 
  x^2 \exp \left( - \frac{x^2}{4\varepsilon\beta} \right) \dx \\
  & \lesssim \varepsilon \left\|H_{t x}\right\|_\infty
  \left\|\Phi_x\right\|_\infty.
 \end{split}
\end{equation}
By integration over $t\in(s_2,1)$, we obtain:
\begin{equation}
  \left| R_4(s_1,s_2) \right| 
  \lesssim \varepsilon \left\|H_{t x}\right\|_\infty
  \left\|\Phi_x\right\|_\infty.
\end{equation}
Now we establish the smoothness of $Q_4$ with respect to $s_1$. Differentiating
equation~\eqref{def.q2} with respect to $s_1$ (or $\alpha$), and applying the
same techniques yields the estimate:
\begin{equation}
  \left| \partial_{s_1}Q_4(t, s_1,s_2) \right| 
  \lesssim \varepsilon \left\|H_{t t x}\right\|_\infty
  \left\|\Phi_x\right\|_\infty.
\end{equation}
This proves that:
\begin{equation}
  \left| R_4(s_1,s_2) - R_4(\tilde{s_1},s_2) \right| 
  \lesssim \varepsilon \left\|H_{t t x}\right\|_\infty
  \left\|\Phi_x\right\|_\infty \cdot | s_1 - \tilde{s_1} |.
\end{equation}
Finally, we consider the smoothness of $Q_4$ with respect to $s_2$. We know that:
\begin{equation}
 \left| R_4(s_1,s_2) - R_4(s_1,\tilde{s_2}) \right|
 \leq 
  \int_{s_2}^{\tilde{s_2}} \left| Q_4(t,s_1,s_2) \right| \dt
  + \int_{s_2}^{\tilde{s_2}} \int_{\tilde{s_2}}^1 
      \left| \partial_{s_2}Q_4(t,s_1,s) \right| \dt \ds.
\end{equation}
This first part obviously gives rise to a Lipschitz estimate. As for the second
part, we compute $\partial_{s_2}Q_4$ by differentiating~\eqref{def.q2} with 
respect to $\beta$. We obtain
\begin{equation}
 \begin{split}
  \partial_{s_2}Q_4(t,s_1,s)(t,s_1,s) \approx
  & - \frac{3}{2} \varepsilon^{-1/2} \beta^{-5/2} \int_0^{\frac{1}{2}} 
  x \Phi_x(t,x) H_t(\alpha,x) 
  \exp \left( - \frac{x^2}{4\varepsilon\beta} \right) \dx \\
  & + \varepsilon^{-1/2} \beta^{-3/2} \frac{1}{4\varepsilon\beta^2} \int_0^{\frac{1}{2}} 
  x^3 \Phi_x(t,x) H_t(\alpha,x) 
  \exp \left( - \frac{x^2}{4\varepsilon\beta} \right) \dx.
 \end{split}
\end{equation}
Similar estimates yield:
\begin{equation}
  \left| \partial_{s_2}Q_4(t,s_1,s) \right| 
  \lesssim \varepsilon \left\|H_{t x}\right\|_\infty
  \left\|\Phi_x\right\|_\infty \cdot \frac{1}{t-s}.
\end{equation}
Therefore:
\begin{equation}
 \begin{split}
  \int_{s_2}^{\tilde{s_2}} \int_{\tilde{s_2}}^1  
    \left| \partial_{s_2}Q_4(t,s_1,s) \right| \dt \ds
  & \lesssim 
  \varepsilon \left\|H_{t x}\right\|_\infty \left\|\Phi_x\right\|_\infty \cdot
  \int_{s_2}^{\tilde{s_2}} \int_{\tilde{s_2}}^1 \frac{\dt \ds}{t-s} \\
  & \lesssim 
  \varepsilon \left\|H_{t x}\right\|_\infty \left\|\Phi_x\right\|_\infty \cdot
  \int_{s_2}^{\tilde{s_2}} \left| \ln (\tilde{s_2} - s) \right| \ds \\
  & \lesssim 
  \varepsilon \left\|H_{t x}\right\|_\infty \left\|\Phi_x\right\|_\infty \cdot
  | \tilde{s_2} - s_2 | \left( 1 + \ln  | \tilde{s_2} - s_2 | \right).
 \end{split}
\end{equation}
Therefore, for any fixed $p < 1$, we have:
\begin{equation}
 \left| R_4(s_1,s_2) - R_4(s_1,\tilde{s_2}) \right|
 \lesssim
    \varepsilon \left\|H_{t x}\right\|_\infty \left\|\Phi_x\right\|_\infty \cdot
    \left| \tilde{s_2} - s_2 \right|^p.
\end{equation}
Thanks to Lemma~\ref{lemma.H} and Lemma~\ref{lemma.phi}, this proves that, for 
any $\gamma < \frac{1}{16}$,
\begin{equation}
 \kappa(\partial_{12}K_4) \lesssim \exp\left(-\frac{\gamma}{\varepsilon}\right).
\end{equation}

\subsection{Handling the $K_5$ kernel}

Recall that $A_5$ was defined by:
\begin{equation}
 A_5(t,s_1,s_2) = \int_0^{\frac{1}{2}} \Phi_x(1-t,x) H(t-s_2,x) 
 \erf\left(\frac{x}{\sqrt{4\varepsilon(t-s_1)}}\right) \dx.
 \tag{\ref{def.a5}}
\end{equation}
\textbf{First part.} The first order derivative $T_5$ is null. Indeed,
\begin{equation}
 \begin{split}
 T_5(s_1,s_2) 
 & = \left(\partial_{s_1} A_5\right)\rvert_{t=s_2} \\
 & = \frac{1}{2\sqrt{\pi\varepsilon}} \int_0^{\frac{1}{2}} 
 \Phi_x(1-s_2,x) H(0,x) \cdot \frac{x}{(s_2-s_1)^{\frac{3}{2}}} 
 \exp\left(-\frac{x^2}{4\varepsilon(s_2-s_1)}\right) \dx
 = 0.
 \end{split}
\end{equation}
\textbf{Second part.} We consider the second order derivative:
\begin{equation} \label{def.q5}
 Q_5(t,s_1,s_2) 
 = \partial_{s_2} \partial_{s_1} A_5(t,s_1,s_2) 
 \approx 
 \varepsilon^{-1/2} \alpha^{-3/2}
 \int_0^{\frac{1}{2}} 
   x \Phi_x(t,x) H_{t}(\beta,x) \exp\left(-\frac{x^2}{4\varepsilon\alpha}\right) \dx.
\end{equation}
Since $H_t(t, 0) \equiv 0$, 
$|H_t(t,x)| \leq x \left\|H_{t x}\right\|_\infty$.
Using Lemma~\ref{lemma.exp.int}, we obtain:
\begin{equation}
 \begin{split}
  \left| Q_5(t,s_1,s_2) \right| 
  & \lesssim \varepsilon^{-1/2} \alpha^{-3/2} \left\|H_{t x}\right\|_\infty
  \left\|\Phi_x\right\|_\infty \int_0^{\frac{1}{2}} 
  x^2 \exp \left( - \frac{x^2}{4\varepsilon\alpha} \right) \dx \\
  & \lesssim \varepsilon \left\|H_{t x}\right\|_\infty
  \left\|\Phi_x\right\|_\infty.
 \end{split}
\end{equation}
By integration over $t\in(s_2,1)$, we obtain:
\begin{equation}
  \left| R_5(s_1,s_2) \right| 
  \lesssim \varepsilon \left\|H_{t x}\right\|_\infty
  \left\|\Phi_x\right\|_\infty.
\end{equation}
Differentiating~\eqref{def.q5} with respect to $\alpha$ and proceeding likewise
yields:
\begin{equation}
 \left| \partial_{s_1} Q_5(t,s_1,s_2) \right|
 \lesssim
 \varepsilon \left\|H_{t x}\right\|_\infty
  \left\|\Phi_x\right\|_\infty \cdot \frac{1}\alpha.
\end{equation}
Thus, 
\begin{equation}
 \left| R_5(s_1,s_2) - R_5(\tilde{s_1},s_2) \right|
 \lesssim
 \varepsilon \left\|H_{t x}\right\|_\infty
  \left\|\Phi_x\right\|_\infty \cdot \Delta^{-1} \left| \tilde{s_1} - s_1 \right|.
\end{equation}
Differentiation with respect to $\beta$ is even easier and gives:
\begin{equation}
 \left| \partial_{s_2} Q_5(t,s_1,s_2) \right|
 \lesssim
 \varepsilon \left\|H_{t t x}\right\|_\infty
  \left\|\Phi_x\right\|_\infty,
\end{equation}
from which we easily conclude that $R_5$ is Lipschitz with respect to $s_2$.

Thanks to Lemma~\ref{lemma.H} and Lemma~\ref{lemma.phi}, this proves that, for 
any $\gamma < \frac{1}{16}$,
\begin{equation}
 \kappa(\partial_{12}K_5) \lesssim \exp\left(-\frac{\gamma}{\varepsilon}\right).
\end{equation}

\subsection{Handling the $K_6$ kernel}

Recall that $A_6$ was defined by:
\begin{equation}
 A_6(t,s_1,s_2) 
 = \int_0^{\frac{1}{2}} \Phi_x(1-t,x) H(t-s_1, x) H(t-s_2, x) \dx.
 \tag{\ref{def.a6}}
\end{equation}
\textbf{First part.} The first order derivative $T_6$ is null. Indeed:
\begin{equation}
 T_6(s_1,s_2) = \left( \partial_{s_1} A_6\right) \rvert_{t=s_2} = 
 \int_0^{\frac{1}{2}} \Phi_x(0,x) H_t(s_2-s_1,x) H(0,x) \dx = 0.
\end{equation}
\textbf{Second part.} We consider the second order derivative:
\begin{equation}
 Q_6(t,s_1,s_2) = \partial_{s_2} \partial_{s_1} A_6(t,s_1,s_2) = 
 \int_0^{\frac{1}{2}} \Phi_x(1-s_2,x) H_t(t-s_1,x) H_t(t-s_2,x) \dx.
\end{equation}
For any $t \in (0,1)$, we estimate:
\begin{equation}
 \begin{split}
 \left| Q_6(t,s_1,s_2) \right| 
 \leq & \linf{\Phi_x}\linf{H_{t}}^2, \\
 \left| Q_6(t,s_1,s_2) - Q_6(t,\tilde{s}_1,s_2) \right| 
 \leq & \left| s_1 - \tilde{s}_1 \right| \cdot \linf{\Phi_x} \linf{H_{tt}} \linf{H_{t}}, \\
 \left| Q_6(t,s_1,s_2) - Q_6(t,s_1,\tilde{s}_2)\right| 
 \leq & \left| s_2 - \tilde{s}_2 \right| \cdot \linf{\Phi_x} \linf{H_{t}} \linf{H_{tt}} \\
 & + \left| s_2 - \tilde{s}_2 \right| \cdot \linf{\Phi_{t x}} \linf{H_{t}}^2.
 \end{split}
\end{equation}
Hence, we can extend these estimates to:
\begin{equation}
 R_6(s_1,s_2) = \int_{s_2}^1 Q_6(t,s_1,s_2) \dt
\end{equation}
The only non immediate extension is:
\begin{equation}
 \begin{split}
 \left|R_6(s_1,s_2)-R_6(s_1,\tilde{s}_2)\right|
 \leq & \int_{s_2}^1 \left|Q_6(t,s_1,s_2)-Q_6(t,s_1,\tilde{s}_2)\right| \dt
 + \int_{s_2}^{\tilde{s}_2} \left| Q_6(t,s_1,\tilde{s}_2) \right| \dt \\
 \leq & \left| s_2 - \tilde{s}_2 \right| 
 \left( 
 \linf{\Phi_x} \linf{H_{t}} \linf{H_{tt}} \right. \\
 & \left. + \linf{\Phi_{t x}} \linf{H_{t}}^2
 + \linf{\Phi_x} \linf{H_{t}}^2 \right)
 \end{split}
\end{equation}
Thanks to Lemma~\ref{lemma.H} and Lemma~\ref{lemma.phi}, this proves that, for 
any $\gamma < \frac{1}{16}$,
\begin{equation}
 \kappa(\partial_{12}K_6) \lesssim \exp\left(-\frac{\gamma}{\varepsilon}\right).
\end{equation}

\subsection{Conclusion of the expansion of $K^\varepsilon$}

\begin{lemma} \label{lemma.keps.coercive}
 There exists $\varepsilon_1 > 0$ and $k_1 > 0$ such that, 
 for any $0 < \varepsilon \leq \varepsilon_1$ and any $u \in L^2(0,1)$,
 \begin{equation} \label{eq.k.eps.coercive}
  \langle K^\varepsilon u, u \rangle 
  \geq k_1 \sqrt{\varepsilon} |U|_{H^{-1/4}}^2.
 \end{equation}
\end{lemma}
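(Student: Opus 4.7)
The plan is to assemble the estimates proved in the preceding six subsections and combine them with the coercivity of the asymptotic kernel established in Lemma~\ref{lemma.coercivity}. The starting identity is the decomposition
\begin{equation*}
 K^\varepsilon = \frac{\sqrt{\varepsilon}}{45\sqrt{\pi}} K^0
 + \tilde{K}_1 + K_2 + K_3 + K_4 + K_5 + K_6,
\end{equation*}
which follows from adding the six kernel generators $A_1,\dots,A_6$ defined in~\eqref{def.a1}--\eqref{def.a6}, together with the exact computation of the main part of $K_1$ carried out in~\eqref{eq.k1.1}. Lemma~\ref{lemma.coercivity} (after the change of variables $(x,y)\mapsto(1-x,1-y)$ that relates $K^0$ to $N$) bounds the leading term from below by $\tfrac{\sqrt{\varepsilon}}{45\sqrt{\pi}}\gamma\,|U|_{H^{-1/4}}^2$ for some fixed $\gamma>0$ independent of $\varepsilon$.

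Next I would bound the six residual kernels. Each $A_i$ satisfies $A_i(s,s,s)\equiv 0$, as observed after~\eqref{def.k.i}, so by formula~\eqref{L.a.1} the diagonal boundary term in Lemma~\ref{lemma.kernel.ibp} vanishes for every $K_i$ (and for $\tilde K_1$). Hence Lemma~\ref{lemma.kernel.ibp} gives
\begin{equation*}
 \langle K_i u, u\rangle
 = \int_\Gamma \partial_{12} K_i(s_1,s_2)\, U(s_1) U(s_2)\, \dif s_1 \dif s_2,
\end{equation*}
and Lemma~\ref{lemma.wsio} then yields
\begin{equation*}
 |\langle K_i u, u\rangle| \leq C(\delta)\, \kappa(\partial_{12} K_i)\, |U|_{H^{-1/4}}^2.
\end{equation*}
Plugging in the bounds already obtained in Sections~5.4--5.9 gives $\kappa(\partial_{12}\tilde K_1),\,\kappa(\partial_{12} K_4),\,\kappa(\partial_{12} K_5),\,\kappa(\partial_{12} K_6)\lesssim e^{-\gamma'/\varepsilon}$ for some $\gamma'>0$, and $\kappa(\partial_{12} K_2),\,\kappa(\partial_{12} K_3)\lesssim \varepsilon^{3/2}$.

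Putting the pieces together, there exists $C>0$ such that for every $u\in L^2(0,1)$,
\begin{equation*}
 \langle K^\varepsilon u, u\rangle
 \geq \left( \frac{\gamma}{45\sqrt{\pi}}\sqrt{\varepsilon}
 - C\,\varepsilon^{3/2} - C\, e^{-\gamma'/\varepsilon} \right) |U|_{H^{-1/4}}^2.
\end{equation*}
Since $\varepsilon^{3/2}=o(\sqrt{\varepsilon})$ and $e^{-\gamma'/\varepsilon}=o(\sqrt{\varepsilon})$ as $\varepsilon\to 0$, there exists $\varepsilon_1>0$ small enough so that the parenthesis is bounded below by $\tfrac{\gamma}{90\sqrt{\pi}}\sqrt{\varepsilon}$ for all $0<\varepsilon\leq\varepsilon_1$. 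This yields~\eqref{eq.k.eps.coercive} with $k_1=\gamma/(90\sqrt{\pi})$.

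At this point the only substantive issue is conceptual rather than computational: one must check that the hypotheses of Lemma~\ref{lemma.kernel.ibp} are satisfied for each of the six kernels (namely $K_i(\cdot,1)\equiv 0$, which is immediate from the definition~\eqref{def.k.from.a} since the integral from $s_1\vee s_2=1$ vanishes, and $W^{2,1}(\Gamma)$ regularity, which follows from the explicit smoothness of $\Phi$, $G$ and $H$), and then that the diagonal boundary term indeed cancels thanks to $A_i(s,s,s)=0$. Once these verifications are in place, the proof is pure bookkeeping. The only mild delicacy is that for $K_3$, $K_4$ and $K_6$ the estimates only furnish $\mathcal{C}^p$ smoothness with some $p<1$ in the variable $s_2$, but this is precisely the setting covered by Lemma~\ref{lemma.wsio} with $\delta=p>\tfrac{1}{2}$, so no additional argument is needed.
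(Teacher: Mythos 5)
Your proof follows the same route as the paper: decompose $K^\varepsilon = \frac{\sqrt{\varepsilon}}{45\sqrt{\pi}} K^0 + \tilde K_1 + K_2 + \dots + K_6$, invoke Lemma~\ref{lemma.coercivity} for the leading term, control each residual kernel via Lemma~\ref{lemma.kernel.ibp} and Lemma~\ref{lemma.wsio} using the $\kappa$-estimates of the preceding subsections, and absorb the $O(\varepsilon^{3/2})$ and exponentially small contributions for $\varepsilon$ small enough. The argument is correct; the only trivial slip is naming $K_6$ among the kernels furnishing only $\mathcal{C}^p$ ($p<1$) regularity in $s_2$ — for $K_5$ and $K_6$ the paper actually obtains Lipschitz bounds (it is $K_2$, $K_3$, $K_4$ that yield the logarithmic loss), but this has no bearing on the conclusion since Lemma~\ref{lemma.wsio} only requires $\delta>\tfrac{1}{2}$.
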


\begin{proof}
 Thanks to the previous paragraphs, we have shown that
 $K^\varepsilon = \frac{\sqrt{\varepsilon}}{45\sqrt{\pi}} K^0 + R$,
 where $R = \tilde{K}_1 + K_2 + K_3 + K_4 + K_5 + K_6$ is such that
 $\kappa(\partial_{12}R) \lesssim \varepsilon^{3/2}$. From Lemma~\ref{lemma.wsio},
 we deduce that there exists $C_0$ such that, for any $u\in L^2(0,1)$,
 $|\langle Ru , u\rangle| \leq C_0 \varepsilon^{3/2} |U|^2_{H^{-1/4}}$.
 Moreover, thanks to Lemma~\ref{lemma.coercivity}, there exists $c_0$
 such that $\langle K^0 u, u \rangle \geq c_0 |U|^2_{H^{-1/4}}$.
 Hence, for any $k_1 < c_0 / (45\sqrt{\pi})$, 
 equation~\eqref{eq.k.eps.coercive} holds for $\varepsilon$ small enough.
\end{proof}

Equation~\eqref{eq.k.eps.coercive} gives a very weak coercivity, both because
the norm involved is a very weak $H^{-5/4}$ norm on the control $u$, and because
the coercivity constant $k_1 \sqrt{\varepsilon}$ decays when 
$\varepsilon \rightarrow 0$. However, this is enough to overcome the remaining
higher order residues, as we prove in the following section.

\section{Back to the full Burgers non-linear system}
\label{section.back}

In the first part of this work, we studied a second order approximation of our
initial Burgers' system. Thanks to the careful study of an integral kernel, we 
proved that the projection $\langle \rho, b \rangle$ of the state is coercive 
with respect to the control $u$, for a given norm. Now, we want to prove that 
the same fact holds true for the full non-linear system, ie. for the projection
$\langle \rho, y \rangle$. In order to do this, we need to provide estimates 
showing that the projections of the higher order terms in the
expansion of the state are smaller than the coercive quantity obtained above. 
Therefore, we need to prove estimates of $a$, $b$ and the higher order residues
involving the weak $|u|_{H^{-5/4}}$ norm.

\subsection{Preliminary estimates on $a$ and $b$}

\subsubsection{Estimating the first order term $a$}

In order to compute $a$ (defined by system~\eqref{system.a}), a natural idea is 
to introduce $U$ the primitive of $u$ such that $U(0) = 0$. Neglecting the 
impact of the boundary Dirichlet conditions gives the approximation 
$a(t,x) \approx U(t)$. To make this exact, we introduce $\tilde{a}$ which is the
solution to:
\begin{equation} \label{system.a.tilde}
 \left\{ 
 \begin{aligned}
    \tilde{a}_t - \varepsilon \tilde{a}_{xx} & = 0 && \text{in } (0,1) \times (0,1), \\
    \tilde{a}(t,0) &= -U(t) && \text{in } (0,1), \\
    \tilde{a}(t,1) &= -U(t) && \text{in } (0,1), \\
    \tilde{a}(0,x) &= 0 && \text{in } (0,1).
 \end{aligned}
 \right.
\end{equation}
Hence, $a(t,x) = U(t) + \tilde{a}(t,x)$, without any approximation. This 
decomposition is useful because we write $a$ as the sum of a term which does 
not depend on $x$ (thus, $a_x = \tilde{a}_x$) and a term whose size is 
controlled by the desired quantity $|U|_{H^{-1/4}}$. Indeed,

\begin{lemma} \label{lemma.estimates.a}
The following estimates hold:
\begin{align}
  \left\| \tilde{a} \right\|_2 
  & \lesssim \left| U \right|_{H^{-1/4}}, \label{est.a.2} \\
  \left\| a \right\|_{\infty} + \left\| \tilde{a} \right\|_{\infty}
  & \lesssim \left| u \right|_2, \label{est.a.1} \\
  \varepsilon \left\| a_x \right\|_{L^2(L^\infty)} 
  & \lesssim |u|_2. \label{est.a.3}
\end{align}
\end{lemma}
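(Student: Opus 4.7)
The three estimates can be handled independently and each follows from one of the heat-equation estimates already established in Section~\ref{section.technical}, applied either to $a$ itself (which solves the homogeneous Dirichlet heat equation~\eqref{system.a} with source $u(t)$) or to $\tilde{a}$ (defined by~\eqref{system.a.tilde}, a heat equation with inhomogeneous Dirichlet data $-U(t)$ on both endpoints).

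For~\eqref{est.a.2}, the key point is that $\tilde{a}$ is driven only by boundary data, so the weak-solution estimate of Lemma~\ref{lemma.heat.weak} is tailored to it. I would apply that lemma with $\nu=\varepsilon$, $T=1$, $f=0$, $z^0=0$ and $v_0=v_1=-U$; the first group of terms on the right-hand side of~\eqref{estimate.weak.heat} vanishes, and only the boundary contribution survives, giving $\|\tilde a\|_2 \lesssim |U|_{H^{-1/4}(0,1)}$. This is the only one of the three estimates that uses the very weak regularity setting, and it is crucial because it converts boundary regularity into a bound compatible with the $H^{-5/4}$ coercivity obtained for $K^\varepsilon$ in Lemma~\ref{lemma.keps.coercive}.

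For~\eqref{est.a.1}, I would apply the maximum principle to $a$ directly: $a$ solves a heat equation with zero initial and boundary data and with space-independent source $u(t)$, hence
\begin{equation*}
 \|a\|_\infty \;\leq\; \int_0^1 |u(s)|\,\ds \;\leq\; |u|_2,
\end{equation*}
by Cauchy--Schwarz on $(0,1)$. Since $\tilde a = a - U$ and $|U|_\infty \leq |u|_{L^1} \leq |u|_2$, the bound on $\|\tilde a\|_\infty$ follows from the triangle inequality.

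For~\eqref{est.a.3}, the idea is to combine the smooth energy estimate of Lemma~\ref{lemma.heat.f} with a one-dimensional Sobolev embedding in $x$. Applied to $a$ with $\nu=\varepsilon$, $f=u(t)$ (whose $L^2((0,1)^2)$-norm is $|u|_2$) and $z^0=0$, Lemma~\ref{lemma.heat.f} gives $\varepsilon\|a_{xx}\|_2 + \sqrt{\varepsilon}\|a_x\|_2 \lesssim |u|_2$. Using $H^1(0,1)\hookrightarrow L^\infty(0,1)$ pointwise in $t$, namely $\|a_x(t,\cdot)\|_\infty \lesssim \|a_x(t,\cdot)\|_2 + \|a_{xx}(t,\cdot)\|_2$, and squaring and integrating in $t$, one obtains $\|a_x\|_{L^2(L^\infty)} \lesssim \|a_x\|_2 + \|a_{xx}\|_2$, whence $\varepsilon\|a_x\|_{L^2(L^\infty)} \lesssim (\sqrt\varepsilon+1)|u|_2 \lesssim |u|_2$ for $\varepsilon \leq 1$. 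There is no real obstacle here; the only subtle step in the lemma as a whole is the choice to route~\eqref{est.a.2} through the transposition setup of Lemma~\ref{lemma.heat.weak} in order to pick up the $H^{-1/4}$ norm on $U$ rather than a stronger norm on $u$.
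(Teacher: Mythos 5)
Your proof is correct, and for~\eqref{est.a.2} and~\eqref{est.a.1} it matches the paper's argument (the paper phrases~\eqref{est.a.1} by applying the maximum principle to $\tilde{a}$ rather than to $a$, but the two orderings are trivially equivalent via $a = U + \tilde a$ and $|U|_\infty \le |u|_{L^1} \le |u|_2$).

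For~\eqref{est.a.3} you take a genuinely different, if slightly weaker, route. You invoke the one-dimensional Sobolev embedding $\|a_x(t,\cdot)\|_\infty \lesssim \|a_x(t,\cdot)\|_2 + \|a_{xx}(t,\cdot)\|_2$, which forces you to multiply by $\varepsilon$ and absorb a $\sqrt\varepsilon\,|u|_2$ term, so the conclusion requires $\varepsilon \le 1$. The paper instead exploits that $a$ is even about $x = 1/2$ (space-independent source, symmetric boundary conditions), so $a_x(t, 1/2) \equiv 0$, giving the sharper pointwise bound
\begin{equation*}
 \sup_{x \in (0,1)} | a_x(t,x) | = \sup_{x}\left| \int_{1/2}^x a_{xx}(t,x')\,\dx' \right| \le | a_{xx}(t,\cdot) |_{L^2(0,1)},
\end{equation*}
hence $\|a_x\|_{L^2(L^\infty)} \le \|a_{xx}\|_2$ and $\varepsilon\|a_x\|_{L^2(L^\infty)} \lesssim |u|_2$ directly from~\eqref{estimate.lemma.heat.f}, with no restriction on $\varepsilon$ and no intermediate $\|a_x\|_2$ term. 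Since the paper explicitly insists its constants are independent of all parameters including $\nu$, the evenness-based argument is cleaner; your version is fine in the regime the paper actually uses ($\varepsilon$ small), but it does carry a hidden $\varepsilon \le 1$ assumption that the paper avoids.
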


\begin{proof}
The first inequality~\eqref{est.a.2} is a direct application of 
estimate~\eqref{estimate.weak.heat} from Lemma~\ref{lemma.heat.weak}.

The second inequality is a consequence of the maximum principle. Indeed, 
thanks to equation~\eqref{system.a.tilde}, $\left\| \tilde{a} \right\|_{\infty}$
is smaller than $|U|_\infty$. Since $a = U + \tilde{a}$, 
$\left\| a \right\|_{\infty}$ is smaller than 2 $|U|_\infty$. 
Estimate~\eqref{est.a.1} follows because $|U|_\infty \leq |u|_2$.

The third inequality stems from Lemma~\ref{lemma.heat.f}. Since $a$ is 
even, $a_x(\cdot,1/2) \equiv 0$. Thus:
\begin{equation}
  \begin{split}
    \left\| a_x \right\|_{L^2(L^\infty)}^2 
    & = \int_0^1 \left( \sup_{x \in (0,1)} \left| a_x(t,x) \right| \right)^2 \dt \\
    & = \int_0^1 \left( \sup_{x \in (0,1)} 
      \left| \int_{\frac{1}{2}}^x a_{xx}(t,x')\dx' \right| \right)^2 \dt \\
    & \leq \int_0^1 \int_0^1 a_{xx}^2(t,x')\dx' \dt.
  \end{split}
\end{equation}
Combined with~\eqref{estimate.lemma.heat.f}, this proves~\eqref{est.a.3}.
\end{proof}

\subsubsection{Estimating the second order term $b$}

\begin{lemma}
The following estimate holds:
\begin{align} 
  \label{est.b.1}
    \varepsilon^{1/2} \left\| b \right\|_{L^\infty(L^2)} 
      + \varepsilon \left\| b_x \right\|_{L^2}  
    & \lesssim
      \left| u \right|_{L^2} \cdot \left| U \right|_{H^{-1/4}}, \\
  \label{est.b.2}
    \varepsilon^{3/2} \left\| b \right\|_\infty 
    & \lesssim \left| u \right|_2^2, \\
  \label{est.b.3}
    \varepsilon^{3/2} \left\| b_x \right\|_{L^2(L^\infty)}
    & \lesssim \left| u \right|_2^2.
\end{align}
\end{lemma}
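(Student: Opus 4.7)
The plan is to prove the three estimates in sequence, exploiting the structure $a = U(t) + \tilde{a}(t,x)$ from Lemma~\ref{lemma.estimates.a} and the fact that $U$ does not depend on $x$.

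\textbf{First estimate.} The key observation is that, since $\partial_x U^2 \equiv 0$, we can rewrite the forcing term as $-aa_x = -\tfrac12(a^2)_x = -\tfrac12(a^2-U^2)_x$. Using $a = U + \tilde{a}$, we factor $a^2 - U^2 = \tilde{a}(2U + \tilde{a})$. I then apply Lemma~\ref{lemma.heat.fx} with $f = -\tfrac12(a^2-U^2)$, which gives
\begin{equation*}
\varepsilon^{1/2} \|b\|_{L^\infty(L^2)} + \varepsilon \|b_x\|_2
\lesssim \|a^2 - U^2\|_2 \leq \|\tilde{a}\|_2 \left( 2|U|_\infty + \|\tilde{a}\|_\infty \right).
\end{equation*}
Now $|U|_\infty \leq |u|_2$ by Cauchy--Schwarz (since $U(0)=0$), $\|\tilde{a}\|_\infty \lesssim |u|_2$ by~\eqref{est.a.1}, and $\|\tilde{a}\|_2 \lesssim |U|_{H^{-1/4}}$ by~\eqref{est.a.2}, yielding~\eqref{est.b.1}.

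\textbf{Third estimate.} For the $b_x$ bound, I use the more robust Lemma~\ref{lemma.heat.f} applied to the $b$-equation with source $f = -aa_x$. A direct Hölder split $\|aa_x\|_2 \leq \|a\|_\infty \|a_x\|_2$ together with~\eqref{est.a.1} and the standard bound $\sqrt{\varepsilon}\|a_x\|_2 \lesssim |u|_2$ (obtained by applying Lemma~\ref{lemma.heat.f} to system~\eqref{system.a} whose source is $u(t)$) gives $\|aa_x\|_2 \lesssim \varepsilon^{-1/2}|u|_2^2$. Lemma~\ref{lemma.heat.f} then yields
\begin{equation*}
\varepsilon \|b_{xx}\|_2 + \sqrt{\varepsilon}\|b_x\|_2 + \|b_t\|_2 \lesssim \varepsilon^{-1/2}|u|_2^2.
\end{equation*}
Since $b(t,0)=b(t,1)=0$, by the mean value theorem $b_x$ vanishes somewhere in $(0,1)$ for each $t$, so $\|b_x(t,\cdot)\|_\infty \leq \|b_{xx}(t,\cdot)\|_{L^1} \leq \|b_{xx}(t,\cdot)\|_2$. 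Integrating in time gives $\|b_x\|_{L^2(L^\infty)} \leq \|b_{xx}\|_2 \lesssim \varepsilon^{-3/2}|u|_2^2$, which is~\eqref{est.b.3}.

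\textbf{Second estimate.} To promote the previous bounds to an $L^\infty$ estimate, I invoke the embedding in Lemma~\ref{lemma.xt.c0}: $\|b\|_\infty \lesssim \|b\|_{X_1} = \|b\|_2 + \|b_{xx}\|_2 + \|b_t\|_2$. The latter two terms are controlled by $\varepsilon^{-3/2}|u|_2^2$ by the previous step, and from~\eqref{est.b.1} combined with the trivial inequality $|U|_{H^{-1/4}} \leq |U|_2 \leq |u|_2$ one has $\|b\|_2 \leq \|b\|_{L^\infty(L^2)} \lesssim \varepsilon^{-1/2}|u|_2^2 \leq \varepsilon^{-3/2}|u|_2^2$ for $\varepsilon \leq 1$. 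This concludes~\eqref{est.b.2}.

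\textbf{Main obstacle.} The delicate point is~\eqref{est.b.1}: a naive application of Lemma~\ref{lemma.heat.fx} with $f=-\tfrac12 a^2$ would produce only $\|a^2\|_2 \lesssim |u|_2^2$, losing the crucial weaker-norm factor $|U|_{H^{-1/4}}$ that the coercivity argument of Lemma~\ref{lemma.keps.coercive} depends on. Subtracting $U^2$ from $a^2$ before differentiation is the whole mechanism that converts an $|u|_2^2$ bound into the sharper $|u|_2 \cdot |U|_{H^{-1/4}}$; it works precisely because $U$ is $x$-independent, so this subtraction costs nothing at the level of the source term $(a^2-U^2)_x = (a^2)_x$.
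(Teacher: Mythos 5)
Your proof is correct and follows essentially the same route as the paper. Your decomposition $-\tfrac12(a^2-U^2)_x$ is algebraically identical to the paper's $-\partial_x\bigl[a\tilde{a}-\tfrac12\tilde{a}^2\bigr]$ (both equal $-U\tilde{a}-\tfrac12\tilde{a}^2$ under the derivative), and the subsequent use of Lemma~\ref{lemma.heat.fx} for \eqref{est.b.1}, Lemma~\ref{lemma.heat.f} plus the zero-mean argument on $b_x$ for \eqref{est.b.3}, and the $X_1\hookrightarrow L^\infty$ embedding for \eqref{est.b.2} match the paper's argument.
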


\begin{proof}
\textbf{For the first inequality},
we want to apply Lemma~\ref{lemma.heat.fx}. Hence, we want to write the source
term in equation~\eqref{system.b} as a spatial derivative. Writing 
$-aa_x = -\partial_x (a^2/2)$ would not lead to the required estimates.
In order for the weak $H^{-1/4}$ norm to appear, we need to introduce $\tilde{a}$.
Indeed, using the decomposition $a(t,x) = U(t) + \tilde{a}(t,x)$, we can write:
\begin{equation}
- a a_x 
= - a\tilde{a}_x 
= - \frac{\dif}{\dx} \left[ a \tilde{a} - \frac{1}{2} \tilde{a}^2 \right].
\end{equation}
The term under the derivative can easily be estimated in $L^2$:
\begin{equation}
  \left\| a\tilde{a} - \frac{1}{2} \tilde{a}^2 \right\|_{L^2}
  \leq \left\| \tilde{a} \right\|_{L^2} \cdot 
  \left( \left\| a \right\|_{\infty} + \left\| \tilde{a} \right\|_{\infty} \right)
  \lesssim
  \left| u \right|_{L^2} \cdot \left| U \right|_{H^{-1/4}},
\end{equation}
where the last inequality comes from Lemma~\ref{lemma.estimates.a}. Thus, we can
apply Lemma~\ref{lemma.heat.fx} to prove~\eqref{est.b.1}.

\textbf{For the second and third inequalities}, thanks to Lemma~\ref{lemma.heat.f}, 
$\left\|a_x\right\|_2 \lesssim \varepsilon^{-1/2} |u|_2$. Moreover, thanks to
Lemma~\ref{lemma.estimates.a}, $\left\| a \right\|_\infty \lesssim |u|_2$. Thus,
$\left\|a a_x\right\|_2 \lesssim  \varepsilon^{-1/2} |u|_2^2$. We can apply
Lemma~\ref{lemma.heat.f} to show that 
$\left\|b\right\|_{X_1} \lesssim \varepsilon^{-3/2} \left| u \right|_2^2$.
Inequality~\eqref{est.b.2} follows from the injection 
$X_1 \hookrightarrow L^\infty$ (see~\eqref{eq.lemma.xt.c0.2} from 
Lemma~\ref{lemma.xt.c0}).

Moreover, since $\int_0^1 b_x(t,x)\dx = b(t,1) - b(t,0) = 0$ for any 
$t \in (0,1)$, the mean value of $b_x(t, \cdot)$ is 0. Thus, 
$|b_x(t, \cdot)|_\infty \leq |b_{xx}(t,\cdot)|_2$. Hence, 
$ \left\| b_x \right\|_{L^2(L^\infty)} \leq \left\| b_{xx} \right\|_2$.
This proves estimate~\eqref{est.b.3}.
\end{proof}

\subsection{Non-linear residue}

Let us expand $y$ as $a + b + r$, where $a$ stands for the first order linear
approximation, $b$ stands for the second quadratic order and $r$ is a (small) 
residue. Therefore, $r$ is the solution to:
\begin{equation} \label{system.r}
 \left\{ 
 \begin{aligned}
    r_t - \varepsilon r_{xx} & = 
      - rr_x - \left[(a+b)r\right]_x - \left[ ab + \frac{1}{2}b^2 \right]_x 
      && \text{in } (0,1) \times (0,1), \\
    r(t,0) &= 0 && \text{in } (0,1), \\
    r(t,1) &= 0 && \text{in } (0,1), \\
    r(0,x) &= 0 && \text{in } (0,1).
 \end{aligned}
 \right.
\end{equation}

\begin{lemma} \label{lemma.r}
System~\eqref{system.r} admits a unique solution $r \in X_1$. Moreover, 
under the assumption:
\begin{equation} \label{hypothesis.u}
 \left| u \right|_2 \leq \varepsilon^{3/2},
\end{equation}
the following estimate holds:
\begin{equation} \label{est.r}
  \ltwo{r} + \ltwo{r_t} \lesssim \varepsilon^{-3/2} \lu^2 \whu
\end{equation}
\end{lemma}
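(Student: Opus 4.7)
The plan is to cast~\eqref{system.r} in the template of Lemma~\ref{lemma.burgers.forced} by rewriting the right-hand side as
\[
  r_t - \varepsilon r_{xx} = - r r_x + (wr)_x + g_x,
  \qquad w := -(a+b), \quad g := -ab - \tfrac{1}{2}b^2,
\]
so that we fall in the situation of~\eqref{system.burgers.forced} with $\nu = \varepsilon$ and $y^0 = 0$. For existence and uniqueness of $r\in X_1$, the cleanest route is to set $r := y - a - b$, where $y \in X_1$ comes from Lemma~\ref{lemma.well-posedness} with $y_0 = 0$, and $a, b \in X_1$ are supplied by Lemma~\ref{lemma.heat.f}; a direct verification shows that this $r$ solves~\eqref{system.r}. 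Uniqueness then follows by applying~\eqref{estimate.burgers.forced} to the difference of two solutions.

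The core of the argument is to control each of the four ingredients in the right-hand side of~\eqref{estimate.burgers.forced}. Combining the bounds from Lemma~\ref{lemma.estimates.a} with the hypothesis~\eqref{hypothesis.u}, the $b$-contributions in $w$ are subdominant: $\linf{w}, \|w\|_{L^2(L^\infty)} \lesssim \lu$ and $\|w_x\|_{L^2(L^\infty)} \lesssim \varepsilon^{-1}\lu$. In particular $\gamma = \varepsilon^{-1}\|w\|_{L^2(L^\infty)}^2 \lesssim \varepsilon^{-1}\lu^2 \leq \varepsilon^2$ remains bounded, so every exponential factor $e^\gamma, e^{6\gamma}$ is absorbed into the $\lesssim$. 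For the forcing $g$, combining $\linf{a} \lesssim \lu$ with~\eqref{est.b.1},~\eqref{est.b.2},~\eqref{est.b.3}, together with the elementary inequality $\|b(t,\cdot)\|_\infty \leq \|b_x(t,\cdot)\|_{L^2_x}$ (using $b(t,0) = 0$), yields the three key estimates
\[
  \ltwo{g} \lesssim \varepsilon^{-1/2}\lu^2\whu, \qquad
  \|g\|_{L^2(L^\infty)} \lesssim \varepsilon^{-1}\lu^2\whu, \qquad
  \ltwo{g_x} \lesssim \varepsilon^{-3/2}\lu^2\whu,
\]
where~\eqref{hypothesis.u} is invoked each time to bound cubic-in-$\lu$ contributions by quadratic ones.

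Plugging everything into~\eqref{estimate.burgers.forced}, the first term $\ltwo{g_x}$ already delivers the target bound $\varepsilon^{-3/2}\lu^2\whu$, and the two middle terms $\|w_x\|_{L^2(L^\infty)}\varepsilon^{-1/2}\ltwo{g}$ and $\linf{w}\varepsilon^{-1}\ltwo{g}$ are cubic in $\lu$, hence, by~\eqref{hypothesis.u}, smaller than the target by factors of $\varepsilon$ and $\varepsilon^{3/2}$ respectively. The delicate term---the only real bookkeeping obstacle---is the last one, $\|g\|_{L^2(L^\infty)}\varepsilon^{-3/2}\ltwo{g}$, which evaluates to $(\varepsilon^{-3/2}\lu^2\whu)^2$, i.e.\ exactly the square of the target. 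This is absorbed by noting that the target itself is tiny: $\varepsilon^{-3/2}\lu^2\whu \leq \lu\whu \leq \lu^2 \leq \varepsilon^3$, so its square is a fortiori dominated by the target. This yields $\ltwo{r_t} \lesssim \varepsilon^{-3/2}\lu^2\whu$.

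Finally, since $r(0,\cdot) \equiv 0$, writing $r(t,x) = \int_0^t r_t(s,x)\ds$ and applying Cauchy--Schwarz in $s$ gives $\ltwo{r} \lesssim \ltwo{r_t}$, which closes~\eqref{est.r}. The main difficulty is therefore not any single inequality but the careful matching of powers of $\varepsilon$ and $\lu$ across the full four-term expansion of~\eqref{estimate.burgers.forced}, together with the recognition that the scaling $\lu \leq \varepsilon^{3/2}$ is precisely what is needed to absorb every cubic and quartic remainder into the quadratic target.
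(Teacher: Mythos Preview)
Your proof is correct and follows essentially the same route as the paper: define $r = y - a - b$, cast system~\eqref{system.r} into the framework of Lemma~\ref{lemma.burgers.forced} with $w = -(a+b)$ and $g = -ab - \tfrac{1}{2}b^2$, derive the same three key bounds on $\ltwo{g}$, $\|g\|_{L^2(L^\infty)}$, $\ltwo{g_x}$, and plug into~\eqref{estimate.burgers.forced}. Your term-by-term bookkeeping of the four contributions in~\eqref{estimate.burgers.forced} is in fact more explicit than the paper's, which simply asserts that plugging the six preliminary estimates into~\eqref{estimate.burgers.forced} yields~\eqref{est.r.7}; in particular your observation that the quartic term $\|g\|_{L^2(L^\infty)}\varepsilon^{-3/2}\ltwo{g}$ equals the square of the target and is therefore absorbed via $\varepsilon^{-3/2}\lu^2\whu \lesssim 1$ is a useful clarification. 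One minor quibble: uniqueness is more cleanly obtained directly from the uniqueness of $y$, $a$, $b$ (hence of $r = y - a - b$) rather than by applying~\eqref{estimate.burgers.forced} to a difference, since the difference of two nonlinear solutions does not satisfy the same equation.
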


\begin{proof}
The existence of $r \in X_1$ can be deduced directly from the equality 
$r = y - a - b$. To prove the estimate, we will use 
Lemma~\ref{lemma.burgers.forced} with a null initial data, $w = -(a + b)$
and $g = - ab - \frac{1}{2}b^2$. To apply estimate~\eqref{estimate.burgers.forced},
we start by computing the norms of $w$ and $g$ that we need.
\textbf{We start with $w = - (a + b)$.} Combining~\eqref{est.a.1},~\eqref{est.b.2} 
and~\eqref{hypothesis.u} gives:
\begin{equation} \label{est.r.1}
 \linf{w} 
 \leq \linf{a} + \linf{b} 
 \lesssim \lu + \varepsilon^{-3/2} \lu^2
 \lesssim \lu.
\end{equation}
In particular,~\eqref{est.r.1} and~\eqref{hypothesis.u} yield:
\begin{equation} \label{est.r.2}
 \gamma 
 = \frac{1}{\varepsilon} \size{w}{L^2(L^\infty)}^2 
 \leq \frac{1}{\varepsilon} \linf{w}^2 
 \leq \frac{1}{\varepsilon} \lu^2
 \lesssim 1.
\end{equation}
Finally, combining~\eqref{est.a.3} and~\eqref{est.b.3}:
\begin{equation} \label{est.r.3}
 \size{w_x}{L^2(L^\infty)} 
 \leq \size{a_x}{L^2(L^\infty)} + \size{b_x}{L^2(L^\infty)} 
 \lesssim \varepsilon^{-1} \lu + \varepsilon^{-3/2} \lu^2
 \lesssim \varepsilon^{-1} \lu.
\end{equation}
\textbf{We move on to $g = - ab - \frac{1}{2}b^2$.}
Combining~\eqref{est.a.1},~\eqref{est.b.1},~\eqref{est.b.2} 
and~\eqref{hypothesis.u} gives:
\begin{equation} \label{est.r.4}
 \begin{split}
  \ltwo{g}
  & \leq \left( \linf{a} + \linf{b} \right) \ltwo{b} \\
  & \leq \left( \lu + \varepsilon^{-3/2} \lu^2 \right) 
    \varepsilon^{-1/2} \lu \whu \\
  & \leq \varepsilon^{-1/2} \lu^2 \whu.
 \end{split}
\end{equation}
Combining~\eqref{est.a.1},~\eqref{est.b.1},~\eqref{est.b.2} 
and~\eqref{hypothesis.u}, we obtain:
\begin{equation} \label{est.r.5}
 \begin{split}
  \size{g}{L^2(L^\infty)}
  & \leq \left( \linf{a} + \linf{b} \right) \cdot \size{b}{L^2(L^\infty)} \\
  & \leq \left( \linf{a} + \linf{b} \right) \cdot \ltwo{b_x} \\
  & \lesssim \varepsilon^{-1} \lu^2 \whu.
 \end{split}
\end{equation}
Lastly, mixing~\eqref{est.a.1},~\eqref{est.a.3},~\eqref{est.b.1},~\eqref{est.b.2}
and~\eqref{hypothesis.u} gives:
\begin{equation} \label{est.r.6}
 \begin{split}
  \ltwo{g_x}
  & \leq \size{a_x}{L^2(L^\infty)}\size{b}{L^\infty(L^2)} 
    + \linf{a} \ltwo{b_x} + \linf{b} \ltwo{b_x} \\
  & \lesssim \varepsilon^{-3/2} \lu^2\whu + \varepsilon^{-1}\lu^2\whu
    + \varepsilon^{-5/2} \lu^3 \whu \\
  & \lesssim \varepsilon^{-3/2} \lu^2\whu.
 \end{split}
\end{equation}
\textbf{Eventually,} plugging estimates~\eqref{est.r.1}-\eqref{est.r.6} into 
the main estimation~\eqref{estimate.burgers.forced}, yields:
\begin{equation} \label{est.r.7}
 \ltwo{r_t} \lesssim \varepsilon^{-3/2} \lu^2 \whu.
\end{equation}
From~\eqref{est.r.7} and the initial condition $r(0,\cdot) = 0$, we 
conclude~\eqref{est.r}.
\end{proof}

\begin{lemma} \label{lemma.r.rho}
Under the assumption~\eqref{hypothesis.u}, we have:
\begin{equation} \label{est.r.rho}
  \left| \langle \rho, r(1, \cdot) \rangle \right|
  \lesssim
  \varepsilon^{-3/2} \lu^2 \whu^2.
\end{equation}
\end{lemma}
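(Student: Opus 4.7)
The plan is to apply Lemma~\ref{lemma.z.rho} to the residue $r$ solving~\eqref{system.r}, converting the pointwise projection into a space-time integral
\begin{equation*}
  \langle \rho, r(1,\cdot)\rangle = \iint_{(0,1)^2} \Phi(1-t,x) \cdot \bigl(-rr_x - [(a+b)r]_x - [ab+\tfrac{1}{2}b^2]_x\bigr) \, dx \, dt.
\end{equation*}
Each source term is a spatial derivative, so I would integrate by parts in $x$, picking up no boundary contributions thanks to the Dirichlet data on $\Phi$, $a$, $b$, and $r$, to obtain
\begin{equation*}
  \langle \rho, r(1,\cdot)\rangle = \iint \Phi_x(1-t,x) \cdot \bigl(\tfrac{1}{2}r^2 + (a+b)r + ab + \tfrac{1}{2}b^2\bigr) \, dx \, dt.
\end{equation*}

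The cornerstone of the proof is a parity cancellation. A direct check on the explicit polynomial~\eqref{def.rho} gives $\rho(1-x)=-\rho(x)$, so $\rho$ is antisymmetric about $x=\tfrac12$. The heat flow with Dirichlet data preserves this antisymmetry, hence $\Phi(t,\cdot)$ is antisymmetric and $\Phi_x(t,\cdot)$ is symmetric about $x=\tfrac12$. Combined with $a$ symmetric (its source is constant in $x$) and $b$ antisymmetric (driven by $-aa_x$, a product of symmetric and antisymmetric), the integrand $\Phi_x \cdot a \cdot b$ is antisymmetric about $x=\tfrac12$ and hence integrates to zero: $\iint \Phi_x \cdot ab \, dx \, dt = 0$. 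This kills the most dangerous cross term, which is only cubic in $u$ and would otherwise carry only one factor of $|U|_{H^{-1/4}}$.

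For the three remaining terms I would use $\|\Phi_x\|_\infty \lesssim 1$ from Lemma~\ref{lemma.phi} and the decomposition $a=U+\tilde a$. The contributions $\iint \Phi_x r^2$, $\iint \Phi_x b^2$, and $\iint \Phi_x b r$ are controlled by $\|r\|_2^2$, $\|b\|_2^2$, and $\|b\|_2\|r\|_2$ respectively; each fits within $\varepsilon^{-3/2}|u|_2^2|U|_{H^{-1/4}}^2$ once the hypothesis $|u|_2\leq\varepsilon^{3/2}$ is used to absorb superfluous powers of $|u|_2$ into powers of $\varepsilon$. The $\iint \Phi_x \tilde a r$ piece is bounded by $\|\tilde a\|_2 \|r\|_2$, which, thanks to the sharp bound $\|\tilde a\|_2 \lesssim |U|_{H^{-1/4}}$ from Lemma~\ref{lemma.estimates.a}, already carries both factors of $|U|_{H^{-1/4}}$.

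The hard part will be the cross piece $\iint \Phi_x U r$. Here I would once more invoke parity: since $U$ is constant in $x$ and $\Phi_x$ is symmetric, only the even part $r_e$ of $r$ contributes. Since the leading-order source of $r_e$ is the even term $-[ab]_x$ and $\|ab\|_2$ already contains one factor of $|U|_{H^{-1/4}}$, Lemma~\ref{lemma.heat.fx} yields the improved estimate $\|r_e\|_2 \lesssim \varepsilon^{-1}|u|_2^2 |U|_{H^{-1/4}}$. Rewriting the integral as $\int_0^1 U(t)\,g_e(t)\,dt$ with $g_e(t)=\int \Phi_x(1-t,x)\, r_e(t,x)\,dx$, I would then use $H^{-1/4}$--$H^{1/4}$ duality in time, interpolate $\|g_e\|_{H^{1/4}} \lesssim \|g_e\|_{L^2}^{3/4}\|g_e\|_{H^1}^{1/4}$, and control $\|g_e\|_{H^1}$ using the $\|\Phi_{tx}\|_\infty \lesssim \varepsilon$ estimate of Lemma~\ref{lemma.phi} together with the $\|r_t\|_2$ bound from Lemma~\ref{lemma.r}. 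The parity cancellation of $\iint \Phi_x ab$ is what vindicates the specific odd-around-$\tfrac12$ profile chosen in~\eqref{def.rho}; without it, the cubic term $ab$ could only be bounded with a single factor of $|U|_{H^{-1/4}}$, breaking the estimate.
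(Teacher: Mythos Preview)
Your overall strategy matches the paper's: apply Lemma~\ref{lemma.z.rho}, integrate by parts to put the derivative on $\Phi$, exploit the parity cancellation $\iint \Phi_x\, ab = 0$, split $a = U + \tilde a$, and handle $\iint \Phi_x \tilde a\, r$, $\iint \Phi_x b^2$, $\iint \Phi_x b\,r$, $\iint \Phi_x r^2$ by the direct $L^2$ bounds you describe. The only divergence is in the ``hard'' term $\int_0^1 U(t)\bigl(\int_0^1 \Phi_x(1-t,x)\,r(t,x)\,dx\bigr)\,dt$.

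There the paper takes a shorter route. Setting $v(t)=\int_0^1 \Phi_x(1-t,x)\,r(t,x)\,dx$, it uses the cruder $H^{-1}/H^1_0$ duality $|\langle U,v\rangle|\le |U|_{H^{-1}}\,|v_t|_{L^2}$ together with the embedding $|U|_{H^{-1}}\lesssim |U|_{H^{-1/4}}$. Since $|v_t|_{L^2}\lesssim \|\Phi_{tx}\|_\infty\|r\|_2 + \|\Phi_x\|_\infty\|r_t\|_2 \lesssim \|r_t\|_2$ by Lemma~\ref{lemma.phi}, and Lemma~\ref{lemma.r} already gives $\|r_t\|_2\lesssim \varepsilon^{-3/2}|u|_2^2|U|_{H^{-1/4}}$, the second factor of $|U|_{H^{-1/4}}$ drops out for free. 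Your approach---restricting to the even part $r_e$, deriving an improved bound $\|r_e\|_2\lesssim\varepsilon^{-1}|u|_2^2|U|_{H^{-1/4}}$ via Lemma~\ref{lemma.heat.fx}, then interpolating $L^2$ and $H^1$ to reach $H^{1/4}$---is correct and even yields a better $\varepsilon$-power on this particular term, but it requires more bookkeeping: you must track all sources feeding $r_e$, namely $-(ab)_x$, $-(ar_o)_x$, $-(br_e)_x$, $-(r_e r_o)_x$, and verify the last three are subordinate under~\eqref{hypothesis.u}. The paper's $H^{-1}$ duality is the cleaner path since the target bound~\eqref{est.r.rho} is saturated by the other terms anyway.
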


\begin{proof}
This lemma is not a direct consequence of Lemma~\ref{lemma.r}. Indeed, 
estimate~\eqref{est.r} only involves $|U|_{H^{-1/4}}$ with an exponent
of 1. To obtain estimate~\eqref{est.r.rho}, we need to work a little more.
Using Lemma~\ref{lemma.z.rho} and equation~\eqref{system.r}, we can compute:
\begin{equation} \label{eq.lemma.rrho.1}
  \begin{split}
  \langle \rho, r(1, \cdot) \rangle 
  & = \int_0^1\int_0^1 \Phi_x \left[  ab + \frac{1}{2}b^2 + (a+b)r 
        + \frac{1}{2}r^2 \right]  \\
  & = \int_0^1\int_0^1 \Phi_x(1-t,x) U(t) r(t,x) \dx \dt
    + \int_0^1\int_0^1 \Phi_x \left[ \frac{1}{2}b^2 + (\tilde{a} + b)r 
        + \frac{1}{2}r^2 \right].     
  \end{split}
\end{equation} 
We used the fact that $a = U + \tilde{a}$ and the fact that $\Phi_x a b$
is an odd function, whose space integral is thus null.
The second term is easy to estimate, because we know how to estimate 
$\tilde{a}, b$ and $r$ in $L^2$ using $|U|_{H^{-1/4}}$. Thus, we know it will be 
smaller than $|U|_{H^{-1/4}}^2$. The first term needs more care.
\begin{equation} \label{eq.lemma.rrho.2}
 \int_0^1 U(t) \int_0^1 \Phi_x(1-t,x)r(t,x) \dx \dt = 
 \langle U, v \rangle_{H^{-1}, H^1_0},
\end{equation}
where we introduce $v(t) = \int_0^1 \Phi_x(t,x)r(t,x) \dx$ for $t \in (0,1)$.
Since $\Phi(0, \cdot) \equiv 0$ and $r(0, \cdot) \equiv 0$, $v(0) = v(1) = 0$.
Now we compute its $H^1_0$ norm:
\begin{equation} \label{eq.lemma.rrho.3}
 \begin{split}
  \int_0^1 v_t(t)^2 \dt
  & = \int_0^1 \left( \int_0^1 \Phi_{tx}(1-t,x)r(t,x) 
    + \Phi_x(1-t,x)r_t(t,x) \dx \right)^2 \dt \\
  & \leq 2 \int_0^1 \int_0^1 \Phi_{tx}^2r^2 + \Phi_x^2r_t^2 \\
  & \leq 2 \left( \linf{\Phi_{tx}}^2 \ltwo{r}^2
    + \linf{\Phi_{x}}^2 \ltwo{r_t}^2\right) \\
  & \lesssim \varepsilon^2 \ltwo{r}^2 + \ltwo{r_t}^2 \\
  & \lesssim \ltwo{r_t}^2,
 \end{split}
\end{equation}
where we used estimates~\eqref{est.phi.2} and~\eqref{est.phi.4} to estimate 
$\Phi$. Let us finish the proof.
\begin{equation} \label{eq.rrho}
  \begin{alignedat}{2}
    \left| \langle \rho, r(1, \cdot) \rangle \right|
    & \leq \left| \langle U, v \rangle_{H^{-1}, H^1_0} \right| 
      + \left| \int_0^1 \int_0^1 \Phi_x \left( \frac{1}{2}b^2 
      + (\tilde{a} + b)r + \frac{1}{2}r^2 \right) \right| 
    && \quad \textrm{using~\eqref{eq.lemma.rrho.1} and~\eqref{eq.lemma.rrho.2},} \\
    & \lesssim |U|_{H^{-1}} \ltwo{r_t}
      + \linf{\Phi_x}
      \left( \ltwo{b}^2 + \ltwo{\tilde{a}} \ltwo{r} + \ltwo{r}^2 \right)
    && \quad \textrm{using~\eqref{eq.lemma.rrho.3}}.
  \end{alignedat}
\end{equation}
From~\eqref{est.phi.2}, we know that $\linf{\Phi_x} \lesssim 1$. Moreover,
$|U|_{H^{-1}}\lesssim |U|_{H^{-1/4}}$. Thanks 
to~\eqref{est.a.2},~\eqref{est.b.1},~\eqref{est.r} and~\eqref{hypothesis.u},
we conclude from~\eqref{eq.rrho} that 
$ \left| \langle \rho, r(1, \cdot) \rangle \right|
\lesssim \varepsilon^{-3/2}\lu^2\whu^2$.
This concludes the proof of Lemma~\ref{lemma.r.rho}.
\end{proof}

\subsection{A first drifting result concerning reachability from zero}

The null reachability problem consists in 
computing the set of states that can be reached in time~$T$, starting from 
$y(0,x) \equiv 0$ using a control~$u$. Of course, when dealing with viscous 
equations like~\eqref{system.burgers}, one may only hope to reach sufficiently
smooth states. Theorem~\ref{thm.reachability} tells us that, if the time $T$
is too small, we can never reach a state $y^1(x)$ in time $T$ if 
$\langle \rho, y^1 \rangle < 0$, whatever the control $u$ (and the smoothness
of $y^1$). In some sense, the state \emph{drifts} towards the direction~$+\rho$,
as a result of the action of the control.

\begin{theorem} \label{thm.reachability}
There exist $T_2, k_2 > 0$ such that, for any $0 < T < T_2$
and any $u \in L^2(0,T)$ such that $|u|_{L^2(0,T)}\leq 1$, the solution 
$y \in X_T$ to system~\eqref{system.burgers} starting from the null initial 
condition $y(0,x) \equiv 0$ satisfies:
\begin{equation}
 \langle \rho, y(T,\cdot) \rangle 
 \geq 
 k_2 \left|U\right|_{H^{-1/4}(0,T)}^2,
\end{equation}
where $U$, as above, is the primitive of $u$ such that $U(0) = 0$.
\end{theorem}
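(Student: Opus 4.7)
The plan is to reduce to the scaled system~\eqref{system.y} via the change of variables introduced in Subsection~\ref{subsection.scaling}, apply the coercivity of the quadratic kernel established in Lemma~\ref{lemma.keps.coercive} together with the residue bound of Lemma~\ref{lemma.r.rho}, and then translate the inequality back to the original variables. Concretely, given $T \in (0,T_2)$ with $T_2 \leq 1$ and $u \in L^2(0,T)$ with $|u|_{L^2(0,T)} \leq 1$, I set $\varepsilon = T$ and $\tilde{y}(t,x) = \varepsilon y(\varepsilon t, x)$, $\tilde{u}(t) = \varepsilon^2 u(\varepsilon t)$, so that $\tilde{y}$ is the solution of~\eqref{system.y} on $(0,1)\times(0,1)$ driven by $\tilde{u}$. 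A direct change of variables gives $|\tilde{u}|_{L^2(0,1)} = T^{3/2} |u|_{L^2(0,T)} \leq \varepsilon^{3/2}$, so the smallness hypothesis~\eqref{hypothesis.u} is automatically satisfied.

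Next, I decompose $\tilde{y} = a + b + r$ according to systems~\eqref{system.a}, \eqref{system.b}, \eqref{system.r}, and project against $\rho$ at time $t = 1$:
\begin{equation*}
  \langle \rho, \tilde{y}(1,\cdot) \rangle
  = \langle \rho, a(1,\cdot) \rangle
  + \langle \rho, b(1,\cdot) \rangle
  + \langle \rho, r(1,\cdot) \rangle.
\end{equation*}
The first term vanishes by parity: $a$ solves a heat equation with $x$-independent source and symmetric zero boundary data, hence $a(t,\cdot)$ is even about $x = 1/2$, while the explicit polynomial~\eqref{def.rho} satisfies $\rho(1-x) = -\rho(x)$. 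The second term is precisely $\langle K^\varepsilon \tilde{u}, \tilde{u} \rangle$ by Lemma~\ref{lemma.k}, which by Lemma~\ref{lemma.keps.coercive} is bounded below by $k_1 \sqrt{\varepsilon}\, |\tilde{U}|_{H^{-1/4}(0,1)}^2$, where $\tilde{U}$ is the primitive of $\tilde{u}$ vanishing at $0$. The third term is controlled by Lemma~\ref{lemma.r.rho} by $C\varepsilon^{-3/2} |\tilde{u}|_2^2\, |\tilde{U}|_{H^{-1/4}(0,1)}^2 \leq C \varepsilon^{3/2} |\tilde{U}|_{H^{-1/4}(0,1)}^2$. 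Since $\varepsilon^{3/2} = o(\sqrt{\varepsilon})$ as $\varepsilon \to 0$, for all sufficiently small $T = \varepsilon$ the coercive term dominates and
\begin{equation*}
  \langle \rho, \tilde{y}(1,\cdot) \rangle
  \geq \tfrac{k_1}{2} \sqrt{\varepsilon}\, |\tilde{U}|_{H^{-1/4}(0,1)}^2.
\end{equation*}

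It remains to translate this lower bound back to the original variables. One has $\langle \rho, y(T,\cdot) \rangle = T^{-1} \langle \rho, \tilde{y}(1,\cdot) \rangle$, and the identity $\tilde{U}(t) = T\, U(T t)$ combined with the standard scaling of the homogeneous $\dot{H}^{-1/4}$ norm under dilation (extending $U$ by zero outside $(0,T)$ and computing on the Fourier side) yields $|\tilde{U}|_{H^{-1/4}(0,1)}^2 = T^{1/2} |U|_{H^{-1/4}(0,T)}^2$ up to a universal constant. Multiplying these two relations together gives the desired inequality with some $k_2 > 0$. The real subtlety is not in this scaling step but in the delicate balance already exploited above: the coercivity of the quadratic kernel is of order $\sqrt{\varepsilon}$, whereas the nonlinear residue on the projection is only of order $\varepsilon^{3/2}$, and exactly this margin of $\varepsilon$ is what allows the $b$-contribution to overwhelm the residue at small times.
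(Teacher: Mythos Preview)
Your proof is correct and follows essentially the same route as the paper: scale to viscosity $\varepsilon = T$, decompose $\tilde{y} = a + b + r$, invoke Lemma~\ref{lemma.keps.coercive} for the $b$-term and Lemma~\ref{lemma.r.rho} for the residue, then undo the scaling. You are slightly more explicit than the paper in one useful place---you spell out that $\langle \rho, a(1,\cdot)\rangle = 0$ by parity, which the paper leaves implicit---and your scaling computation for the $H^{-1/4}$ norm matches the paper's Fourier-side justification.
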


\begin{proof}
 We are going to use the scaling argument introduced in 
 paragraph~\ref{subsection.scaling}. Thus, from now on, we reintroduce the 
 tilda signs for functions defined on the scaled time interval $(0,1)$.
 From Lemma~\ref{lemma.keps.coercive}, we know that, for 
 $\varepsilon < \varepsilon_1$, 
 $\langle K^\varepsilon \tilde{u}, \tilde{u} \rangle 
 \geq k_1 \sqrt{\varepsilon} |\tilde{U}|_{H^{-1/4}}^2$.
 From Lemma~\ref{lemma.r.rho}, we know that there exists $c_2$ such that, 
 as soon as $|\tilde{u}|_2 \leq \varepsilon^{3/2}$, 
 $\left|\langle \rho, r(1, \cdot) \rangle\right| 
 \leq c_2 \varepsilon^{3/2} |\tilde{U}|_{H^{-1/4}}^2$.
 Hence, if we consider $\tilde{y}$ the solution to~\eqref{system.y}, write 
 $\tilde{y} = a + b + r$, for any $0 < k_2 < k_1$, there exists 
 $\varepsilon_2 > 0$ such that, for $\varepsilon < \varepsilon_2$, 
 $\langle \rho, \tilde{y}(1,\cdot) \rangle 
 \geq k_2 \sqrt{\varepsilon} |\tilde{U}|_{H^{-1/4}}^2$.
 Recalling that $\tilde{u}(t) = \varepsilon^2 u(\varepsilon t)$ and 
 $\tilde{y}(t,x) = \varepsilon y(\varepsilon t, x)$, we obtain:
 \begin{equation} \label{eq.thm.reach.1}
   \langle \rho, y(\varepsilon,\cdot) \rangle 
   = \left\langle \frac{1}\varepsilon \tilde{y}(1,\cdot), \rho \right\rangle 
   \geq k_2 \varepsilon^{-1/2} |\tilde{U}|_{H^{-1/4}(0,1)}^2
   \geq k_2 |U|_{H^{-1/4}(0,\varepsilon)}^2,
 \end{equation}
 under the assumption:
 \begin{equation} \label{eq.thm.reach.2}
   |\tilde{u}|_{L^2(0,1)} \leq \varepsilon^{3/2}
   \quad \Leftrightarrow \quad
   \left|u\right|_{L^2(0,\varepsilon)} \leq 1.
 \end{equation}
 Theorem~\ref{thm.reachability} follows from~\eqref{eq.thm.reach.1}
 and~\eqref{eq.thm.reach.2} with $T_2 = \varepsilon_2$. 
 Equation~\eqref{eq.thm.reach.2} is obtained via a direct change of variable.
 To establish~\eqref{eq.thm.reach.1}, one can compute the weak $H^{-1/4}$
 norms using Fourier transforms.
\end{proof}

\subsection{Persistance of projections in absence of control}

We start by remarking that, when no control is used, the projection of the state
against any fixed profile $\mu \in L^2(0,1)$ remains almost constant 
in small time.

\begin{lemma} \label{lemma.persistance}
 Let $T > 0$, $\mu \in L^2(0,1)$ and $y^0 \in H^1_0(0,1) \cap H^2(0,1)$. 
 Assume that $|y^0|_{H^2} \leq 1$. Consider $y \in X_T$ the 
 solution to system~\eqref{system.burgers} with initial data $y^0$ and null 
 control ($u = 0$). Then,
 \begin{equation} \label{eq.lemma.persistance}
  \langle \mu, y(T, \cdot) \rangle = 
  \langle \mu, y^0 \rangle + \mathcal{O}\left(T^{1/2} |\mu|_2
  |y^0|_{H^2} \right).
 \end{equation}
\end{lemma}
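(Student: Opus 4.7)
The plan is to write the difference $\langle \mu, y(T,\cdot)\rangle - \langle \mu, y^0\rangle$ as a time integral of $\langle \mu, y_t\rangle$ and then apply Cauchy--Schwarz, using the existing well-posedness estimate of Lemma~\ref{lemma.well-posedness} to control $\|y_t\|_2$ in terms of $|y^0|_{H^2}$.

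First, since $y \in X_T$, the map $t \mapsto \langle \mu, y(t,\cdot)\rangle$ is absolutely continuous on $[0,T]$ with derivative $\langle \mu, y_t(t,\cdot)\rangle$, so
\begin{equation*}
\langle \mu, y(T,\cdot)\rangle - \langle \mu, y^0\rangle
= \int_0^T \langle \mu, y_t(t,\cdot)\rangle \dt.
\end{equation*}
Applying Cauchy--Schwarz first in $x$ and then in $t$, I get
\begin{equation*}
\left|\langle \mu, y(T,\cdot)\rangle - \langle \mu, y^0\rangle\right|
\leq |\mu|_2 \int_0^T |y_t(t,\cdot)|_2 \dt
\leq T^{1/2}\, |\mu|_2\, \|y_t\|_{L^2((0,T)\times(0,1))}.
\end{equation*}

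It remains to bound $\|y_t\|_2$ by $|y^0|_{H^2}$. Lemma~\ref{lemma.well-posedness} applied with $u=0$ yields
\begin{equation*}
\|y_t\|_2 \lesssim |y^0|_4^2 + |y^0_x|_2.
\end{equation*}
The linear term is trivially bounded by $|y^0|_{H^2}$. For the quadratic term, the one-dimensional Sobolev embedding $H^2(0,1)\hookrightarrow L^\infty(0,1)$ gives $|y^0|_\infty\lesssim |y^0|_{H^2}\leq 1$, hence
\begin{equation*}
|y^0|_4^2 \leq |y^0|_\infty\, |y^0|_2 \lesssim |y^0|_{H^2}^2 \leq |y^0|_{H^2},
\end{equation*}
where the last inequality uses the normalization $|y^0|_{H^2}\leq 1$. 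Combining these gives $\|y_t\|_2 \lesssim |y^0|_{H^2}$, which when plugged into the Cauchy--Schwarz bound above yields~\eqref{eq.lemma.persistance}.

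There is no real obstacle here; the argument is a direct application of the a priori estimate already established. The only point to be mildly careful about is handling the quadratic $|y^0|_4^2$ term, which is exactly where the smallness assumption $|y^0|_{H^2}\leq 1$ is used to linearize the bound.
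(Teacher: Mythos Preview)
Your proof is correct and follows essentially the same approach as the paper: both bound $\langle \mu, y(T,\cdot)\rangle - \langle \mu, y^0\rangle$ by $T^{1/2}|\mu|_2\|y_t\|_2$ and then control $\|y_t\|_2\lesssim |y^0|_{H^2}$ using the smallness assumption. The only cosmetic difference is that the paper writes $y = y^0 + z$ and invokes Lemma~\ref{lemma.burgers.forced} on $z$, whereas you quote Lemma~\ref{lemma.well-posedness} directly on $y$; since $z_t = y_t$ and Lemma~\ref{lemma.well-posedness} is itself a corollary of Lemma~\ref{lemma.burgers.forced}, the two arguments are equivalent.
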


\begin{proof}
 We decompose $y = y^0 + z$. Hence, $z$ is the solution to:
 \begin{equation} \label{system.lemma.persistance}
 \left\{ 
 \begin{aligned}
    z_t - z_{xx} + z z_x & = (y^0z)_x + y^0_{xx} - y^0 y^0_x
    && \text{in } (0,T) \times (0,1), \\
    z(t,0) &= 0 && \text{in } (0,T), \\
    z(t,1) &= 0 && \text{in } (0,T), \\
    z(0,x) &= 0 && \text{in } (0,1).
 \end{aligned}
 \right.
 \end{equation}
 Thus, we can apply Lemma~\ref{lemma.burgers.forced} with $w(t,x) = y^0(x)$
 and $g(t,x) = y^0_x - \frac{1}{2}(y^0)^2$ 
 to system~\eqref{system.lemma.persistance}. 
 Estimate~\eqref{estimate.burgers.forced} tells us that 
 $\ltwo{z_t} \lesssim |y^0|_{H^2}$. Here, we need the assumption that 
 $|y^0|_{H^2} \leq C$, where $C$ is any fixed constant, in order to avoid
 propagating non-linear estimates (involving exponentials). 
 Since $z(0, x) \equiv 0$, we can write:
 \begin{equation} \label{eq.lemma.pers.1}
  \left| \langle \mu, z(T, \cdot) \rangle \right| 
  = \left| \int_0^T \int_0^1 z_t \mu \right| 
  \leq T^{1/2} \ltwo{z_t} |\mu|_2.
 \end{equation}
 The conclusion~\eqref{eq.lemma.persistance} follows 
 from~\eqref{eq.lemma.pers.1}.
\end{proof}

\subsection{Proof of Theorem~\ref{thm.frederic}}

Let us finish the proof of Theorem~\ref{thm.frederic}. We consider an initial 
data of the form $y^\delta = \delta \rho$, where $\delta > 0$ can be picked as
small as we need and $\rho$ is defined in~\eqref{def.rho}. Please note that 
many other initial data cannot be driven back to zero in short time with small 
controls. However, to prove Theorem~\ref{thm.frederic}, it is sufficient to 
exhibit a single sequence.

For $T > 0$, $u \in L^2(0,T)$ and $\delta > 0$, we consider $y \in X_T$, the 
solution to system~\eqref{system.burgers} with initial data $y^\delta$ and 
control $u$. To isolate the different contributions, we decompose $y$ as
$\bar{y} + y^u + z$, where:
\begin{equation} \label{eq.proof.1}
 \left\{ 
 \begin{aligned}
    \bar{y}_t - \bar{y}_{xx} + \bar{y} \bar{y}_x & = 0
    && \text{in } (0,T) \times (0,1), \\
    \bar{y}(t,0) &= 0 && \text{in } (0,T), \\
    \bar{y}(t,1) &= 0 && \text{in } (0,T), \\
    \bar{y}(0,x) &= y^\delta && \text{in } (0,1),
 \end{aligned}
 \right.
\end{equation}
\begin{equation} \label{eq.proof.2}
 \left\{ 
 \begin{aligned}
    y^u_t - y^u_{xx} + y^u y^u_x & = u(t)
    && \text{in } (0,T) \times (0,1), \\
    y^u(t,0) &= 0 && \text{in } (0,T), \\
    y^u(t,1) &= 0 && \text{in } (0,T), \\
    y^u(0,x) &= 0 && \text{in } (0,1),
 \end{aligned}
 \right.
\end{equation}
\begin{equation} \label{eq.proof.3}
 \left\{ 
 \begin{aligned}
    z_t - z_{xx} + z z_x & = - [(\bar{y}+y^u)z]_x - [\bar{y}y^u]_x
    && \text{in } (0,T) \times (0,1), \\
    z(t,0) &= 0 && \text{in } (0,T), \\
    z(t,1) &= 0 && \text{in } (0,T), \\
    z(0,x) &= 0 && \text{in } (0,1).
 \end{aligned}
 \right.
\end{equation}
Hence, $\bar{y}$ captures the free movement starting from the initial data 
$y^\delta$ while $y^u$ corresponds to the action of the control starting from
a null initial data. Systems~\eqref{eq.proof.1}-\eqref{eq.proof.3} allow us to
decouple these two contributions. The term $z$ is a small residue with 
homogeneous boundary and initial data.

First, let us apply Lemma~\ref{lemma.well-posedness} to 
system~\eqref{eq.proof.1}. Estimates~\eqref{estimate.burgers}
and~\eqref{eq.maximum.burgers} yield:
\begin{equation} \label{eq.proof.4}
 \begin{split}
  \ltwo{\bar{y}_{xx}} + \ltwo{\bar{y}_x} + \ltwo{\bar{y}_t}
  & \lesssim \delta, \\
  \linf{\bar{y}}
  & \leq |y^0|_\infty \lesssim \delta.
 \end{split}
\end{equation}
Similarly, we apply Lemma~\ref{lemma.well-posedness} 
to system~\eqref{eq.proof.2}. If we assume that $\lu \leq 1$ and $T \leq 1$, 
we obtain:
\begin{equation} \label{eq.proof.5}
 \begin{split}
  \ltwo{y^u_{xx}} + \ltwo{y^u_x} + \ltwo{y^u_t}
  & \lesssim \lu, \\
  \linf{y^u}
  & \leq \lu.
 \end{split}
\end{equation}
Next, we look at system~\eqref{eq.proof.3}. We apply 
Lemma~\ref{lemma.burgers.forced} with $w = - (\bar{y} + y^u)$,
$g = - \bar{y} y^u$ and a null initial data. Combining~\eqref{eq.proof.4}
and~\eqref{eq.proof.5} yields the necessary estimates:
\begin{align} 
 \label{eq.proof.6}
 \ltwo{g} + \ltwo{g_x} + \size{g}{L^2(L^\infty)} 
 & \lesssim \delta \lu, \\ 
 \label{eq.proof.7}
 \linf{w} + \size{w}{L^2(L^\infty)} \size{w}{L^2(L^\infty)} 
 & \lesssim \delta + \lu.
\end{align}
Hence,~\eqref{eq.proof.7} yields $\gamma \lesssim 1$. Therefore, 
plugging~\eqref{eq.proof.6} and~\eqref{eq.proof.7} 
into~\eqref{estimate.burgers.forced} gives:
\begin{equation} \label{eq.proof.8}
 \ltwo{z_{xx}} + \ltwo{z_t} \lesssim \delta \lu.
\end{equation}
Once again, we use the initial condition $z(0,\cdot)\equiv 0$ 
and~\eqref{eq.proof.8} to compute:
\begin{equation} \label{eq.proof.9}
 \left| \langle \rho, z(T, \cdot) \rangle \right| 
 = \left| \int_0^T \int_0^1 z_t \rho \right| 
 \lesssim T^{1/2} \delta \lu.
\end{equation}
Now, assuming $T \leq T_2$, we can combine Theorem~\ref{thm.reachability} and 
Lemma~\ref{lemma.persistance} with~\eqref{eq.proof.9} to obtain:
\begin{equation} \label{eq.proof.10}
 \langle y(T, \cdot), \rho \rangle 
 \geq \delta |\rho|^2_2 + k_2 \whu^2 
 + \mathcal{O}\left(T^{1/2}\delta(1+\lu)\right).
\end{equation}
From~\eqref{eq.proof.10}, we deduce that 
$\langle \rho, y(T, \cdot) \rangle > 0$ as soon as $T$ is small enough and under
the assumption $\lu \leq 1$. Thus, we have proved Theorem~\ref{thm.frederic}
with $\eta = 1$.

\section*{Conclusion and perspectives}

We expect that the methodology followed in this paper can be used for a wide
variety of non-linear systems involving a single scalar control. Indeed, when
studying small time local controllability for some formal system 
$\dot{y} = F(y, u(t))$, the first step is always to consider the linearized 
system, $\dot{a} = \partial_y F(0) a + \partial_u F(0) u$. When this system is
controllable, fixed point or inverse mapping theorems often allow us to deduce
that the non-linear system is small time locally controllable. When the 
linearized system is not controllable, we can decompose the state $y$ as 
$a + b$, where the (linear) component $a$ is controllable and the second
component $b$ is indirectly controlled through a quadratic source term involving
$a$ (and/or, sometimes, $u$).

What our proof demonstrates, is that it is possible, even for infinite 
dimensional systems, to express projections of the second order part $b$ as
kernels acting on the control. The careful study of these kernels can then lead
ever to negative results (like it is the case here, because we prove a 
coercivity lemma), or to positive results (if the kernel is found to have both
positive and negative eigenvalues, we can hope to prove that the system can be
driven in the two opposite directions).

It is worth to be noted that the coercivity used in this paper, although it 
involves a weak $H^{-5/4}$ norm of the control $u$, is in fact pretty 
strong. Indeed, it was obtained for any small $u \in L^2$. It would have
been sufficient to prove the coercivity of the kernel $K^\varepsilon$ on the
strict subspace:
\begin{equation}
 \mathcal{V}_\varepsilon = \left\{ 
 u \in L^2(0,1), \quad a(t=1,\cdot) \equiv 0,
 \quad \textrm{where $a$ is the solution to system~\eqref{system.a}}
 \right\}.
\end{equation}
For other systems, it may be easier (or necessary) to restrict the study of the 
integral operator~$K^\varepsilon$ to the subspace~$\mathcal{V}_\varepsilon$ in
order to obtain a conclusion.

\skipline

As a perspective, 
an example of such an open problem is the small time controllability of 
the non-linear Korteweg de Vries equation for critical domains. Indeed, 
in~\cite{MR1440078}, Rosier proved that the KdV equation was small time 
locally controllable for non critical domains using the linearized system.
Then in~\cite{MR2060480}, Coron and Crépeau proved that, for the first critical
length, small time local controllability holds thanks to a third order
expansion. In~\cite{MR2338431} and~\cite{MR2504039}, Cerpa then Cerpa and Crépeau
proved that large time local controllability holds for all critical lengths.
It remains an open question to know whether small time local controllability 
holds for the second critical length. Maybe our method could be adapted to 
this setting or inspire a new proof.

\skipline

The author thanks Sergio Guerrero for having attracted his 
attention on this control problem and his advisor Jean-Michel Coron for his
support and ideas all along the elaboration of this proof.

\appendix

\section{Weakly singular integral operators}
\label{appendix.wsio}

\newcommand{\dsmooth}{\mathcal{D}\left(\R^n\right)}
\newcommand{\dsmoothp}{\mathcal{D}_p\left(\R^n\right)}
\newcommand{\dprime}{\mathcal{D}'\left(\R^n\right)}
\newcommand{\ssmooth}{\mathcal{S}\left(\R^n\right)}
\newcommand{\sprime}{\mathcal{S}'\left(\R^n\right)}
\newcommand{\wsio}{\mathrm{WSIO}(s, \delta)}
\newcommand{\hbesov}[3]{\dot{B}^{#1,#3}_{#2}}

This appendix is devoted to an explanation of Lemma~\ref{lemma.wsio}. Although
a full proof would exceed the scope of this article, we provide here a brief
overview of a general method introduced by Torres in~\cite{MR1048075} to study
the regularization properties of weakly singular integral operators. Our 
presentation is also inspired by a posterior work of Youssfi, who states a very 
closely related lemma in~\cite[Remark 6.a]{MR1397491}.

Let $n \geq 1$. Singular integral operators on $\R^n$ have been extensively 
studied since the seminal works of Calder{\'o}n and Zygmund 
(see~\cite{MR0052553} and~\cite{MR0084633}). These integral operators are
defined by the singularity of their kernel along the diagonal by an estimate
of the form:
\begin{equation} \label{sio.cz}
 \left| K(x,y) \right| \leq C \left|x-y\right|^{-n}.
\end{equation}
In estimate~\eqref{sio.cz}, the exponent $-n$ is critical. Indeed, the margins 
of such kernels are almost in $L^1_\textrm{loc}$. Here, we are interested in a 
class of integral operators for which the singularity along the diagonal is 
weaker. Thus, we expect that they exhibit better smoothing properties. 
Throughout this section, we denote 
$\Omega = \left\{ (x,y) \in \R^n\times\R^n, x \neq y \right\}$.

\begin{definition}[Weakly singular integral operator] \label{def.wsio}
Let $0 < s < 1$ and $0 < \delta \leq 1$. Consider a kernel~$K$, continuous on 
$\Omega$, satisfying:
\begin{gather} 
  \label{wsio.i}
  \left| K(x,y) \right| \leq \kappa \left|x-y\right|^{-n+s}, \\
  \label{wsio.ii}
  \left| K(x',y) - K(x,y) \right| \leq \kappa \left|x'-x\right|^\delta
  \left|x-y\right|^{-n+s-\delta}, 
  \quad \textrm{for } \left|x'-x\right| \leq \frac{1}{2}\left|x-y\right|, \\
  \label{wsio.iii}
  \left| K(x,y') - K(x,y) \right| \leq \kappa \left|y'-y\right|^\delta
  \left|x-y\right|^{-n+s-\delta}, 
  \quad \textrm{for } \left|y'-y\right| \leq \frac{1}{2}\left|x-y\right|.
\end{gather}
We introduce the associated integral operator $T_K$, continuous 
from $\dsmooth$ to $\dprime$, by defining:
\begin{equation} \label{tk.pointwise}
 \forall f \in \dsmooth, \forall x \in \R^n, \enskip 
 T_K(f)(x) = \int K(x,y) f(y) \dy.
\end{equation}
Under these assumptions, we write $T_K \in \wsio$.
\end{definition}

Definition~\ref{def.wsio} can be extended for $s \geq 1$. 
Conditions~\eqref{wsio.i},~\eqref{wsio.ii} and~\eqref{wsio.iii} must then be 
extended to the derivatives $\partial_x^\alpha \partial_y^\beta K$ for 
$\alpha + \beta \leq s$. We restrict ourselves to the simpler setting 
$0 < s < 1$ as it is sufficient for our study. Note that we define the operator
$T_K$ from its kernel $K$ (as this is the case for our applications). Proceeding 
the other way around is possible but would require more care in the sequel 
(namely, the so-called \emph{weak boundedness property} to ensure 
that~\eqref{tk.pointwise} holds;~see~\cite{MR1397491}).

\subsection{Atomic and molecular decompositions for Triebel-Lizorkin spaces}

We recall the definitions of classical functional spaces involved in this 
appendix.
Let $\varphi \in \ssmooth$ be such that $\varphi(\xi) = 0$ for $|\xi|\geq1$ 
and $\varphi(\xi) = 1$ for $|\xi| \leq \frac{1}{2}$. We introduce 
$\psi(\xi) = \varphi(\xi/2) - \varphi(\xi)$. Hence, $\psi \in \ssmooth$
and is supported in the annulus $\{ \frac{1}{2} \leq |\xi| \leq 2 \}$. We will
denote $\dot{\Delta}_j$ and $\dot{S}_j$ the convolution operators with symbols
$\psi(2^{-j}\xi)$ and $\varphi(2^{-j}\xi)$.

\begin{definition}[Homogeneous Besov space] \label{def.besov}
For $\alpha\in\R$, $1 \leq p, q \leq \infty$, the homogeneous Besov space
$\dot{B}^{\alpha,q}_p$ is defined by the finiteness of the norm
(with standard modification for $q = \infty$):
\begin{equation} \label{eq.def.besov}
 \left\| f \right\|_{\dot{B}^{\alpha,q}_p}
  = \left( \sum_{j\in\Z} 2^{\alpha q j} \left\| \dot{\Delta}_j f \right\|_p^q \right)^{1/q}.
\end{equation}
\end{definition}

\begin{definition}[Homogeneous Triebel-Lizorkin space] \label{def.triebel}
For $\alpha\in\R$, $1 \leq p, q < \infty$, the homogeneous Triebel-Lizorkin space
$\dot{F}^{\alpha,q}_p$ is defined by the finiteness of the norm:
\begin{equation} \label{eq.def.triebel}
 \left\| f \right\|_{\dot{F}^{\alpha,q}_p}
  = \left\| \left( 
      \sum_{j\in\Z} 2^{\alpha q j} | \dot{\Delta}_j f |^q
    \right)^{1/q} \right\|_p.
\end{equation}
\end{definition}

Frazier and Jawerth introduced \emph{atoms} and \emph{molecules} both in the
context of Besov spaces (\cite{MR808825}) and Triebel-Lizorkin spaces 
(\cite{MR942271} and~\cite{MR1070037}). They proved that the norms on these 
spaces are then translated into sequential norms on the sequence of coefficients
of the decomposition. A linear operator will be continuous between two 
Triebel-Lizorkin spaces if and only if it maps smooth atoms of the first to 
smooth molecules of the second. The following definitions are borrowed 
from~\cite{MR1048075}. For simplicity, we restrict them to the case 
$1 \leq p, q \leq + \infty$.

\begin{definition}[Smooth atom] \label{def.atom}
Let $\alpha \in \R$ and $Q$ be a dyadic cube in $\R^n$ of side length $\ell_Q$.
A smooth $\alpha$-atom, associated with the cube $Q$ is a function 
$a \in \dsmooth$ satisfying:
\begin{gather}
  \supp (a) \subset 3 Q, \label{atom.i} \\
  \int x^\gamma a(x) \dx = 0, \quad
    \forall |\gamma|\leq \max\{0, [-\alpha]\}, \label{atom.ii} \\
  \left| \partial^\gamma_x a(x) \right| 
    \leq \ell_Q^{-|\gamma|}, \quad
    \forall |\gamma| \leq \max\{0, [\alpha]\} + 1. \label{atom.iii}
\end{gather}
\end{definition}
In condition~\eqref{atom.i}, $3Q$ denotes the cube with same center as $Q$ but 
a tripled side length. It is worth to be noted that multiple normalization 
choices are possible for condition~\eqref{atom.iii}. We choose to only include
the decay corresponding to the smoothness of the atom. This choice only impacts 
the formula to compute the size of a function from its decomposition on atoms.
We have the following representation theorem:

\begin{lemma}[Theorem 5.11, \cite{MR1107300}] \label{lemma.fjw.511}
Let $\alpha \in \R$, $1 \leq p, q < \infty$. Let $f \in \dot{F}^{\alpha,q}_p$.
There exists a sequence of reals $(s_Q)_{Q \in \mathcal{Q}}$ indexed by the
set $\mathcal{Q}$ of dyadic cubes of $\R^n$ and a sequence of atoms 
$(a_Q)_{Q \in \mathcal{Q}}$ such that $f = \sum_{Q} s_Q a_Q$. Moreover, there
exists a constant $C$ independent on $f$ such that:
\begin{equation} \label{eq.fjw.511}
  \left\| \left( 
    \sum_{Q} \ell_Q^{-\alpha q} |s_Q|^q |\chi_Q(x)|^q 
  \right)^{1/q} \right\|_p \leq C \left\| f \right\|_{\dot{F}^{\alpha,q}_p}.
\end{equation}
\end{lemma}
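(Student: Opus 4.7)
The plan is to realize the atomic decomposition via a discrete Calderón reproducing formula and then control the size of the coefficient sequence by a vector-valued maximal function argument. I would first pick a Calderón pair $(\varphi,\psi)\in\ssmooth\times\ssmooth$ whose Fourier transforms are supported in the annulus $\{1/2\leq|\xi|\leq 2\}$ and satisfy $\sum_{j\in\Z}\hat\varphi(2^{-j}\xi)\hat\psi(2^{-j}\xi)=1$ for $\xi\neq 0$. Writing $\varphi_j(x)=2^{jn}\varphi(2^j x)$ and similarly for $\psi_j$, this yields the continuous reproducing formula $f=\sum_{j\in\Z}\varphi_j*\psi_j*f$, convergent in $\dot{F}^{\alpha,q}_p$ modulo polynomials.

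Next I would discretize this sum. Since $\psi_j*f$ has Fourier support in a ball of radius $\lesssim 2^j$, a Plancherel-Polya-type sampling theorem lets us recover it from its values on the lattice $2^{-j}\Z^n$. Indexing by the family $\mathcal{Q}$ of dyadic cubes, with $Q_{j,k}$ the cube of side $\ell_Q=2^{-j}$ and center $x_Q=2^{-j}k$, one obtains an exact identity of the form $f=\sum_Q s_Q\,a_Q$ with $s_Q$ proportional to $\ell_Q^{n/2-\alpha}\,(\psi_j*f)(x_Q)$ and $a_Q$ an appropriately translated and rescaled version of $\varphi$. Because $\hat\varphi$ vanishes near the origin, $a_Q$ automatically enjoys arbitrary vanishing moments, yielding~\eqref{atom.ii}; the support condition~\eqref{atom.i} is ensured by truncating and using cutoff modifications of the sampling functions, and the pointwise derivative bounds~\eqref{atom.iii} follow from the Schwartz regularity of $\varphi$ after the $\ell_Q^{-|\gamma|}$ rescaling.

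To prove the norm estimate~\eqref{eq.fjw.511}, I would compare the sampled values to a Hardy-Littlewood maximal function. For band-limited functions such as $\psi_j*f$ at frequency scale $2^j$, one has the Peetre-type pointwise domination $|(\psi_j*f)(x_Q)|\lesssim M(\psi_j*f)(x)$ uniformly for $x\in Q$ of side $\ell_Q=2^{-j}$. Since the cubes at fixed scale tile $\R^n$, summing cube by cube reduces the left-hand side of~\eqref{eq.fjw.511} to $\bigl\|(\sum_j 2^{\alpha q j}|M(\psi_j*f)|^q)^{1/q}\bigr\|_p$. Applying the Fefferman-Stein vector-valued maximal inequality (valid for $1<p,q<\infty$) and then the Littlewood-Paley characterization $\|f\|_{\dot{F}^{\alpha,q}_p}\simeq \|(\sum_j 2^{\alpha qj}|\psi_j*f|^q)^{1/q}\|_p$ finishes the proof in the generic range.

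The main obstacle is twofold. First, one must carry out the sampling step with enough care that the resulting functions $a_Q$ are genuine smooth $\alpha$-atoms in the sense of Definition~\ref{def.atom}, in particular with compact support in $3Q$; this typically requires replacing the plain sampling kernel by a smoothly windowed variant and absorbing the error into a geometric iteration. Second, the endpoint cases $p=1$ or $q=1$ lie outside the range of the Fefferman-Stein inequality, and must be handled by exploiting the vanishing moments~\eqref{atom.ii} through a Calderón-Zygmund style off-diagonal estimate or by substituting the sharp maximal function for $M$.
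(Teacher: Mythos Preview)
The paper does not prove this lemma at all: it is quoted verbatim as Theorem~5.11 of the cited reference~\cite{MR1107300} (Frazier--Jawerth--Weiss), with no accompanying proof in the appendix. So there is no ``paper's own proof'' to compare your proposal against; the author simply imports the result and uses it as a black box inside the proof of Lemma~\ref{lemma.continuity.tk}.

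That said, your sketch is essentially the standard Frazier--Jawerth argument that underlies the cited theorem: a Calder\'on reproducing formula, discretization by sampling on dyadic lattices, and control of the coefficient sequence via the Fefferman--Stein vector-valued maximal inequality. You have also correctly flagged the two genuine technical issues---getting honest compactly supported atoms (Definition~\ref{def.atom} requires $\supp(a_Q)\subset 3Q$, not just rapid decay, so one does need the iterated windowing/local reproducing formula rather than a naive Schwartz kernel) and the endpoints $p=1$ or $q=1$ where Fefferman--Stein is unavailable. Your proposal is a fair outline of the proof in the cited source, but since the paper itself offers nothing to compare with, there is no discrepancy to report.
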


The reciprocal inequality to~\eqref{eq.fjw.511} is true even for a wider class
of functions, the class of molecules.

\begin{definition}[Smooth molecule] \label{def.molecule}
Let $\alpha \in \R$, $M > n$ and $\alpha - [\alpha] < \delta \leq 1$. Let $Q$ 
be a dyadic cube in $\R^n$ of side length $\ell_Q$ and center $x_Q$. A 
$(\delta, M)$ smooth $\alpha$-molecule associated with $Q$ is a function $m$ 
satisfying:
\begin{gather}
  \left| m(x) \right| \leq \left( 1 + \ell_Q^{-1} 
    \left| x - x_Q \right| \right)^{-\max\{M, M-\alpha\}}, \label{molecule.i} \\
  \int x^\gamma m(x) \dx = 0, \quad
    \forall |\gamma| \leq [-\alpha], \label{molecule.ii} \\
  \left| \partial^\gamma_x m(x) \right| \leq
    \ell_Q^{-|\gamma|}
    \left( 1 + \ell_Q^{-1} \left| x - x_Q \right| \right)^{-M}, \quad
    \forall |\gamma| \leq [\alpha], \label{molecule.iii} \\
   \left| \partial^\gamma_x m(x) -  \partial^\gamma_x m(x') \right| \leq
     \ell_Q^{-|\gamma|-\delta} \left| x - x' \right|^\delta
     \sup_{|z| \leq |x-x'|} \left( 1 + 
     \ell_Q^{-1} \left| z - (x - x_Q) \right| \right)^{-M},
     \quad \forall |\gamma| = [\alpha]. \label{molecule.iv}
\end{gather}
\end{definition}
In the definition of a molecule, conditions~\eqref{molecule.iii} 
and~\eqref{molecule.iv} are void by convention if $\alpha < 0$.
When $\alpha \geq 0$, condition~\eqref{molecule.iii} implies~\eqref{molecule.i}.
When $\alpha > 0$, condition~\eqref{molecule.ii} is void. 
We have:

\begin{lemma}[Theorem 5.18, \cite{MR1107300}] \label{lemma.fjw.518}
Let $\alpha \in \R$, $M > n$ and $\alpha - [\alpha] < \delta \leq 1$.
Consider a sequence of reals $(s_Q)_{Q \in \mathcal{Q}}$ indexed by the
set $\mathcal{Q}$ of dyadic cubes of $\R^n$ and a sequence of $(\delta, M)$
smooth $\alpha$-molecules $(m_Q)_{Q \in \mathcal{Q}}$. 
Let $f = \sum_{Q} s_Q m_Q$. There exists a constant $C$ independent on $f$ 
such that:
\begin{equation} \label{eq.fjw.518}
  \left\| f \right\|_{\dot{F}^{\alpha,q}_p}
  \leq C  
  \left\| \left( 
    \sum_{Q} \ell_Q^{-\alpha q} |s_Q|^q |\chi_Q(x)|^q 
  \right)^{1/q} \right\|_p .
\end{equation}
\end{lemma}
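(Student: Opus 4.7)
The natural route is the Frazier-Jawerth approach, based on the Littlewood-Paley characterization of $\dot{F}^{\alpha,q}_p$ combined with almost-orthogonality estimates between each dyadic piece $\dot{\Delta}_j$ and each molecule $m_Q$. The first step is, for a molecule $m_Q$ attached to a cube of side length $\ell_Q = 2^{-\nu}$ and center $x_Q$, to establish a pointwise bound of the form
\begin{equation*}
 |\dot{\Delta}_j m_Q(x)|
 \leq C \, 2^{-|j-\nu|\varepsilon}
 \bigl(1 + 2^{\min(j,\nu)}|x - x_Q|\bigr)^{-M},
\end{equation*}
for some $\varepsilon > 0$. When $j \geq \nu$ (probing scale finer than the molecule), I would Taylor-expand $m_Q$ about a base point, use the fact that the convolution kernel generating $\dot{\Delta}_j$ has all moments vanishing to discard the leading terms, and control the remainder with the Hölder condition~\eqref{molecule.iv}; this produces a factor $2^{-(j-\nu)([\alpha]+\delta)}$. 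When $j \leq \nu$ (probing scale coarser than the molecule), I would Taylor-expand the kernel about $x_Q$ and cancel the first $[-\alpha]$ terms by the moment condition~\eqref{molecule.ii}, producing a factor $2^{-(\nu-j)([-\alpha]+1)}$. The hypothesis $\alpha - [\alpha] < \delta$ guarantees that both exponents are strictly positive, so the two regimes fit together into a uniform geometric decay in $|j-\nu|$.

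The second step is to recombine this estimate across all cubes. Writing
\begin{equation*}
 \dot{\Delta}_j f(x)
 = \sum_{\nu \in \Z} \sum_{Q : \ell_Q = 2^{-\nu}} s_Q \, \dot{\Delta}_j m_Q(x),
\end{equation*}
I would observe that, thanks to $M > n$, the inner sum at a fixed scale $\nu$ is dominated pointwise by a Hardy-Littlewood-type maximal function $\mathcal{M}_r$ applied to the sequence $(|s_Q|\chi_Q)_Q$ at scale $\nu$, for any exponent $r$ satisfying $n/r < M$. Fixing such an $r$ strictly below $\min(p,q)$ and then summing the remaining geometric series in $\nu$ (a discrete convolution against $2^{-|j-\nu|\varepsilon}$) yields a pointwise estimate of the form
\begin{equation*}
 2^{j\alpha} |\dot{\Delta}_j f(x)|
 \lesssim (\mathcal{M}_r h_j)(x),
\end{equation*}
where $h_j$ is the $j$-th component of a vector-valued sequence whose $L^p(\ell^q)$ norm reproduces exactly the right-hand side of~\eqref{eq.fjw.518}.

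The final step is to take the $\ell^q$ norm in $j$ followed by the $L^p$ norm in $x$ on both sides, and to invoke the Fefferman-Stein vector-valued maximal inequality on $L^p(\ell^q)$, whose range of validity is precisely $r < \min(p,q)$. This converts the maximal bound into the announced sequential norm and closes the estimate. I expect the main obstacle to be the almost-orthogonality estimate of the first step: one has to extract simultaneously a strictly positive geometric decay in $|j-\nu|$ and a spatial tail with a power $M$ strictly greater than $n$, and the tight interplay between $M$, $\delta$, $\alpha$ and the structural conditions~\eqref{molecule.i}-\eqref{molecule.iv} is where all the hypotheses of the lemma are used in a sharp fashion.
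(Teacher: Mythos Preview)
The paper does not prove this lemma at all: it is quoted verbatim as Theorem~5.18 of the cited reference~\cite{MR1107300} (Frazier--Jawerth--Weiss), and used as a black box in the appendix. So there is no ``paper's own proof'' to compare against.

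That said, your sketch is precisely the Frazier--Jawerth argument one finds in that reference: the pointwise almost-orthogonality estimate for $\dot{\Delta}_j m_Q$ with geometric decay in $|j-\nu|$, domination of the spatial sum at each scale by a Hardy--Littlewood maximal function with exponent $r<\min(p,q)$ (using $M>n$), and finally the Fefferman--Stein vector-valued maximal inequality. The structure is correct and the identification of where the hypotheses $\alpha-[\alpha]<\delta$ and $M>n$ enter is accurate. One small caveat: your case split in the first step is written as if both the smoothness condition~\eqref{molecule.iv} and the moment condition~\eqref{molecule.ii} are always available, but by the conventions in Definition~\ref{def.molecule} the former is void when $\alpha<0$ and the latter is void when $\alpha>0$; in those situations the missing decay factor comes instead from the extra power in the size bound~\eqref{molecule.i} (note the $\max\{M,M-\alpha\}$) or from a direct $L^1$--$L^\infty$ estimate on the convolution. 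This does not change the strategy, only which of the molecule properties you invoke in each regime.
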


\subsection{Circumventing the $T(1) = 0$ condition}

When dealing with singular integral operators, difficulties arise when 
$T(1) \neq 0$. Most regularity results involve some smoothness
condition on $T(1)$ (see, for example the early paper~\cite{MR763911}).
To circumvent this difficulty when handling weakly singular integral operators,
we will write $T_K = \tilde{T}_K + \pi$ where $\tilde{T}_K$ satisfies the
same regularity estimates as $T_K$ but is such that $\tilde{T}_K(1) = 0$
and $\pi$ is defined as a paraproduct, for which we can get direct smoothing
estimates in the appropriate spaces. For two functions $f,g$, we introduce the 
following paraproduct $\pi$, inspired by ideas of J.-M. Bony (see the seminal
work~\cite{MR631751}, the nice introduction to paraproducts~\cite{MR2682821} for
a quick overview or \cite[Section 2.6.1]{MR2768550} for a complete detailed 
presentation):
\begin{equation} \label{def.paraproduct}
 \pi_g(f) = \sum_{j \in \Z} \dot{\Delta}_j(g) \dot{S}_{j-2}(f). 
\end{equation}

\begin{lemma}[Lemma 4, \cite{MR1397491}] \label{lemma.t1}
Let $0 < s < \delta \leq 1$ and $T_K \in \wsio$. Then,
$T_K(1) \in \dot{B}^{s,\infty}_\infty$. Moreover, there exists $C = C(s,\delta)$
such that: $\|T_K(1)\|_{\dot{B}^{s,\infty}_\infty} \leq C \kappa(T_K)$ where 
$\kappa(T_K)$ is the constant associated to $T_K$ in Definition~\ref{def.wsio}.
\end{lemma}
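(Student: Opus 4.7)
The plan is to bound $\|T_K(1)\|_{\dot{B}^{s,\infty}_\infty}$ directly from the Besov definition: I would show that $\sup_{j\in\Z} 2^{js}\|\dot{\Delta}_j(T_K(1))\|_\infty \lesssim \kappa(T_K)$. A preliminary step is to give meaning to $T_K(1)$ as an element of $\sprime$ modulo constants. Since $1\notin\dsmooth$, I would define $\langle T_K(1),\phi\rangle$ for $\phi\in\ssmooth$ with $\int\phi=0$ by inserting the cancellation into the formal double integral $\int\!\int K(x,y)\phi(x)\,dx\,dy$: the moment condition combined with the smoothness estimate~\eqref{wsio.iii} ensures that the inner integral decays like $|y|^{-n-\delta+s}$ at infinity, which is integrable in $y$ precisely because $s<\delta$.

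With this in place, for $x\in\R^n$ and $j\in\Z$ I would express $\dot{\Delta}_j(T_K(1))(x) = \langle T_K(1),\widetilde{\psi}_{j,x}\rangle$, where $\widetilde{\psi}_{j,x}(z) = 2^{jn}\check{\psi}(2^j(x-z))$ is a smooth, $L^1$-normalized, mean-zero bump essentially supported in the ball of radius $\sim 2^{-j}$ centered at $x$. Using $\int\widetilde{\psi}_{j,x}=0$ to insert $-K(x,y)$ under the integral, this rewrites as
\begin{equation*}
\dot{\Delta}_j(T_K(1))(x) = \int\!\!\int \bigl(K(z,y)-K(x,y)\bigr)\,\widetilde{\psi}_{j,x}(z)\,dz\,dy,
\end{equation*}
which I would split depending on whether $|y-x|\leq 2\cdot 2^{-j}$ (near region) or $|y-x|>2\cdot 2^{-j}$ (far region).

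In the near region I discard the cancellation and apply~\eqref{wsio.i} separately to $K(z,y)$ and $K(x,y)$; integrating first in $y$ over a ball of radius $\sim 2^{-j}$ produces a factor of order $(2^{-j})^{s}$, and integrating in $z$ against $|\widetilde{\psi}_{j,x}|$ (of unit $L^1$ mass) yields a contribution of order $\kappa\cdot 2^{-js}$. In the far region, for $z$ in the essential support of $\widetilde{\psi}_{j,x}$ the smallness condition $|z-x|\leq\tfrac{1}{2}|x-y|$ holds, so~\eqref{wsio.ii} applies and gives $|K(z,y)-K(x,y)|\lesssim\kappa|z-x|^{\delta}|x-y|^{-n+s-\delta}$. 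The strict inequality $s<\delta$ is essential here: it makes the $y$-integral of $|x-y|^{-n+s-\delta}$ over the far region convergent and of order $(2^{-j})^{s-\delta}$, which combined with the factor $|z-x|^{\delta}\lesssim 2^{-j\delta}$ again produces the bound $\kappa\cdot 2^{-js}$.

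Combining the two contributions yields $\|\dot{\Delta}_j(T_K(1))\|_\infty\leq C(s,\delta)\,\kappa(T_K)\cdot 2^{-js}$ uniformly in $j$, which is exactly the required Besov estimate. The hard part will not be these kernel calculations, which are routine once the right splitting is set up; rather, it is the preliminary rigorous justification that $T_K(1)$ is a well-defined distribution modulo constants, together with the careful handling of the far-region tail integrals. Both obstacles ultimately hinge on the strict inequality $s<\delta$ assumed in the statement, which guarantees that the critical tail integrals converge and that the factor $\kappa(T_K)$ controls the entire estimate.
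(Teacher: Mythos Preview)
The paper does not supply its own proof of this lemma: it is quoted verbatim as ``Lemma~4'' from Youssfi~\cite{MR1397491} and used as a black box in the appendix. There is therefore no in-paper argument to compare against.

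That said, your sketch is correct and is essentially the standard proof (and the one Youssfi gives). The key ingredients are exactly the ones you identify: interpreting $T_K(1)$ as a distribution modulo constants by testing against mean-zero Schwartz functions, exploiting the mean-zero property of the Littlewood--Paley pieces to subtract $K(x,y)$, and then splitting into near and far regions in $y$. The near region uses only the size bound~\eqref{wsio.i}, while the far region uses the H\"older regularity~\eqref{wsio.ii} and the strict inequality $s<\delta$ to make the tail integrable. One minor point worth tightening when you write this out in full: since $\check\psi$ is Schwartz but not compactly supported, the ``essential support'' argument in the far region needs the usual decomposition into dyadic annuli $|z-x|\sim 2^{-j+k}$ for $k\geq 0$, using the rapid decay of $\widetilde\psi_{j,x}$ to sum over $k$; this is routine but should be made explicit rather than hidden behind ``essentially supported''.
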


\begin{lemma}[Remark 2, \cite{MR1397491}] \label{lemma.paraprod.continuity}
Let $1 \leq p, q < \infty$, $t < 0$ and $s \in \R$. There exists 
$C = C(p, q, t, s)$ such that, for any $b \in \dot{B}^{s,\infty}_\infty$, 
$\pi_b$ is continuous from $\dot{F}^{t,q}_p$ to $\dot{F}^{t+s,q}_p$ and the
following estimate holds:
\begin{equation} \label{eq.lemma.paraprod.continuity}
 \forall f \in \dot{F}^{t,q}_p,
  \left\| \pi_b(f) \right\|_{\dot{F}^{t+s,q}_p}
  \leq C \left\| b \right\|_{\dot{B}^{s,\infty}_\infty} 
    \left\| f \right\|_{\dot{F}^{t,q}_p}.
\end{equation}
\end{lemma}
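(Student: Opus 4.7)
The plan is to exploit the frequency localization intrinsic to the paraproduct and then to close via a vector-valued maximal function inequality. The crucial observation is that each summand $\dot{\Delta}_j(b)\,\dot{S}_{j-2}(f)$ in $\pi_b(f)$ has Fourier spectrum contained in an annulus of size $\sim 2^j$: indeed $\dot{\Delta}_j(b)$ is spectrally supported in $\{2^{j-1}\leq|\xi|\leq 2^{j+1}\}$ while $\dot{S}_{j-2}(f)$ is supported in $\{|\xi|\leq 2^{j-1}\}$, so their product is supported in a fixed dilate of that annulus. Hence there exists a fixed integer $N_0$ such that
\begin{equation}
\dot{\Delta}_k(\pi_b(f)) = \sum_{|j-k|\leq N_0} \dot{\Delta}_k\bigl(\dot{\Delta}_j(b)\,\dot{S}_{j-2}(f)\bigr),
\end{equation}
which confines the $k$--$j$ interaction to a narrow diagonal.

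Next I would bound each block pointwise. Since $\dot{\Delta}_k$ is convolution with a rescaled Schwartz function, $|\dot{\Delta}_k(g)(x)|\lesssim \mathcal{M}(g)(x)$, where $\mathcal{M}$ denotes the Hardy-Littlewood maximal operator. Combining this with the bound $\|\dot{\Delta}_j(b)\|_\infty\lesssim 2^{-sj}\|b\|_{\dot{B}^{s,\infty}_\infty}$, which comes directly from the definition of the Besov norm, together with $\dot{S}_{j-2}(f)=\sum_{\ell\leq j-3}\dot{\Delta}_\ell(f)$ and subadditivity of $\mathcal{M}$, I would obtain
\begin{equation}
2^{(t+s)k}\bigl|\dot{\Delta}_k(\pi_b(f))(x)\bigr| \lesssim \|b\|_{\dot{B}^{s,\infty}_\infty}\sum_{|j-k|\leq N_0}\sum_{\ell\leq j-3} 2^{t(j-\ell)}\,v_\ell(x),
\end{equation}
with $v_\ell(x):=2^{t\ell}\mathcal{M}(\dot{\Delta}_\ell f)(x)$. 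Crucially, because $t<0$, the kernel against which $v$ is convolved decays geometrically on both tails in $k-\ell$ and therefore lies in $\ell^1(\Z)$.

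The final step is to take the $\ell^q$ norm in $k$, apply Young's discrete convolution inequality, and then take the $L^p$ norm in $x$; this reduces the matter to
\begin{equation}
\bigl\|\|2^{t\ell}\mathcal{M}(\dot{\Delta}_\ell f)\|_{\ell^q_\ell}\bigr\|_{L^p_x}\lesssim \|f\|_{\dot{F}^{t,q}_p},
\end{equation}
which is exactly the Fefferman-Stein vector-valued maximal inequality. This settles the range $1<p<\infty$, $1<q<\infty$. The main obstacle is handling the endpoint cases $p=1$ or $q=1$ included in the statement, where Fefferman-Stein is not available. I would circumvent these by invoking the atomic and molecular machinery of Lemmas~\ref{lemma.fjw.511} and~\ref{lemma.fjw.518}: the frequency localization above, combined with the size and cancellation conditions on a smooth $t$-atom $a_Q$, allows one to check directly that $\pi_b(a_Q)$ is, up to a multiplicative constant proportional to $\|b\|_{\dot{B}^{s,\infty}_\infty}$, a smooth $(\delta, M)$ $(t+s)$-molecule attached to the same dyadic cube $Q$; Lemma~\ref{lemma.fjw.518} applied to the resulting molecular expansion of $\pi_b(f)$ then delivers the endpoint continuity.
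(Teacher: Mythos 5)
The paper does not actually prove this lemma; it is stated with the citation ``Remark~2, \cite{MR1397491}'' and then used as a black box in the proof of Lemma~\ref{lemma.continuity.tk}. Your proposal therefore supplies a genuine self-contained argument, and its core is the standard textbook proof of paraproduct continuity: the spectral confinement $\dot{\Delta}_k(\pi_b(f)) = \sum_{|j-k|\leq N_0}\dot{\Delta}_k(\dot{\Delta}_j(b)\dot{S}_{j-2}(f))$, the pointwise bound $|\dot{\Delta}_k g| \lesssim \mathcal{M}g$, the extraction of the Besov norm via $\|\dot{\Delta}_j b\|_\infty \lesssim 2^{-sj}\|b\|_{\dot{B}^{s,\infty}_\infty}$, the fact that $t<0$ makes the resulting one-sided geometric sum $\ell^1$ in $k-\ell$, and the Fefferman--Stein vector-valued maximal inequality. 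This is correct and complete for $1<p<\infty$, $1<q<\infty$, and indeed for arbitrary $s\in\R$ (only $t<0$ is used for summability). So for the range $p,q>1$ your route is both elementary and more informative than the paper's appeal to the literature.

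The endpoint cases $p=1$ or $q=1$, however, are only sketched, and the assertion that $\pi_b(a_Q)$ is a smooth $(t+s)$-molecule is not obviously true as stated. Two concrete obstacles must be addressed. First, the Hölder condition~\eqref{molecule.iv} requires $\delta$-smoothness of the image, but the factor $\dot{\Delta}_j(b)$ contributes only $s$-Hölder regularity in $x$; you therefore cannot take the molecule exponent equal to the $\delta$ of Lemma~\ref{lemma.wsio}, and must instead choose it in the window $(t+s)-[t+s]<\delta_{\rm mol}\leq s$, which is nonempty precisely because $t<0$ — this choice needs to be made explicit and justified. Second, the atom-to-molecule mechanism used in the paper's proof of Lemma~\ref{lemma.continuity.tk} relies essentially on the reduction to $\tilde{T}_K$ with $\tilde{T}_K(1)=0$, whereas $\pi_b(1)=b\neq 0$; the decay and (when $t+s<0$) cancellation of $\pi_b(a_Q)$ therefore rest on a different mechanism — the spectral localization of each block away from the origin together with the vanishing moments of $a_Q$ — which is plausible but must be verified, and is delicate for $s<0$ where $\|\dot{\Delta}_j b\|_\infty$ grows as $j\to-\infty$. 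As written, the endpoint claim is a gap. That said, the paper ultimately applies this lemma only with $p=q=2$, so the gap is harmless for the downstream argument.
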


\begin{lemma}[Lemma 2, \cite{MR1397491}] \label{lemma.pi.b.wsio}
Let $0 < s < 1$ and $0 < \delta \leq 1$. Take $b \in \dot{B}^{s,\infty}_\infty$.
Then, the operator $\pi_b \in \wsio$. Moreover, there exists a constant $C(s)$ 
independent of $b$ such that, 
$\kappa(\pi_b) \leq C(s) \|b\|_{\dot{B}^{s,\infty}_\infty}$, 
where $\kappa(\pi_b)$ is the constant in Definition~\ref{def.wsio}
associated to the operator $\pi_b$.
\end{lemma}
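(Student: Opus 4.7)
My plan is to identify $\pi_b$ as an integral operator with an explicit kernel and then verify conditions~\eqref{wsio.i}--\eqref{wsio.iii} by summing frequency-localized estimates. Writing $\dot{S}_{j-2}$ as convolution with $\eta_{j-2}(z) := 2^{(j-2)n}\eta(2^{j-2}z)$, where $\eta \in \mathcal{S}(\R^n)$ is the inverse Fourier transform of $\varphi$, and denoting $b_j := \dot{\Delta}_j b$, the definition of $\pi_b$ from~\eqref{def.paraproduct} yields
\begin{equation*}
 \pi_b(f)(x) = \int K(x,y) f(y) \dy, \qquad K(x,y) = \sum_{j \in \Z} b_j(x)\, \eta_{j-2}(x-y).
\end{equation*}
Two sets of frequency-localized estimates drive the argument. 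The definition of the Besov norm gives $\|b_j\|_\infty \leq 2^{-js}\|b\|_{\dot{B}^{s,\infty}_\infty}$, while Bernstein's inequality furnishes $\|\nabla b_j\|_\infty \lesssim 2^{j(1-s)}\|b\|_{\dot{B}^{s,\infty}_\infty}$; interpolating between these via $\min(2, 2^j|x'-x|) \leq 2(2^j|x'-x|)^\delta$ for $\delta \in [0,1]$ yields the uniform $\delta$-Hölder control $|b_j(x')-b_j(x)| \lesssim \|b\|_{\dot{B}^{s,\infty}_\infty}\,2^{j(\delta-s)}|x'-x|^\delta$. On the kernel side, the Schwartz property of $\eta$ gives fast-decay bounds $|\eta_{j-2}(z)| \lesssim 2^{jn}(1+2^j|z|)^{-N}$ and $|\nabla \eta_{j-2}(z)| \lesssim 2^{j(n+1)}(1+2^j|z|)^{-N}$ for every $N$, and the same interpolation produces an analogous $\delta$-Hölder estimate for $\eta_{j-2}$.

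To verify the three conditions I fix $x \neq y$, pick $j_0 \in \Z$ with $2^{-j_0} \sim |x-y|$, and split every sum over $j$ at $j_0$. For~\eqref{wsio.i}, inserting $\|b_j\|_\infty$ and the decay of $\eta_{j-2}$ reduces the bound on $|K(x,y)|$ to evaluating $\sum_j 2^{j(n-s)}(1+2^j|x-y|)^{-N}$: low frequencies $j \leq j_0$ form a geometric sum saturating at $2^{j_0(n-s)} = |x-y|^{-n+s}$, while for $N > n-s$ the high-frequency tail is controlled by the same order. For~\eqref{wsio.ii}, decomposing
\begin{equation*}
 K(x',y)-K(x,y) = \sum_j [b_j(x')-b_j(x)]\,\eta_{j-2}(x'-y) + \sum_j b_j(x)\,[\eta_{j-2}(x'-y) - \eta_{j-2}(x-y)],
\end{equation*}
the hypothesis $|x'-x|\leq \tfrac{1}{2}|x-y|$ makes the arguments $x'-y$ and $x-y$ comparable, so each summand is dominated by $\|b\|_{\dot{B}^{s,\infty}_\infty}\,2^{j(n-s+\delta)}|x'-x|^\delta(1+2^j|x-y|)^{-N}$, and the same splitting at $j_0$ delivers $|x'-x|^\delta|x-y|^{-n+s-\delta}$. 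Condition~\eqref{wsio.iii} is entirely analogous; only the $\eta_{j-2}$ piece contributes since $b_j(x)$ does not depend on $y$.

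The principal technical point is the Hölder interpolation that extracts an exponent $\delta$ uniformly across all frequency scales, rather than only at the two endpoints $\delta = 0$ (trivial boundedness) and $\delta = 1$ (gradient bound). Once this step is executed, the three estimates reduce to the geometric-series splitting at $j_0$ just described, and the resulting constant $C(s)$ depends only on $n$, $s$, $\delta$ and the fixed Littlewood--Paley profiles $\varphi, \psi$, not on $b$. The same estimates simultaneously ensure that the defining series for $K$ converges, that $K$ is continuous on $\Omega$, and that the pointwise representation in Definition~\ref{def.wsio} is valid on $\mathcal{D}(\R^n)$, so that indeed $\pi_b \in \wsio$ with $\kappa(\pi_b) \lesssim \|b\|_{\dot{B}^{s,\infty}_\infty}$.
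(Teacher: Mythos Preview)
The paper does not supply its own proof of this lemma: it is quoted verbatim as Lemma~2 of Youssfi~\cite{MR1397491} and used as a black box in the proof of Lemma~\ref{lemma.continuity.tk}. Your argument is therefore not competing with anything in the text; it is filling in a cited result.

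Your proof is correct and is precisely the standard one. Writing the kernel as $K(x,y)=\sum_j b_j(x)\,\eta_{j-2}(x-y)$, using $\|b_j\|_\infty\le 2^{-js}\|b\|_{\dot B^{s,\infty}_\infty}$ together with Bernstein's inequality, interpolating to extract the $\delta$-H\"older increment, and then splitting the dyadic sum at $2^{-j_0}\sim|x-y|$ is exactly how one verifies conditions~\eqref{wsio.i}--\eqref{wsio.iii}. The only point worth making explicit is the uniform decay factor in the second term of the decomposition for~\eqref{wsio.ii}: on the segment joining $x-y$ to $x'-y$ every point has norm at least $\tfrac12|x-y|$ by the hypothesis $|x'-x|\le\tfrac12|x-y|$, so the supremum of $|\nabla\eta_{j-2}|$ over that segment is controlled by $2^{j(n+1)}(1+2^j|x-y|)^{-N}$; you state the conclusion but might spell out this step. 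The resulting constant indeed depends on $n$, $s$, $\delta$ and the fixed profiles $\varphi,\psi$ (the statement's notation $C(s)$ is a mild abuse), and the absolute convergence of the series on $\Omega$ follows from the same geometric-sum estimate, so the pointwise representation~\eqref{tk.pointwise} is justified.
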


Combining these lemmas allows us to circumvent the $T(1) = 0$ condition. Indeed:
\begin{lemma} \label{lemma.continuity.tk}
Let $0 < s < \delta \leq 1$ and $1 \leq p, q < \infty$. Let $t \in \R$ be such 
that $-s < t < 0$. There exists a constant $C$ such that, for $T_K \in \wsio$,
$T_K$ is continuous from $\dot{F}^{t,q}_p$ into $\dot{F}^{t+s,q}_p$ and we 
have:
\begin{equation} \label{eq.continuity.tk}
  \forall f \in \dot{F}^{t,q}_p, \quad
  \left\| T_K(f) \right\|_{\dot{F}^{t+s,q}_p} 
  \leq C \kappa(T_K) \left\| f \right\|_{\dot{F}^{t,q}_p}, 
\end{equation}
where $\kappa(T_K)$ is the constant associated to $T_K$ in 
Definition~\ref{def.wsio}.
\end{lemma}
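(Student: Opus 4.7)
The plan is to reduce to the case $T_K(1) = 0$ by subtracting a paraproduct, then apply the atomic-to-molecular framework for Triebel--Lizorkin spaces. I would split the argument into three stages.

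\textbf{Paraproduct reduction.} First I would invoke Lemma~\ref{lemma.t1} to obtain $b := T_K(1) \in \dot{B}^{s,\infty}_\infty$ with $\|b\|_{\dot{B}^{s,\infty}_\infty} \lesssim \kappa(T_K)$, and write $T_K = \tilde{T}_K + \pi_b$ with $\tilde{T}_K := T_K - \pi_b$. Lemma~\ref{lemma.paraprod.continuity} applied to this particular $b$ disposes of the paraproduct piece, giving $\|\pi_b(f)\|_{\dot{F}^{t+s,q}_p} \lesssim \kappa(T_K) \|f\|_{\dot{F}^{t,q}_p}$. Lemma~\ref{lemma.pi.b.wsio} ensures $\pi_b \in \wsio$ with $\kappa(\pi_b) \lesssim \kappa(T_K)$, so by subtraction $\tilde{T}_K \in \wsio$ with $\kappa(\tilde{T}_K) \lesssim \kappa(T_K)$, and by construction $\tilde{T}_K(1) = 0$ in the appropriate weak sense.

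\textbf{Atoms to molecules for $\tilde{T}_K$.} Next I would decompose $f \in \dot{F}^{t,q}_p$ as $f = \sum_Q s_Q a_Q$ with smooth $t$-atoms via Lemma~\ref{lemma.fjw.511}. The central claim is that, for suitable parameters $\delta' \in (t+s, \delta]$ and $M > n$ dictated by the kernel estimates, the rescaled image
\begin{equation}
m_Q := \ell_Q^{-s}\, \kappa(\tilde{T}_K)^{-1}\, \tilde{T}_K(a_Q)
\end{equation}
is a $(\delta', M)$-smooth $(t+s)$-molecule associated to $Q$, up to a universal multiplicative constant. Granted this, since $\tilde{T}_K(f) = \sum_Q \bigl( s_Q \ell_Q^s \kappa(\tilde{T}_K) \bigr) m_Q$, the rescaling factor $\ell_Q^{-(t+s)} \cdot \ell_Q^s = \ell_Q^{-t}$ exactly matches the atomic normalization of~\eqref{eq.fjw.511}, and Lemma~\ref{lemma.fjw.518} followed by Lemma~\ref{lemma.fjw.511} yields
\begin{equation}
\|\tilde{T}_K(f)\|_{\dot{F}^{t+s,q}_p} \lesssim \kappa(\tilde{T}_K) \left\| \left( \sum_Q \ell_Q^{-tq} |s_Q|^q |\chi_Q|^q \right)^{\!1/q} \right\|_p \lesssim \kappa(\tilde{T}_K) \|f\|_{\dot{F}^{t,q}_p}.
\end{equation}
Combined with the paraproduct bound, this gives~\eqref{eq.continuity.tk}.

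\textbf{Verifying the molecule conditions is the main technical obstacle.} Since $-s < t < 0$ and $0 < s < 1$, one has $[-t] = [t+s] = 0$, so atoms satisfy $\int a_Q = 0$ via~\eqref{atom.ii}, condition~\eqref{molecule.ii} is vacuous, and~\eqref{molecule.iii},~\eqref{molecule.iv} reduce to the $|\gamma| = 0$ case. For $x$ within a bounded dilate of $Q$, the pointwise bound~\eqref{wsio.i} and the atomic support combine to yield $|\tilde{T}_K(a_Q)(x)| \lesssim \kappa(\tilde{T}_K)\, \ell_Q^s$, exactly the factor absorbed by the $\ell_Q^{-s}$ rescaling. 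For $x$ far from $x_Q$, I would use $\int a_Q = 0$ to write
\begin{equation}
\tilde{T}_K(a_Q)(x) = \int \bigl( K(x,y) - K(x, x_Q) \bigr)\, a_Q(y)\, dy,
\end{equation}
then apply~\eqref{wsio.iii} to gain a factor $\ell_Q^{\delta}|x-x_Q|^{-n+s-\delta}$; because $\delta > s$, this decays strictly faster than $|x-x_Q|^{-n}$, giving the required molecular decay with any $M < n + \delta - s$. The Hölder condition~\eqref{molecule.iv} is obtained in parallel from~\eqref{wsio.ii}, splitting into a near and far regime relative to the increment $|x - x'|$. The identity $\tilde{T}_K(1) = 0$ is what rigorously justifies the constant subtraction $K(x, x_Q)$ (a surviving $T_K(1)$ term would otherwise spoil the decay estimate), which is why the paraproduct decomposition is essential rather than merely convenient.
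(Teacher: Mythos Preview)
Your proposal is correct and follows essentially the same route as the paper: reduce to $\tilde{T}_K(1)=0$ via the paraproduct subtraction using Lemmas~\ref{lemma.t1}, \ref{lemma.paraprod.continuity}, \ref{lemma.pi.b.wsio}, then show $\tilde{T}_K$ sends smooth $t$-atoms to (rescaled) smooth $(t+s)$-molecules and conclude by Lemmas~\ref{lemma.fjw.511} and~\ref{lemma.fjw.518}. One small imprecision: in your far-field size estimate the subtraction $K(x,y)-K(x,x_Q)$ is justified by the atom cancellation $\int a_Q=0$, not by $\tilde{T}_K(1)=0$; the latter condition enters more subtly (and the paper likewise only asserts it is ``essential'' while deferring the detailed molecule verification to~\cite{MR1048075} and~\cite{MR1397491}).
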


\begin{proof}
Let $T_K \in \wsio$. Thanks to Lemma~\ref{lemma.t1}, 
$T_K(1) \in \dot{B}^{s,\infty}_\infty$ and 
$\|T_K(1)\|_{\dot{B}^{s,\infty}_\infty} \lesssim \kappa(T_K)$.
Thanks to Lemma~\ref{lemma.pi.b.wsio}, $\pi_{T_K(1)} \in \wsio$ and
$\kappa(\pi_{T_K(1)}) \lesssim \kappa(T_K)$. Hence, we can define
$\tilde{T}_K := T_K - \pi_{T_K(1)}$ and $\tilde{T}_K \in \wsio$, with
a constant $\kappa(\tilde{T}_K) \lesssim \kappa(T_K)$. Moreover, since
$\pi_{b}(1) = b$ for any $b$, $\tilde{T}_K(1) = 0$. Thanks to 
Lemma~\ref{lemma.paraprod.continuity}, proving the continuity of 
$\tilde{T}_K$ is sufficient to obtain~\eqref{eq.continuity.tk}.

Let $a_Q$ be a smooth $t$-atom. We consider $m_Q = \tilde{T}_K(a_Q)$. The next 
step is to prove that $m_Q$ is almost a $(\delta, M)$ smooth $(t+s)$-molecule, 
with $M = n + s - \delta > n$. As noted above, since $t+s > 0$, we only need to 
check~\eqref{molecule.iii} and~\eqref{molecule.iv}. Indeed, lengthy computations
and the essential condition $\tilde{T}_K(1) = 0$ provide the existence of a 
constant $D$ independent on the atom $a_Q$ such that:
\begin{gather}
 \left| m_Q(x) \right| \leq D \ell_Q^s
    \left( 1 + \ell_Q^{-1} \left| x - x_Q \right| \right)^{-M}, \label{m.iii} \\
 \left| m_Q(x) -  m_Q(x') \right| \leq D \ell_Q^s
     \ell_Q^{-\delta} \left| x - x' \right|^\delta
     \sup_{|z| \leq |x-x'|} \left( 1 + 
     \ell_Q^{-1} \left| z - (x - x_Q) \right| \right)^{-M}.
      \label{m.iv}
\end{gather}
Hence $\tilde{m}_Q := D^{-1}\ell_Q^{-s} m_Q$ is a molecule.
For examples of proof techniques to prove~\eqref{m.iii} and~\eqref{m.iv}, we
refer the reader to~\cite{MR1048075} and~\cite{MR1397491}. To conclude the 
proof, we use Lemma~\ref{lemma.fjw.511} and~\ref{lemma.fjw.518}. For 
$f \in \dot{F}^{t,q}_p$, we write $f(x) = \sum_Q s_Q a_Q(x)$ and each 
$\tilde{m}_Q = D^{-1}\ell_Q^{-s} T_K(a_Q)$ is a molecule. Thus, thanks to 
Lemma~\ref{lemma.fjw.511} and Lemma~\ref{lemma.fjw.518},
\begin{equation} \label{eq.cont.tk.1}
 \begin{split}
  \left\| T_K(f) \right\|_{\dot{F}^{t+s,q}_p}
  & = \left\| \sum_Q (D \ell_Q^s s_Q) \cdot m_Q(x) \right\|_{\dot{F}^{t+s,q}_p} \\
  & \lesssim \left\| \left( 
    \sum_{Q} \ell_Q^{-(t+s) q} D^q \ell_Q^{sq} |s_Q|^q |\chi_Q(x)|^q 
  \right)^{1/q} \right\|_p \\
  & \lesssim \left\| \left( 
    \sum_{Q} \ell_Q^{-tq} |s_Q|^q |\chi_Q(x)|^q 
  \right)^{1/q} \right\|_p \\
  & \lesssim \left\| f \right\|_{\dot{F}^{t,q}_p}.
 \end{split}
\end{equation}
Equation~\eqref{eq.cont.tk.1} concludes the proof.
\end{proof}

Triebel-Lizorkin spaces offer a natural framework for atomic and molecular 
decompositions. Of course, setting $p = q = 2$ in the results above also yields
results for the more classical homogeneous Sobolev spaces $\dot{H}^\alpha$. 
Thus, Lemma~\ref{lemma.continuity.tk} tells us that operators of $\wsio$
continuously map $\dot{H}^t$ into $\dot{H}^{t+s}$ for $-s < t < 0$. 
In particular, this is valid for $s = 1/2$ and $t = -1/4$.

\subsection{Kernels defined on bounded domains}

Most results involving singular integral operators concern kernels defined on
the full space $\R^n \times \R^n$. Here, for finite time controllability, we 
need to adapt these results to a setting where the kernels are defined on 
squares, eg. $[0,1]\times[0,1]$. Atoms and molecules are localized functions.
Thus, it would be possible to carry on the same proof as above for bounded
domains, providing that the analogs of the representation 
lemmas~\ref{lemma.fjw.511} and ~\ref{lemma.fjw.518} exist for 
Triebel-Lizorkin spaces on bounded domains. In this paragraph, we give another
approach, which consists in proving that a kernel defined on a bounded domain
can be extended while satisfying the same estimates.

\begin{lemma} \label{lemma.extension.wsio}
Let $n = 1$, $0 < s < 1$ and $0 < \delta \leq 1$. Consider a kernel~$K$, defined
and continuous on $\Omega_1 = \left\{ (x,y) \in [0,1]^2, x \neq y \right\}$,
satisfying:
\begin{gather} 
  \label{wsio.01.i}
  \left| K(x,y) \right| \leq \kappa \left|x-y\right|^{-1+s}, \\
  \label{wsio.01.ii}
  \left| K(x',y) - K(x,y) \right| \leq \kappa \left|x'-x\right|^\delta
  \left|x-y\right|^{-1+s-\delta}, 
  \quad \textrm{for } \left|x'-x\right| \leq \frac{1}{2}\left|x-y\right|, \\
  \label{wsio.01.iii}
  \left| K(x,y') - K(x,y) \right| \leq \kappa \left|y'-y\right|^\delta
  \left|x-y\right|^{-1+s-\delta}, 
  \quad \textrm{for } \left|y'-y\right| \leq \frac{1}{2}\left|x-y\right|.
\end{gather}
Then there exists a kernel $\bar{K}$ on $\R\times\R$, continuous on
$\Omega$, such that:
\begin{itemize}
\item $\bar{K}$ is an extension of $K$: $\bar{K}\rvert_{\Omega_1} = K$,
\item 
  $\bar{K}$ is a weakly singular integral operator of type $(s,\delta)$
  on $\Omega$,
\item
  $\bar{K}$ is associated a constant $\kappa(\bar{K}) \leq C \kappa(K)$,
  where $C$ is independent of $K, s$ and $\delta$.
\end{itemize}
\end{lemma}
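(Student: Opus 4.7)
My plan is to extend $K$ via a dyadic decomposition along the diagonal, apply Whitney's extension theorem to each dyadic piece, and then reassemble and multiply by a compactly supported smooth cutoff.

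Concretely, I would fix a smooth partition of unity $\{\phi_j\}_{j \geq 0}$ on $(0, +\infty)$ with $\supp(\phi_j) \subset [2^{-j-1}, 2^{-j+1}]$ and $|\phi_j'| \lesssim 2^j$, and decompose $K = \sum_{j \geq 0} K_j$ on $\Omega_1$ via $K_j(x,y) := K(x,y)\phi_j(|x-y|)$. On its support, $K_j$ satisfies $|K_j| \lesssim \kappa \cdot 2^{j(1-s)}$, and the Hölder estimates~\eqref{wsio.01.ii}--\eqref{wsio.01.iii} on $K$ translate into a Hölder seminorm of order $\delta$ bounded by $\kappa \cdot 2^{j(1-s+\delta)}$. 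For each $j$, I would use the classical Whitney extension theorem to extend $K_j$ from its compact support in $\Omega_1$ to a Hölder continuous function $\bar K_j$ on $\R^2$ with the same norm up to a universal constant, and then multiply by a smooth annular cutoff to force the support to lie in the enlarged dyadic annulus $\{2^{-j-2} \leq |x-y| \leq 2^{-j+2}\}$. Finally, let $\chi \in C_c^\infty(\R)$ with $\chi \equiv 1$ on $[0,1]$ and $\supp\chi \subset [-1, 2]$, and set $\bar{K}(x,y) := \chi(x)\chi(y) \sum_{j \geq 0} \bar{K}_j(x,y)$.

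Because only $O(1)$ indices contribute at any point $(x,y) \in \Omega$, the sum is locally finite off the diagonal, so $\bar K$ is continuous on $\Omega$ and agrees with $K$ on $\Omega_1$. The bound~\eqref{wsio.01.i} follows by summing the pointwise bounds on $\bar{K}_j$: at a point with $|x-y| \sim 2^{-j_0}$, only indices $j \in [j_0 - 2, j_0 + 2]$ are active. The Hölder estimates~\eqref{wsio.01.ii}--\eqref{wsio.01.iii} then reduce, for $|x'-x|\leq |x-y|/2$, to the same type of estimate on each $\bar{K}_j$, combined with the Leibniz rule for the cutoff $\chi(x)\chi(y)$.

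\textbf{Main obstacle.} The chief technical difficulty is preserving the dyadic localization $\{|x-y| \sim 2^{-j}\}$ through the extension step: a direct Whitney extension produces a function with uncontrolled support, so without a post-extension annular cutoff the pointwise bound~\eqref{wsio.01.i} would fail in the extended region. The remedy is this very cutoff, but introducing it requires verifying that multiplying by an annular cutoff of scale $2^{-j}$ (with derivative of order $2^j$) multiplies the Hölder seminorm of order $\delta$ by at most $\lesssim 2^{j\delta}$; combined with the pointwise bound $2^{j(1-s)}$ on $\bar{K}_j$, this yields precisely the scaling $2^{j(1-s+\delta)}$ required by~\eqref{wsio.01.ii} and~\eqref{wsio.01.iii}. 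This is a standard paraproduct-type calculation.
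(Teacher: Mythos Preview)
Your approach is correct and takes a genuinely different route from the paper. The paper constructs $\bar K$ by a completely explicit, elementary recipe: inside the diagonal strip $|x-y|<1$ it extends $K$ by sliding boundary values along diagonal lines (e.g.\ $\bar K(x,y)=K(0,y-x)$ for $x\le 0$, $0<y-x<1$), and outside the strip it sets $\bar K(x,y)=K(0,1)\,|x-y|^{-1+s}$ (and symmetrically below the diagonal). The estimates are then checked by a short case analysis using triangle inequalities at the seams between regions. Your construction instead invokes general machinery: a dyadic decomposition in $|x-y|$, a Whitney/McShane $C^{0,\delta}$ extension of each piece, followed by annular and spatial cutoffs. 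This is heavier but more systematic, and would generalise more readily (to higher dimensions, or to domains other than $[0,1]$) where an explicit diagonal-translation formula is unavailable. The paper's version, by contrast, is entirely self-contained, requires no black-box extension theorem, and makes the independence of $C$ from $s$ and $\delta$ transparent.

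Two points you should make explicit when you write this up. First, the hypotheses \eqref{wsio.01.ii}--\eqref{wsio.01.iii} are only \emph{one-variable} H\"older conditions under the restriction $|x'-x|\le \tfrac12|x-y|$; to apply Whitney you need the full two-variable condition $|K_j(p)-K_j(q)|\lesssim \kappa\,2^{j(1-s+\delta)}|p-q|^\delta$ on $E_j$. This follows by splitting $|p-q|$ against the scale $2^{-j}$ (for large increments use the pointwise bound, for small ones chain through an intermediate point in $[0,1]^2$), but it deserves a line. Second, be sure your extension preserves the \emph{sup norm} $\|K_j\|_\infty\lesssim \kappa\,2^{j(1-s)}$ and not just the H\"older seminorm: the raw McShane formula $\inf_{q\in E_j}\bigl(K_j(q)+M|p-q|^\delta\bigr)$ can blow up away from $E_j$, which after the annular cutoff would give the wrong pointwise bound $|x-y|^{-1+s-\delta}$ instead of $|x-y|^{-1+s}$ outside $[0,1]^2$. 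Either use Whitney's partition-of-unity construction (which is a convex combination of values of $K_j$ and hence bounded by $\|K_j\|_\infty$), or truncate the McShane extension at $\pm\|K_j\|_\infty$.
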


\begin{proof}
We start by defining $\bar{K}(x,y)$ on the infinite strip $-1 < y - x < 1$.
For $(x,y) \in \Omega_1$, we set $\bar{K}(x,y) = K(x,y)$. Outside
of the initial square, we extend by continuity the values taken on
the sides of the square and we choose an extension that is constant along 
all diagonal lines. Therefore, we define $\bar{K}(x,y)$ as:
\begin{equation} \label{kbar.strip}
 \begin{split}
   K(1+x-y,1) & \quad \textrm{for} \quad 1 \leq y, \enskip 0 < y - x < 1, \\
   K(0,y-x) & \quad \textrm{for} \quad x \leq 0, \enskip 0 < y - x < 1, \\
   K(1,1+y-x) & \quad \textrm{for} \quad 1 \leq x, \enskip 0 < x - y < 1, \\
   K(x-y,0) & \quad \textrm{for} \quad y \leq 0, \enskip 0 < x - y < 1.
 \end{split}
\end{equation}
Outside of the strip, we set:
\begin{equation} \label{kbar.decay}
 \begin{split}
  \bar{K}(x,y) = K(0,1) |x-y|^{-1+s}, \quad \textrm{for } y-x \geq 1, \\
  \bar{K}(x,y) = K(1,0) |x-y|^{-1+s}, \quad \textrm{for } x-y \geq 1.
 \end{split}
\end{equation}
This completes the definition of $\bar{K}$ on~$\Omega$. By construction, it is
easy to check that $\bar{K}$ is continuous on~$\Omega$. By construction, 
$\bar{K}$ also satisfies~\eqref{wsio.01.i} on $\Omega_1$, on the whole strip
$-1\leq y-x \leq 1$ thanks to~\eqref{kbar.decay} and on the half spaces 
$y-x\geq 1$ and $y-x\leq -1$ thanks to the decay chosen in~\eqref{kbar.decay}.

The Hölder regularity estimates~\eqref{wsio.01.ii} and~\eqref{wsio.01.iii} are a
little tougher. First, note that, by symmetry, one only needs to prove, for
example,~\eqref{wsio.01.ii} on the half place 
$\mathcal{H} = \{(x,y) \in \R \times \R, \quad y-x > 0\}$. We write 
$\mathcal{H} = \tilde{\mathcal{H}} \cup \mathcal{H}_1 \cup \mathcal{H}_{-} 
\cup \mathcal{H}_+$, where:
\begin{equation}
 \begin{split}
  \tilde{\mathcal{H}} & = \{(x,y) \in \mathcal{H}, \quad y-x > 1\}, \\
  \mathcal{H}_1 & = \{(x,y) \in \mathcal{H}, 
  	\quad 0 \leq x \textrm{ and } y \leq 1 \}, \\
  \mathcal{H}_+ & = \{(x,y) \in \mathcal{H}, 
  	\quad y-x \leq 1 \textrm{ and } 1 < y \}, \\
  \mathcal{H}_- & = \{(x,y) \in \mathcal{H}, 
  	\quad y-x \leq 1 \textrm{ and } x < 0 \}.
 \end{split} 
\end{equation}
Let $(x,y) \in \mathcal{H}$ and $(x',y)\in\mathcal{H}$ with 
$|x-x'|\leq \frac{1}{2}|x-y|$. If both points belong to the same subdomain, then
the Hölder regularity estimate in the $x$ direction for $\bar{K}$ is a direct 
consequence either of~\eqref{kbar.decay} on $\tilde{\mathcal{H}}$, 
of~\eqref{kbar.strip} on $\mathcal{H}_\pm$ and of the hypothesis on $K$ on 
$\mathcal{H}_1$. If the two points belong to different subdomains, we use a
triangular inequality involving a point at the boundary separating the two
subdomains. As an example of such a situation, if $x < 0 < x'$ and $y < x + 1$, 
then $(x,y) \in \mathcal{H}_-$ and $(x',y) \in \mathcal{H}_1$. We have:
\begin{equation}
 \begin{split}
  \left| \bar{K}(x,y) - \bar{K}(x',y) \right| 
  & = \left| K(0,y-x) - K(x',y) \right| \\
  & \leq \left|K(0,y-x) - K(0,y)\right| + \left|K(0,y) - K(x',y)\right| \\
  & \leq \kappa |x|^\delta|x-y|^{-1+s-\delta} 
  	+ \kappa |x'|^\delta |x'-y|^{-1+s-\delta} \\
  & \leq 5 \kappa |x-x'|^\delta|x-y|^{-1+s-\delta}.
 \end{split}
\end{equation}
The last inequality comes from the fact that $|x'|, |x| \leq |x-x'|$ and
$|x'-y|^{-1+s-\delta} \leq 4 |x-y|^{-1+s-\delta}$ for 
$|x-x'|\leq \frac{1}{2}|x-y|$. The details of the other situations are left to
the reader.
\end{proof}


\bibliographystyle{plain}
\bibliography{Burgers_Quadratic_Control}


\end{document}